\documentclass[12pt,a4paper,reqno,tbtags]{amsart}

\usepackage[margin=1.1in]{geometry}
\usepackage{amsmath,amssymb,amsthm,mathtools}
\usepackage{microtype}
\usepackage{graphicx}
\usepackage{caption}
\usepackage{subcaption}
\usepackage[hidelinks]{hyperref}
\hypersetup{
 pdftitle={The Y-partition is the optimal three-partition for the disc and the harmonic oscillator},
 pdfauthor={Mikael Sundqvist},
 pdfsubject={Spectral minimal partitions and radial transplantation},
 pdfkeywords={spectral partitions, disc, harmonic oscillator, radial transplantation}}

\numberwithin{equation}{section}
\newtheorem{theorem}{Theorem}[section]
\newtheorem{proposition}[theorem]{Proposition}
\newtheorem{lemma}[theorem]{Lemma}
\newtheorem{corollary}[theorem]{Corollary}
\theoremstyle{definition}

\theoremstyle{remark}
\newtheorem{remark}[theorem]{Remark}

\newcommand{\R}{\mathbb{R}}
\newcommand{\Disk}{B}
\newcommand{\Sphere}{\mathbb{S}^{2}}
\newcommand{\Circle}{\mathbb{S}^{1}}
\newcommand{\dd}{\mathop{}\!{d}}
\newcommand{\QB}{Q_{\Disk}}
\newcommand{\QS}{Q_{\Sphere}}
\newcommand{\RB}{\mathcal{R}_{\Disk}}
\newcommand{\RS}{\mathcal{R}_{\Sphere}}
\newcommand{\partenergy}{\mathfrak{L}}
\newcommand{\sector}{\Sigma}
\newcommand{\QH}{\mathcal Q_H}
\newcommand{\QHo}{Q_H}
\newcommand{\RH}{\mathcal R_H}
\newcommand{\DefH}{\mathcal D_H}
\newcommand{\TransH}{T_H}
\newcommand{\LH}{\partenergy_3(H)}
\DeclareMathOperator{\supp}{supp}
\DeclareMathOperator{\Trace}{Tr}

\DeclarePairedDelimiter{\measure}{\lvert}{\rvert}
\DeclarePairedDelimiter{\abs}{\lvert}{\rvert}
\DeclarePairedDelimiter{\norm}{\lVert}{\rVert}

\DeclarePairedDelimiter{\set}{\lbrace}{\rbrace}

\title
  [Spectral three-partitions]
  {The Y-partition is the optimal\\
   three-partition for the disc\\
   and the harmonic oscillator}

\author{Mikael Sundqvist}

\address
  [M. Sundqvist]
  {Department of Mathematics, Lund University, Sweden}

\email{mikael.persson\_sundqvist@math.lth.se}

\keywords{Spectral minimal partitions; Y-partition; Dirichlet
Laplacian; harmonic oscillator; radial transplantation}

\subjclass[2020]{Primary 35P15; Secondary 49Q10, 35J05, 35J10, 33C10}

\begin{document}

\begin{abstract}
We prove that the Y-partition into three equal sectors is the minimal
spectral three-partition both for the Dirichlet Laplacian on the unit disc
and for the planar harmonic oscillator $-\Delta+\abs{x}^2$, with minimal
energies $j_{3/2,1}^{2}$ and $5$; for the disc, this confirms a conjecture
of Helffer and Hoffmann-Ostenhof. The minimizing regular strong partition is
unique up to rotation, and every open minimizing partition has cells with
the Dirichlet form domains of the sectors. The proof is a positive radial
transplantation to the sphere that preserves segregation and matches the
angular-energy measures of the separated model states; the three-lune
theorem of Helffer, Hoffmann-Ostenhof, and Terracini then gives the lower
bound. The transplantation lowers the shifted quadratic form by a
nonnegative defect with strictly positive radial weight; in the equality
case, spherical equipartition makes the defects vanish, which separates
variables, and a Poincar\'e inequality on the circle identifies the sectors.
\end{abstract}

\maketitle

\section{Introduction}

The equal-sector ground states for the Dirichlet Laplacian on the disc
and for the harmonic oscillator on the plane have different radial
factors but the same angular structure. We compare both
problems with the three-lune partition of the sphere. The comparison
matches angular-energy measures rather than area measures, and its
strict radial defect gives uniqueness.
We develop the disc problem first and then apply the same construction
to the oscillator in Section~\ref{ho:sec-oscillator}.

\subsection{The disc problem and the main statements}\label{sec:introduction}
Let $\Disk=\set{x\in\R^2\colon\norm{x}<1}$ be the unit disc. Throughout this
article, all functions are real-valued. For a nonempty open set $D\subset\Disk$,
we define its first Dirichlet eigenvalue by
\[
 \lambda_1(D)=
 \inf_{0\ne u\in H^1_0(D)}
 \frac{\int_D\norm{\nabla u}^2\dd x}
      {\int_Du^2\dd x}.
\]
Here $H^1_0(D)$ is the closure of $C_c^\infty(D)$ in $H^1(D)$.
We always extend functions in this space by zero to $\Disk$, and regard
$H^1_0(D)$ as the corresponding closed subspace
of $H^1_0(\Disk)$. In particular, equalities between Dirichlet form
domains below refer to these zero-extended subspaces.

An open three-partition is a triple $(D_1,D_2,D_3)$ of pairwise disjoint,
nonempty, connected open subsets of $\Disk$. We do not require the cells to
cover the disc. The energy of such a partition and the corresponding minimal
energy are
\begin{equation}\label{eq:partition-energy}
 \Lambda(D_1,D_2,D_3)=\max_{1\le i\le3}\lambda_1(D_i),
 \qquad
 \partenergy_3(\Disk)=\inf_{(D_1,D_2,D_3)}\Lambda(D_1,D_2,D_3).
\end{equation}
We impose no regularity on the competitors in~\eqref{eq:partition-energy}.
A partition is \emph{strong} if the relative closures of its cells cover the
ambient domain. A \emph{regular strong representative} has an interface
consisting of finitely many smooth arcs meeting at finitely many vertices; its
cells are the connected components of the complement, and isolated punctures
are excluded. The infimum over regular strong representatives has the same
value; see~\cite[Theorems~1.12 and~4.14]{HHTnodal}.

Write $j_{\nu,n}$ for the $n$th positive zero of the Bessel function $J_\nu$.
Throughout, we fix the constants
\[
 \nu=\frac32,\qquad \kappa=j_{3/2,1},\qquad
 L=\kappa^2,\qquad \mu=\nu(\nu+1)=\frac{15}{4}.
\]
Numerically, $\kappa\approx4.4934$ and $L\approx20.191$.
The three sectors
\begin{equation}\label{eq:Y-sectors}
 \sector_i(\alpha)=
 \set[\Big]{(r,\phi)\colon 0<r<1,\quad
 \alpha+\frac{2\pi(i-1)}3<\phi<\alpha+\frac{2\pi i}3},
 \qquad i=1,2,3,
\end{equation}
with angles interpreted modulo $2\pi$, form the \emph{Y-partition} of the
disc (Figure~\ref{fig:partitions}(a)). We use this name also for its
rotated copies, for the partition of the plane into three sectors of
opening $2\pi/3$ with vertex at the origin, and for the partition of the
sphere into three equal lunes (Figure~\ref{fig:partitions}(b)), up to
rotation. The planar configuration is also called the \emph{Mercedes
partition} or \emph{Mercedes star}; see~\cite[Section~1]{HHTsphere}
and~\cite{DiskChapter}.

\begin{figure}[tbp]
  \centering
  \subcaptionbox
    {}
    [0.4\textwidth]
    {\includegraphics[height=5cm,page=1]{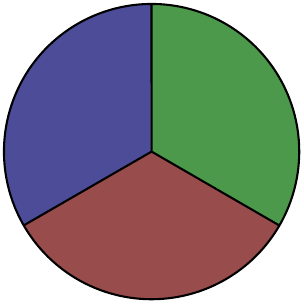}}
  \hskip1cm
  \subcaptionbox
    {}
    [0.4\textwidth]
    {\includegraphics[height=5cm,page=2]{graphics.pdf}}
    \caption{The Y-partitions of (a) the disc and (b) the sphere.}
    \label{fig:partitions}
\end{figure}

\begin{theorem}\label{thm:main}
The minimal three-partition energy of the unit disc is
\[
 \partenergy_3(\Disk)=j_{3/2,1}^{2}.
\]
Every regular strong representative of a minimizing partition is a
Y-partition, up to rotation and relabeling. More generally, for every minimizing
open partition $(D_1,D_2,D_3)$, there exist $\alpha\in\R$ and a relabeling such that
\begin{equation}\label{eq:cell-form-domains}
 D_i\subset\sector_i(\alpha),\qquad
 H^1_0(D_i)=H^1_0\bigl(\sector_i(\alpha)\bigr),
 \qquad i=1,2,3,
\end{equation}
where the spaces are identified by zero extension to $\Disk$. There are
constants $M_i>0$ such that the zero-extended nonnegative ground states of the
cells are given almost everywhere on $\Disk$ by
\begin{equation}\label{eq:sector-groundstates}
 u_i(r,\phi)=
 \begin{cases}
 M_i J_{3/2}(\kappa r)
 \sin\Bigl(\dfrac32(\phi-\alpha_i)\Bigr),
   & (r,\phi)\in\sector_i(\alpha),\\
 0,&\text{otherwise},
 \end{cases}
 \qquad \alpha_i=\alpha+\frac{2\pi(i-1)}3.
\end{equation}
\end{theorem}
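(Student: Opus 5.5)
The plan follows the strategy announced in the abstract: build a positive radial transplantation from the disc to the sphere, push the partition to the sphere, and invoke the three-lune theorem of Helffer, Hoffmann-Ostenhof and Terracini.

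\emph{Upper bound.} On each sector $\sector_i(\alpha)$ of opening $2\pi/3$, separating variables shows that the first Dirichlet eigenfunction is $J_{3/2}(\kappa r)\sin(\tfrac32(\phi-\alpha_i))$, with eigenvalue $\kappa^2=L$. Hence $\partenergy_3(\Disk)\le L$.

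\emph{Lower bound: the transplantation.} Take an open partition $(D_1,D_2,D_3)$ with energy $\Lambda$ and ground states $u_i\in H^1_0(D_i)$. I would define a map $T$ taking functions on $\Disk$ to functions on $\Sphere$ by
\[
 (Tu)(\theta,\phi)=w(\theta)\,u(\rho(\theta),\phi),
\]
keeping the azimuth $\phi$. Here $\rho\colon(0,\pi)\to(0,1)$ is an increasing radial reparametrisation and $w>0$ is a weight. Since $T$ is a pointwise positive multiplication combined with a diffeomorphism in the radial variable, disjoint supports stay disjoint, so the images $TD_i$ form a spherical three-partition.

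The key is to choose $\rho$ and $w$ from the model states so that two things hold. First, the angular-energy measures should match. Concretely, the disc measure $J_{3/2}(\kappa r)^2\,dr/r$, which weights $\int|\partial_\phi u|^2$, should be carried to the sphere measure $\sin^{3}\theta\,d\theta/\sin\theta$ coming from the lune ground state $\sin^{3/2}\theta\,\sin(\tfrac32\phi)$, whose eigenvalue is $\mu=15/4$. The same choice should match the $L^2$ masses. Second, a Picone/ground-state substitution $u=J_{3/2}(\kappa r)\,v$ in the radial direction should give an identity of the form
\[
 \QB(u)-L\|u\|^2=\QS(Tu)-\mu\|Tu\|^2+\int W\,|\partial_r v|^2,
\]
with a defect weight $W>0$ (or, if the radial parts cannot match exactly, an inequality with a nonnegative defect). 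The shifted forms then satisfy $\QS(Tu_i)-\mu\|Tu_i\|^2\le(\Lambda-L)\|u_i\|^2$ after normalisation. Since the three-lune theorem gives $\max_i\lambda_1(TD_i)\ge\mu$, some cell satisfies $\Lambda\ge L$. This yields $\partenergy_3(\Disk)=L$.

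\emph{Main obstacle.} I expect the hard step to be constructing $(\rho,w)$ so that the radial and angular parts match simultaneously and the defect is a genuine nonnegative quadratic form. I would first try to fix $\rho$ by the ODE equating the angular-energy measures, then define $w$ by the mass condition, and finally verify positivity of $W$ using the Bessel equation for $J_{3/2}$ on $(0,1)$. A further point to check is the regularity of $T$ at $r=0,1$ versus the poles, so that $T$ maps $H^1_0(\Disk)$ into $H^1(\Sphere)$.

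\emph{Equality case.} If $\Lambda=L$, the spherical partition is minimal, so by equipartition on the sphere each $\lambda_1(TD_i)=\mu$. All the inequalities are then equalities, so each defect vanishes: $\partial_r v_i=0$, and $u_i=J_{3/2}(\kappa r)\,g_i(\phi)$. The $g_i$ have disjoint supports on $\Circle$ and satisfy $\int g_i'^2\le\tfrac94\int g_i^2$. A Poincaré inequality on arcs (the Dirichlet eigenvalue of an arc of length $\ell$ is $(\pi/\ell)^2$) forces each support to be an arc of length exactly $2\pi/3$ and $g_i=\sin(\tfrac32(\phi-\alpha_i))$. This gives \eqref{eq:sector-groundstates}.

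It then remains to deduce \eqref{eq:cell-form-domains}. Positivity of $u_i$ on $\sector_i(\alpha)$ together with connectedness and disjointness gives $D_i\subset\sector_i(\alpha)$ up to a null set. The sector ground state lies in $H^1_0(D_i)$, so the cell is not smaller in capacity, which should give equality of the form domains. For regular strong representatives the interface must be the three rays, since punctures are excluded.
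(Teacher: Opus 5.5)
Your architecture matches the paper's: angular-energy matching, ground-state substitution, the three-lune theorem, equipartition, and Poincar\'e on arcs. However, the transplantation as you specify it fails, because the weight $w$ cannot be fixed by matching $L^2$ masses. Test your identity on a sector ground state $u=R(r)G(\phi)$, with $R(r)=J_{3/2}(\kappa r)$ and $G$ the sine profile on an arc of length $2\pi/3$. The shifted disc form vanishes and $\partial_r(u/R)=0$. So $Tu$ is a nonnegative element of the lune's form domain with quotient $\mu$, hence a multiple of the lune ground state $S(\theta)G(\phi)$, where $S=\sin^{3/2}\theta$. This forces $w(\theta)=b\,S(\theta)/R(\rho(\theta))$, i.e.\ $T$ must send $R$ to a multiple of $S$. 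Only then does your angular matching make sense: $R^2\dd r/r$ weights $\abs{\partial_\phi(u/R)}^2$, not $\abs{\partial_\phi u}^2$, and it is matched with the weight of $\abs{\partial_\phi(Tu/S)}^2$. Take $\Theta=\rho^{-1}$ given by $\sin^2\Theta\,\Theta'=cR^2/r$ and $b=1$. Then $\norm{Tu}_{L^2(\Sphere)}^2=c\int_{\Disk}\bigl(\sin\Theta(\abs{x})/\abs{x}\bigr)^2u^2\dd x$, and this weight tends to $0$ as $\abs{x}\to1$. So masses cannot be matched, and they need not be, since only the signs of the shifted forms enter.

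The more serious gap is the positivity of the defect weight, which you leave to ``the Bessel equation''. With the correct $T$, $W$ is a positive multiple of $crR^2-\sin^4\Theta/\Theta'=crR^2\bigl(1-\sin^6\Theta/(c^2R^4)\bigr)$. So $W>0$ is equivalent to $cR^2>\sin^3\Theta$, i.e.\ $r\Theta'>\sin\Theta$: the radial length factor of the map strictly exceeds the angular one. This is a global inequality, because $\Theta(r)$ and $c$ are defined through integrals of $R^2/r$. The whole argument rests on it. If $W<0$ on some interval, test functions $R\eta(r)\psi(\phi)$ with $\eta$ supported there violate the comparison, and rigidity needs $W>0$. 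Your proposal gives no mechanism for this.

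The paper's argument compares, at each $r_0$, with the stereographic map $\vartheta_a(r)=2\arctan(ar)$ passing through $(r_0,\Theta(r_0))$. The state $Y_a=S\circ\vartheta_a$ solves the disc radial equation with the strictly decreasing potential $4\mu a^2/(1+a^2r^2)^2$ in place of $L$. Hence the Wronskian $r(R'Y_a-RY_a')$ vanishes at $0$, is negative at $1$, and changes sign at most once, so $R/Y_a$ is unimodal. The balances $\int_0^{r_0}(cR^2-Y_a^2)\dd r/r=0$ and $\int_{r_0}^1(cR^2-Y_a^2)\dd r/r=\int_1^\infty Y_a^2\dd r/r>0$ force the interval $\set{\sqrt c\,R>Y_a}$ to meet both sides of $r_0$, so it contains $r_0$.

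Two smaller points. First, equipartition for arbitrary, non-regular open spherical partitions needs justification; the paper goes through the measurable-support relaxation, the HHT variational inequalities, and unique continuation. Second, $D_i\subset\sector_i(\alpha)$ holds exactly, not just up to a null set. The ground state $u_i$ is continuous and positive on $D_i$ and agrees almost everywhere with the continuous sector function, so it agrees with it everywhere on $D_i$.
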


\begin{remark}\label{rem:form-domains}
Regularity enters Theorem~\ref{thm:main} only in the identification of the
cells as sets. For arbitrary open cells, uniqueness is stated as equality of
the zero-extended spaces $H^1_0(D_i)$ and $H^1_0(\sector_i(\alpha))$. This
preserves all Dirichlet spectral information while allowing modifications
that do not affect the Dirichlet form, such as deleting an interior point
from a cell. Section~\ref{sec:partitions} proves the form-domain
identification and shows that regular strong representatives admit no such
deletions.
\end{remark}

We also use the Rayleigh quotient on the whole disc. For
$0\ne u\in H^1_0(\Disk)$, set
\[
 \RB(u)=\frac{\int_{\Disk}\norm{\nabla u}^2\dd x}{\int_{\Disk}u^2\dd x}.
\]
Three nonnegative nonzero functions are \emph{segregated} if $u_iu_j=0$
almost everywhere whenever $i\ne j$. The zero-extended ground states of a
three-partition therefore form a segregated triple with
$\RB(u_i)=\lambda_1(D_i)$. The optimality assertion of
Theorem~\ref{thm:main} is proved directly in Section~\ref{sec:optimality}.
For uniqueness, we establish the following stronger statement about
segregated Sobolev functions. Its equality case identifies the ground states
of every minimizing open partition, without any prior choice of interfaces.

\begin{theorem}\label{thm:functional}
For every segregated triple $(u_1,u_2,u_3)$ in $H^1_0(\Disk)$,
\begin{equation}\label{eq:functional-bound}
 \max_i\RB(u_i)\ge L.
\end{equation}
Equality holds if and only if, for some $\alpha\in\R$, some constants
$M_i>0$, and some relabeling, the $u_i$ are the sector ground states
\eqref{eq:sector-groundstates}, where all equalities of functions are
understood almost everywhere.
\end{theorem}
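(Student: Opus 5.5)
We plan to prove Theorem~\ref{thm:functional} by a positive radial transplantation from the disc to the sphere, followed by the three-lune theorem of Helffer, Hoffmann-Ostenhof, and Terracini: for every segregated triple on $\Sphere$ one has $\max_i\RS(v_i)\ge 15/4=\mu$, with equality only for the Y-partition of the sphere into equal lunes. In polar coordinates $(r,\phi)$ on $\Disk$ write $g(r)=J_{3/2}(\kappa r)>0$ on $(0,1)$, the radial factor of the model sector state. For a given $u\in H^1_0(\Disk)$ we set $u=g\,w$. The ground-state substitution (integration by parts against the radial Bessel equation $g''+g'/r+(\kappa^2-\nu^2/r^2)g=0$) gives
\[
 \int_\Disk\norm{\nabla u}^2-L\int_\Disk u^2
 =\int_0^1\!\!\int_0^{2\pi}g^2\Bigl(w_r^2+\frac{w_\phi^2}{r^2}-\frac{\nu^2}{r^2}w^2\Bigr)r\dd\phi\dd r .
\]
We then choose a monotone change of variables $r\mapsto\theta(r)\in(0,\pi)$, the polar angle on $\Sphere$, together with a positive multiplier $h$, and define $v(\theta,\phi)=h(\theta)w(r,\phi)$. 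These are fixed so that the angular-energy measure $g^2r^{-1}\dd r$ matches the corresponding spherical measure for the lune ground state $\sin^{\nu}\theta\,\sin(\nu\phi)$, namely $\sin^{2\nu}\theta\,(\sin\theta)^{-1}\dd\theta$ after the analogous substitution on $\Sphere$. The map $u\mapsto v$ preserves positivity and supports up to the angular coordinate, so segregation is preserved.

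The key identity we aim for has the form
\[
 \QB(u)-L\norm{u}^2=c\bigl(\QS(v)-\mu\norm{v}^2\bigr)+\int \rho(r)\,w_r^2\,(\ldots),
\]
where the defect term is a nonnegative radial derivative contribution with a strictly positive weight $\rho$. Under this matching the angular terms $w_\phi^2-\nu^2w^2$ coincide exactly. The radial terms differ only by the Jacobian, since the radial parts of the two shifted forms are $\int g^2w_r^2 r\,dr$ and $\int \sin\theta\,\tilde g^2 w_\theta^2\,d\theta$. Setting $\dd\theta/\dd r$ from the angular matching, we check that the disc radial coefficient dominates the sphere one. This pointwise inequality between explicit Bessel and trigonometric expressions is the step we expect to be the main obstacle. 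We would first try to reduce it to a Riccati or Wronskian-type monotonicity of $r g'/g$ versus $\sin\theta\,\tilde g'/\tilde g$, and fall back on interval estimates of $J_{3/2}$, which is elementary: $J_{3/2}(x)\propto \sin x/x^2-\cos x/x$.

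Granting the identity, suppose $\max_i\RB(u_i)\le L$. Then each $v_i$ has $\RS(v_i)\le\mu$. The three-lune theorem gives equality, so each $v_i$ is a multiple of a lune ground state $\sin^\nu\theta\sin(\nu(\phi-\alpha_i))$ restricted to equal lunes. It also forces every defect to vanish, so $w_r=0$ and $w_i=w_i(\phi)$. The vanishing of the shifted form then reads $\int(w_\phi^2-\tfrac94w^2)\dd\phi=0$ on each support arc. Since the three supports partition the circle into arcs whose lengths sum to at most $2\pi$, the Poincar\'e inequality on an arc of length $\ell$ (constant $(\pi/\ell)^2\ge 9/4$ iff $\ell\le 2\pi/3$) forces each arc to have length exactly $2\pi/3$, with $w_i$ a multiple of $\sin(\tfrac32(\phi-\alpha_i))$. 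This yields~\eqref{eq:sector-groundstates}, and the converse direction is a direct computation. Care is needed to justify the substitution for $H^1_0$ functions near $r=0$ and $r=1$, which we handle by density of $C_c^\infty(\Disk\setminus\{0\})$, using Hardy-type control from the positive $\nu^2/r^2$ term.
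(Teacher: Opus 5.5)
Your architecture is the paper's: ground-state factorization by the Bessel state, angular-energy matching, a nonnegative radial defect, the three-lune theorem, and the interval Poincar\'e inequality. There are, however, two genuine gaps.

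\emph{The equality case.} What Helffer, Hoffmann-Ostenhof, and Terracini prove is $\partenergy_3(\Sphere)=\mu$ for open partitions, together with a classification of regular strong minimizers. Applied to your transplanted triple, this gives only $\max_i\RS(v_i)=\mu$; even that needs an approximation step, since the sets $\{v_i>0\}$ are only measurable.
\begin{itemize}
\item It does not say that each $\RS(v_i)$ equals $\mu$.
\item It does not classify arbitrary segregated triples in $H^1(\Sphere)$ that attain the maximum. The functional uniqueness you attribute to it is not part of that theorem, and for measurable supports it would need a regularity theory for relaxed minimizers.
\end{itemize}
Your step ``it also forces every defect to vanish'' requires all three quotients to equal $\mu$. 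Suppose, say, $\RS(v_2)<\mu$. Then the spherical shifted form of $v_2$ is negative, and your identity allows a strictly positive defect for $u_2$ even though the disc shifted form of $u_2$ is $\le0$. So $\partial_r w_2=0$ does not follow. Separation of variables for an index attaining the maximum only gives that index an angular support of length at least $2\pi/3$, which does not close your length count.

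The missing ingredient is spherical saturation: if nonnegative, nonzero, segregated $v_1,v_2,v_3\in H^1(\Sphere)$ satisfy $\RS(v_i)\le\mu$ for every $i$, then all three quotients equal $\mu$. The paper obtains this from equipartition in the measurable-support relaxation:
\begin{itemize}
\item the relaxed optimum is still $\mu$, so $(\{v_i>0\})_i$ is a relaxed minimizer;
\item the variational inequalities of HHT furnish multipliers $a_i\ge0$;
\item unique continuation for spherical eigenfunctions makes every $a_i$ positive;
\item testing the inequalities gives $\mu=\widehat\lambda_1(\{v_i>0\})\le\RS(v_i)\le\mu$.
\end{itemize}
Once saturation is available, no spherical uniqueness is needed.

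\emph{The radial inequality.} You leave unproved the strict inequality $cR^2>\sin^3\Theta$, equivalently $r\Theta'>\sin\Theta$. Both the comparison and the vanishing-defect step rest on it. It is not a comparison between explicit Bessel and trigonometric expressions, because $\Theta$ and $c$ are defined through integrals of $R^2/r$. The paper argues as follows.
\begin{itemize}
\item Fix $r_0$ and compare with the conformal map $\vartheta_a(r)=2\arctan(ar)$, with $a$ chosen so that $\vartheta_a(r_0)=\Theta(r_0)$.
\item The function $Y_a=\sin^{3/2}\vartheta_a$ solves the disc radial equation with the strictly decreasing potential $4\mu a^2/(1+a^2r^2)^2$ in place of $L$. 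A Wronskian argument then makes $R/Y_a$ strictly unimodal.
\item The mass balance for $(cR^2-Y_a^2)/r$ is zero on $(0,r_0)$. It is positive on $(r_0,1)$, because $Y_a$ keeps mass on $(1,\infty)$.
\item These balances force the interval $\{\sqrt c\,R/Y_a>1\}$ to contain $r_0$.
\end{itemize}

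Two smaller points. First, the extension to $H^1_0(\Disk)$ is not a Hardy estimate. In the factorized form the $\nu^2$ term enters with a minus sign, and the separate terms need not be finite. The paper instead bounds $\norm{Tu}_{H^1(\Sphere)}$ on the punctured core using the comparison itself and the boundedness of $\sin\Theta(r)/r$. It then extends by continuity using removability of the center, and passes to the limit in the defect only on compact annuli. Second, the identity $\int(w_\phi^2-\tfrac94w^2)\dd\phi=0$ holds over the whole circle, not on each support arc. The length count therefore has to be run over all components of each support, as in the paper's angular rigidity lemma.
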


\subsection{The planar harmonic oscillator}\label{ho:intro}

The same method applies to the planar harmonic oscillator
\begin{equation}\label{ho:operator}
 H=-\Delta+\abs{x}^2\qquad\text{in }\R^2.
\end{equation}
Its quadratic form and form domain are
\begin{equation}\label{ho:form}
 q_H[u]=\int_{\R^2}\bigl(\abs{\nabla u}^2+\abs{x}^2\abs{u}^2\bigr)\dd x,
 \qquad
 \QH=\{u\in H^1(\R^2): \abs{x}u\in L^2(\R^2)\}.
\end{equation}
We use the norm $\norm{u}_{\QH}^2=q_H[u]+\norm{u}_2^2$. For an open set
$D\subset\R^2$, define
\[
 \QH(D)=\overline{C_c^\infty(D)}^{\QH},
 \qquad
 \lambda_H(D)=\inf_{0\ne u\in\QH(D)}\frac{q_H[u]}{\norm{u}_2^2}.
\]
Functions on $D$ are extended by zero to the plane.

As for the disc, an open three-partition is a triple
$\mathcal P=(D_1,D_2,D_3)$ of pairwise disjoint, nonempty, connected open
sets. The cells need not exhaust the plane. Set
\begin{equation}\label{ho:partition-energy}
 \Lambda_H(\mathcal P)=\max_{1\le i\le3}\lambda_H(D_i),
 \qquad
 \LH=\inf_{\mathcal P}\Lambda_H(\mathcal P).
\end{equation}
Regular strong representatives are understood as in
Section~\ref{sec:introduction}, with unbounded interface arcs allowed.
Write
\begin{equation}\label{ho:equal-sectors}
 \sector_i^\infty(\alpha)=
 \left\{(r,\phi): r>0,\quad
 \alpha+\frac{2\pi(i-1)}3<\phi<\alpha+\frac{2\pi i}3\right\},
 \qquad i=1,2,3,
\end{equation}
with angles interpreted modulo $2\pi$. These sectors form the Y-partition
of the plane, with vertex at the origin.

\begin{theorem}\label{ho:main-partition}
The minimal three-partition energy of the planar harmonic oscillator is
\begin{equation}\label{ho:main-value}
 \LH=5.
\end{equation}
Every regular strong minimizing representative is, up to rotation and
relabeling, the Y-partition~\eqref{ho:equal-sectors}.
More generally, for every minimizing open partition, there are an
$\alpha\in\R$ and a relabeling such that
\begin{equation}\label{ho:cell-form-domains}
 D_i\subset\sector_i^\infty(\alpha),\qquad
 \QH(D_i)=\QH\bigl(\sector_i^\infty(\alpha)\bigr),
 \qquad i=1,2,3,
\end{equation}
where the spaces are identified by zero extension to $\R^2$.
\end{theorem}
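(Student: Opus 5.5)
The plan mirrors the disc argument, with the radial factor $J_{3/2}(\kappa r)$ replaced by the oscillator's radial ground state for angular momentum $3/2$. Separation of variables for $H$ with angular frequency $m=3/2$ on the sector $\sector_1^\infty(0)$ suggests the ground state
\[
 u(r,\phi)=r^{3/2}e^{-r^2/2}\sin\bigl(\tfrac32\phi\bigr).
\]
Its eigenvalue is $2(m+1)=5$. This gives the upper bound $\LH\le5$, once we check that this function lies in $\QH(\sector_1^\infty(0))$ (cut off near infinity and near the vertex) and that it minimizes $\lambda_H$ on the sector. The latter follows by expanding in $\sin(k\tfrac32\phi)$ and using that the lowest radial oscillator eigenvalue with angular momentum $m$ is $2(m+1)$.

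For the lower bound I would prove the oscillator analogue of Theorem~\ref{thm:functional}: for every segregated triple in $\QH$ one has $\max_i q_H[u_i]/\norm{u_i}_2^2\ge5$, with equality only for the sector ground states. I would use the same positive radial transplantation to $\Sphere$, with the weight adapted to the oscillator. Write $u=f(r)\,w$ with $f(r)=r^{3/2}e^{-r^2/2}$. The ground-state substitution (integration by parts in $r$) gives
\[
 q_H[u]-5\norm{u}_2^2=\int f^2\Bigl(\abs{\partial_r w}^2+\frac{\abs{\partial_\phi w}^2-\tfrac94 w^2}{r^2}\Bigr)\,r\dd r\dd\phi .
\]
So the shifted form equals a nonnegative radial defect $\int f^2\abs{\partial_r w}^2 r\dd r\dd\phi$ plus an angular-energy term. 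The angular term has radial weight $f^2/r$, and it is exactly what is needed after transplantation.

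Next I would map $(r,\phi)$ to spherical coordinates $(\theta,\phi)$ by a monotone change of variables $\theta=\Theta(r)$. Choose $\Theta$, together with a positive factor $g(\theta)=\sin^{3/2}\theta$ for the spherical radial ground state of degree $\nu=3/2$ (eigenvalue $\mu=15/4$), so that the transplanted function $v(\theta,\phi)=g(\theta)\,w(\Theta^{-1}(\theta),\phi)$ satisfies two conditions. First, the angular-energy measure $f^2 r^{-1}\dd r$ must match $g^2(\sin\theta)^{-1}\dd\theta$. Second, the spherical radial defect must be dominated by the planar one. Because the transformation is pointwise in $(r,\phi)$ and multiplies by a positive function, it preserves segregation and nonnegativity. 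The result is $\QS(v_i)-\mu\norm{v_i}^2\le c\bigl(q_H[u_i]-5\norm{u_i}^2\bigr)$, with both normalizations matched. The three-lune theorem of Helffer, Hoffmann-Ostenhof and Terracini then says that for a segregated triple on $\Sphere$, $\max_i\RS(v_i)\ge15/4$. If all $q_H[u_i]<5\norm{u_i}^2$, the inequality forces $\RS(v_i)<\mu$ for all $i$, a contradiction. Hence $\LH\ge5$.

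The main obstacle is choosing $\Theta$ so that the measure matching and the defect domination hold simultaneously, with a strictly positive radial weight. I would first solve the measure-matching ODE $\Theta'(r)\,g(\Theta)^2/\sin\Theta=f(r)^2/r$, which fixes $\Theta$ because both total masses are finite. Then I would verify that the $\partial_\theta$ energy of $v$ is bounded by a multiple of $\int f^2\abs{\partial_r w}^2$, using the ground-state identity on the sphere. Density and cutoff issues at $r=0$ and $r=\infty$ also need care for $\QH$ functions.

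For equality, spherical equipartition forces the transplanted triple to be the three lunes, so every radial defect vanishes. Vanishing defect with positive weight means $\partial_r w_i=0$, so each $w_i$ depends only on $\phi$. The inequality $\int\abs{\partial_\phi w}^2\ge\tfrac94\int w^2$ for three segregated functions on $\Circle$, together with its equality case, forces three arcs of length $2\pi/3$ and the sine profiles. The passage to form domains $\QH(D_i)=\QH(\sector_i^\infty(\alpha))$ and to regular strong representatives then follows exactly as in Section~\ref{sec:partitions}.
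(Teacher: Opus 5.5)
Your outline has the paper's architecture: sector upper bound, ground-state substitution with $h=r^{3/2}e^{-r^2/2}$, angular-energy matching onto $\sin^{3/2}\theta$, the three-lune bound, separation of variables, angular rigidity, then cells. But the step you call ``the main obstacle'' is left unproved, and it is the heart of the proof. Your ODE needs the constant $c=2\sqrt\pi$, which is forced by the total masses $\sqrt\pi/4$ and $\pi/2$. Once $\Theta$ is fixed by matching $c\,h^2r^{-1}\dd r$ with $\sin^2\theta\dd\theta$, the transplanted form is
\[
 \QS(Tu)=\int_0^{2\pi}\int_0^\infty\Bigl[\frac{\sin^4\Theta}{\Theta'}\abs{w_r}^2
 +c\,\frac{h^2}{r}\bigl(\abs{w_\phi}^2-\tfrac94w^2\bigr)\Bigr]\dd r\dd\phi .
\]
The angular term has no sign, so you need $\sin^4\Theta/\Theta'\le c\,rh^2$ pointwise, with the same constant $c$ as in the angular matching. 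A bound of the spherical radial energy ``by a multiple'' of $\int h^2\abs{w_r}^2$ does not suffice.

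This pointwise condition is exactly $r\Theta'(r)\ge\sin\Theta(r)$ for all $r>0$. Its strict form is what makes the defect weight $rh^2(1-K^{-2})$ positive at every radius, which you later need for separation of variables. The ground-state identity on the sphere only produces the display above. It says nothing about this inequality, which depends on the particular profile $h$.

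The paper proves it in Lemma~\ref{ho:strict-comparison}. Both radii are parametrized by the common cumulative value $p=F(r)=G(\Theta(r))$. Setting $I_H(p)=rF'(r)$ and $J_H(p)=\sin\theta\,G'(\theta)$ gives $I_H/J_H=r\Theta'/\sin\Theta$. One computes $I_H''=-4r^2/I_H$ and $J_H''=-3\sin^2\theta/J_H$, and shows that $I_H-J_H$ cannot attain a nonpositive interior minimum. Without this or a substitute, neither $\LH\ge5$ nor the equality analysis follows. You would also need the combined defect identity on all of $\QH$, obtained by a Cauchy-sequence argument, because the separate terms of your ground-state formula need not be integrable near $r=0$.

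The equality step also misuses the spherical input. Equipartition does not force the transplanted triple to be the three lune ground states. The HHT classification concerns regular strong partitions, and no uniqueness theorem for arbitrary segregated triples in $H^1(\Sphere)$ is available; nor is one needed. What you need is saturation. From $\max_i\RH(u_i)=5$ you get $\QS(Tu_i)\le c\,\QHo(u_i)\le0$ for every $i$. The measurable-support equipartition theorem, through Proposition~\ref{prop:saturation}, upgrades this to $\QS(Tu_i)=0$ for all three indices. Each chain then collapses and gives $\QHo(u_i)=\DefH(u_i)=0$. The three-lune lower bound alone guarantees this for only one index, leaving the other two defects uncontrolled. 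Even that lower bound for segregated $H^1$ triples needs the approximation argument of Lemma~\ref{lem:spherical-functional}, since HHT is stated for partitions.
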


The ground states of the minimizing sectors have radial factor
$h(r)=r^{3/2}e^{-r^2/2}$ and the same angular factors as
in~\eqref{eq:sector-groundstates}. Section~\ref{ho:sec-oscillator} proves
Theorem~\ref{ho:main-partition} and its functional counterpart,
Theorem~\ref{ho:main-functional}, and gives the scaling, translation, and
Gaussian formulations.

\subsection{The idea of the proof}

We start from the spherical three-partition theorem of Helffer,
Hoffmann-Ostenhof, and Terracini: three equal lunes minimize the energy on the
sphere, with value $\mu=15/4$; see~\cite[Theorem~1.1]{HHTsphere}, restated as
Theorem~\ref{thm:spherical-input} below (see also Figure~\ref{fig:partitions}).
The sector and lune ground states share their angular factor. We seek a positive
linear radial transplantation that preserves this factor and replaces the disc
radial state by the spherical one. Such a map preserves segregation. The desired
comparison is between the quadratic forms shifted by $L$ and $\mu$: a function
with Rayleigh quotient below $L$ on the disc should be mapped to a function with
quotient below $\mu$ on the sphere. Section~\ref{sec:transplantation} constructs
this comparison, and Section~\ref{sec:optimality} applies the spherical theorem
to exclude a three-partition of the disc with energy below $L$.

For the disc uniqueness statement, we analyze equality in the comparison.
Matching the angular energies, we obtain a nonnegative defect that measures
radial variation after factoring out the disc radial state. The strict stretching
inequality of Proposition~\ref{prop:stretching} shows that its radial weight is
positive, and Proposition~\ref{prop:full-comparison} extends the comparison, with
a localized form of the defect estimate, to the full Sobolev spaces.
Section~\ref{sec:sphere-input} proves a saturation statement: if the Rayleigh
quotients of three segregated functions on the sphere are all at most $\mu$, then
they all equal $\mu$. This follows from the equipartition theorem for minimal
spectral partitions, and it shows that all three defects vanish in the equality
case. Vanishing defects force separation of variables, after which the interval
Poincar\'e inequality determines the three angular sectors.
Section~\ref{sec:proofs} completes the rigidity argument and passes from ground
states to cells.

For the oscillator, we replace the Bessel factor $R$ by $h$ and match
its angular-energy measure with the same spherical measure. A comparison
of derivatives of the normalized cumulative densities proves the strict
radial inequality. The resulting transplantation satisfies an exact
defect identity on the oscillator form domain. Spherical saturation makes
all three defects vanish in the equality case, and the same angular
rigidity lemma identifies the Y-partition. Thus the two uniqueness proofs
share their spherical and angular inputs; neither uses spherical
uniqueness.

\subsection{Background}\label{sec:historical-background}
The functional in~\eqref{eq:partition-energy} is the maximum of the cell
eigenvalues. Among the functionals given by the $p$-norms of the vector of
cell eigenvalues, it is the $p=\infty$ member. Minimizing the sum
or a finite-$p$ mean of the eigenvalues gives a different variational
problem; the relation between these functionals is explained in Helffer's
survey and in the comparison by Helffer and
Hoffmann-Ostenhof~\cite{HelfferSurvey,HHnotions}.

The general existence theory for optimal partitions was developed by
Bucur, Buttazzo, and Henrot~\cite{BucurButtazzoHenrot}. Conti, Terracini,
and Verzini related eigenvalue partitions to competing and segregating
states~\cite{CTV03,CTV05fucik,CTV05segregation}. Caffarelli and Lin developed
the associated regularity theory~\cite{CaffarelliLin,CaffarelliLin2008}.
Tavares and Terracini established regularity of the nodal set of segregated
critical configurations under a weak reflection
law~\cite{TavaresTerracini}. A complementary line of work asks when a
spectral partition is generated by the nodal domains of an eigenfunction;
see the work of Ancona, Helffer, and Hoffmann-Ostenhof~\cite{Ancona2004}.

Helffer, Hoffmann-Ostenhof, and Terracini (HHT) established the connection
with Courant-sharp eigenfunctions and the existence of regular strong
representatives~\cite{HHTnodal}. A partition is \emph{bipartite} when its
adjacency graph can be colored with two colors; an eigenvalue $\lambda_k$
is \emph{Courant-sharp} when an associated eigenfunction has exactly $k$
nodal domains. HHT proved that a bipartite minimal partition is nodal, and
that $\partenergy_k=\lambda_k$ exactly when $\lambda_k$ is Courant-sharp.

For the disc, write $\lambda_j=\lambda_j(\Disk)$, counting multiplicity.
The first eigenvalues are
\[
 \lambda_1=j_{0,1}^2,\qquad
 \lambda_2=\lambda_3=j_{1,1}^2,\qquad
 \lambda_4=\lambda_5=j_{2,1}^2.
\]
The eigenspaces corresponding to $\lambda_2=\lambda_3$ and
$\lambda_4=\lambda_5$ produce two half-discs and four quarter-discs,
respectively. Nonradial separated disc eigenfunctions have
an even number of nodal domains, so the first level admitting three nodal
domains is $\lambda_{15}=j_{0,3}^2$, whose eigenfunction is radial with
two nodal circles. The Bessel-zero ordering and the nodal-partition
theorems give
\[
 \lambda_1<\partenergy_2(\Disk)=\lambda_2=\lambda_3
 <\partenergy_3(\Disk)<\lambda_4=\partenergy_4(\Disk).
\]
The ordering of the Bessel zeros and the nodal-partition identities used here are
discussed in~\cite[Section~9 and Appendix~A]{HHTnodal} and
\cite[Section~1.2]{BHdisk}; see also~\cite{HelfferPerssonSundqvist2016} for the
Courant-sharp classification in Euclidean balls. Thus the minimal three-partition
of the disc is not a nodal partition.

The Y-partition conjecture for the disc goes back to unpublished notes by Helffer
and Hoffmann-Ostenhof written in December 2005, cited in~\cite{HHTnodal}. The CRM
chapter~\cite{DiskChapter} formulates the conjecture and develops a double-cover
approach. Under a simple-connectivity hypothesis in the punctured disc, a minimal
three-partition lifts to a symmetric six-partition of the double cover, and a
Courant-sharp eigenvalue on that cover identifies its projection as the
Y-partition.

For a regular strong representative of a minimal three-partition of the disc, the
Euler-formula classification recalled by Bonnaillie-No\"el and Helffer
\cite[Proposition~4.1 and Figure~2]{BHdisk} gives the six boundary-incidence
configurations shown in Figure~\ref{fig:topologicalconfigurations}. Their
one-junction theorem \cite[Proposition~1.4]{BHdisk} shows that a minimizing
partition with exactly one interior junction is the Y-partition, without assuming
the junction's location or any symmetry. It therefore identifies the minimizer in
type~(a) and excludes types~(b) and~(c), but does not eliminate the two-junction
alternatives~(d)--(f). Helffer's survey~\cite{Helffer2011} discusses these
questions together with the rectangle and sphere problems.

\begin{figure}[tbp]
  \centering
  \subcaptionbox
    {}
    [0.3\textwidth]
    {\includegraphics[height=3.5cm,page=3]{graphics.pdf}}
  \hfill
  \subcaptionbox
    {}
    [0.3\textwidth]
    {\includegraphics[height=3.5cm,page=4]{graphics.pdf}}
  \hfill
  \subcaptionbox
    {}
    [0.3\textwidth]
    {\includegraphics[height=3.5cm,page=5]{graphics.pdf}}
  \\[.5cm]
  \subcaptionbox
    {}
    [0.3\textwidth]
    {\includegraphics[height=3.5cm,page=6]{graphics.pdf}}
  \hfill
  \subcaptionbox
    {}
    [0.3\textwidth]
    {\includegraphics[height=3.5cm,page=7]{graphics.pdf}}
  \hfill
  \subcaptionbox
    {}
    [0.3\textwidth]
    {\includegraphics[height=3.5cm,page=8]{graphics.pdf}}
    \caption{Schematic interface configurations in the Euler-formula
    classification~\cite[Proposition~4.1]{BHdisk}. Dots mark interior triple
    junctions. In (a)--(c), the three arcs end on the boundary circle at
    three, two, or one distinct points, respectively. In (d) and (e), the two
    arcs end there at two and one distinct points, respectively. In (f), the
    interface is an entirely interior theta-network.
    The drawings represent connectivity and boundary incidence, not shapes
    optimized for the spectral problem.}
    \label{fig:topologicalconfigurations}
\end{figure}

Numerical investigations support the equal-sector
candidate~\cite{BHV,BogoselBonnaillieNoel2018}.
The Aharonov--Bohm approach was introduced by Bonnaillie-No\"el, Helffer,
and Hoffmann-Ostenhof~\cite{BHHO2009}. In the magnetic formulations of
Noris and Terracini and of Helffer and Hoffmann-Ostenhof, a minimal partition
is the nodal partition of an Aharonov--Bohm eigenfunction~\cite{NorisTerracini,Magnetic}, and the number
and locations of the poles become part of the geometric problem. For a
single moving pole, Bonnaillie-No\"el, Noris, Nys, and Terracini studied
the dependence of the eigenvalues on its position~\cite{BNNNT2014}. The
survey~\cite{BHsurvey} discusses these approaches.

The spherical problem provides the model for our comparison. Bishop
formulated the conjecture that the Y-partition minimizes the mean of the
first eigenvalues of the three cells~\cite{Bishop1992}. HHT proved the
analogous statement for the maximum of the three
eigenvalues~\cite{HHTsphere}, which is the spherical result used here. The
minimizing partition consists of three equal lunes
(Figure~\ref{fig:partitions}(b); Theorem~\ref{thm:spherical-input}).
Where the projection
$(x,y,z)\mapsto(x,y)$ is non-degenerate, the lune pattern projects onto the
sector pattern. This resemblance does not yield an eigenvalue
comparison: the surface metrics and the radial energy weights differ.

Berkolaiko, Kuchment, and Smilansky related the Morse index of the partition
energy on the space of equipartitions to nodal deficiency, under genericity
assumptions~\cite{BerkolaikoKuchmentSmilansky}. Berkolaiko, Canzani, Cox,
and Marzuola proved minimality of the Y-partition within a homological
class that includes partitions whose interface
contains a curve from the center to the boundary~\cite{Homology}. Berkolaiko,
Canzani, Cox, Kuchment, and Marzuola developed a stability theory relating
the Morse index of the Hessian to nodal deficiency, which yields topologically
constrained minimality in the non-bipartite case~\cite{Stability}. Ognibene
and Velichkov established regularity near triple-junction singularities and
proved the spherical three-partition min--max theorem on $\mathbb S^{d-1}$
for every $d\ge3$~\cite[Theorem~1.1 and Corollary~3.6]{OgnibeneVelichkov}.

The harmonic oscillator has a parallel nodal background. Write
$\lambda_j(H)$ for the eigenvalues of $H=-\Delta+\abs{x}^2$ on the whole
plane, counted with multiplicity. The Hermite basis gives
\[
 \lambda_1(H)=2,\qquad
 \lambda_2(H)=\lambda_3(H)=4,\qquad
 \lambda_4(H)=\lambda_5(H)=\lambda_6(H)=6;
\]
more generally, the level $2(n+1)$ has multiplicity $n+1$;
see~\cite[Section~1]{BerardHelfferOscillator}. The corresponding linear
and bilinear Hermite eigenfunctions produce two half-planes and four
quadrants. Leydold's nodal bound~\cite{Leydold1989} implies that the only
Courant-sharp spectral indices are $1$, $2$, and $4$, as recalled
in~\cite[Theorem~1.2 and the following remark]{BerardHelfferOscillator}.
By contrast, Theorem~\ref{ho:main-partition} places the minimal
three-partition energy at $5$, strictly between $\lambda_3(H)$ and
$\lambda_4(H)$. Since $5$ is not a whole-plane eigenvalue, the minimizing
Y-partition is not a nodal partition of an eigenfunction of $H$.
At high energies, Charron proved a Pleijel-type asymptotic bound on nodal
domain counts for harmonic oscillators in arbitrary dimension, including
anisotropic quadratic traps~\cite{Charron2018}.

The Gaussian ground-state transform~\eqref{ho:gaussian-transform}
relates this problem to Gaussian Dirichlet eigenvalues. Ehrhard proved
that half-spaces minimize the first such eigenvalue at fixed Gaussian
measure~\cite[Theorems~4.7 and~4.10]{Ehrhard1984}. The three-cell Gaussian
optimization considered here, without prescribed cell measures, is treated
in Section~\ref{ho:sec-consequences}.

\subsection{Transplantation methods and the present comparison}
\label{sec:transplantation-background}

Transplantation is a classical method for spectral inequalities.
Conformal transplantation underlies the work of P\'olya and
Szeg\H{o}~\cite{PolyaSzego}, Szeg\H{o}~\cite{Szego1954}, and
Hersch~\cite{Hersch1970}. Hersch's harmonic
transplantation~\cite{Hersch1969} uses Green-function levels;
Bandle, Brillard, and Flucher extended it to spaces of constant
curvature~\cite[Definition~13]{BandleBrillardFlucher}.
Radial, non-conformal constructions include the raywise transformations
of P\'olya and Szeg\H{o}~\cite{PolyaSzego} and the constant-Jacobian
maps of Laugesen and Siudeja~\cite{LaugesenSiudeja}.

The transplantation constructed below differs from these methods in the
quantity it matches.
Its radial coordinate is chosen so that the angular-energy measures of the
two separated model states correspond, rather than areas or Green-function
levels, and a positive multiplier replaces the source radial state by the
spherical one. The resulting map is not conformal: by
Proposition~\ref{prop:stretching}, its radial and angular length factors
differ at every interior radius. What is proved is an inequality between
shifted quadratic forms for a single positive linear operator on the
source form domain, together with a nonnegative defect with a strictly
positive radial weight. Because
one operator serves all three cells, segregation is preserved. Both
applications use the ground-state representation and a one-dimensional
monotone rearrangement. The strict comparison is obtained from a
Sturm--Liouville Wronskian identity for the disc and from a differential
comparison of cumulative densities for the oscillator.

\subsection{Organization}

Section~\ref{sec:transplantation} constructs the disc transplantation,
and Section~\ref{sec:optimality} proves optimality.
Sections~\ref{sec:sphere-input} and~\ref{sec:proofs} establish spherical
saturation and disc rigidity. Section~\ref{sec:general-nu} treats general
angular frequencies. Section~\ref{ho:sec-oscillator} gives the oscillator
application and the common conditional radial-profile principle.

\section{A radial transplantation from the disc to the sphere}
\label{sec:transplantation}

In this section we construct the comparison operator from the two model
ground states. We establish both the quadratic-form inequality used for
optimality in Section~\ref{sec:optimality} and the defect information needed
for the uniqueness argument in Sections~\ref{sec:sphere-input}
and~\ref{sec:proofs}. We use polar coordinates $(r,\phi)$ on $\Disk$ and
colatitude--longitude coordinates $(\theta,\phi)$ on $\Sphere$. The corresponding
energy and mass integrals are
\begin{align*}
 \int_{\Disk}\norm{\nabla u}^2\dd x
 &=\int_0^{2\pi}\int_0^1
 \left(r\abs{u_r}^2+\frac1r\abs{u_\phi}^2\right)\dd r\dd\phi ,\\
 \int_{\Disk}u^2\dd x
 &=\int_0^{2\pi}\int_0^1ru^2\dd r\dd\phi ,
\end{align*}
and
\begin{align*}
 \int_{\Sphere}\norm{\nabla_{\Sphere}v}^2\dd A
 &=\int_0^{2\pi}\int_0^\pi
 \Bigl(\sin\theta\abs{v_\theta}^2+
       \frac{\abs{v_\phi}^2}{\sin\theta}\Bigr)\dd\theta\dd\phi ,\\
 \int_{\Sphere}v^2\dd A
 &=\int_0^{2\pi}\int_0^\pi
       v^2\sin\theta\dd\theta\dd\phi .
\end{align*}
For $0\ne v\in H^1(\Sphere)$,
\[
 \RS(v)=\frac{\int_{\Sphere}\norm{\nabla_{\Sphere}v}^2\dd A}
                  {\int_{\Sphere}v^2\dd A}.
\]
No boundary condition is imposed on the sphere. For an open subset
$E\subset\Sphere$, $H^1_0(E)$ denotes the closure of $C_c^\infty(E)$,
with functions extended by zero into $H^1(\Sphere)$, and $\lambda_1(E)$
is the infimum of $\RS$ over $H^1_0(E)\setminus\set{0}$.

The signs of the shifted quadratic forms
\begin{equation}\label{eq:shifted-forms}
 \QB(u)=\int_{\Disk}\bigl(\norm{\nabla u}^2-Lu^2\bigr)\dd x,
 \qquad
 \QS(v)=\int_{\Sphere}
       \bigl(\norm{\nabla_{\Sphere}v}^2-\mu v^2\bigr)\dd A
\end{equation}
are determined by
\[
 \QB(u)=\bigl(\RB(u)-L\bigr)\norm{u}_{L^2(\Disk)}^2,
 \qquad
 \QS(v)=\bigl(\RS(v)-\mu\bigr)\norm{v}_{L^2(\Sphere)}^2.
\]
We seek a map $T$ with $\QS(Tu)\le c\QB(u)$ for some constant $c>0$: a function
with Rayleigh quotient below $L$ on the disc is then sent to a function with
quotient below $\mu$ on the sphere. For the equality case, we also want the
difference $c\QB(u)-\QS(Tu)$ to control radial variation after factoring out the
disc model state. Since only the signs of the shifted forms matter, $T$ need not
preserve the $L^2$ norm.

\subsection{The separated model states}
\label{sec:radial-models}

Define
\[
 R(r)=J_{3/2}(\kappa r),\qquad S(\theta)=\sin^{3/2}\theta.
\]
Using the angular factor $\sin(\nu(\phi-\phi_0))$ in separation of variables,
we obtain the angular equation $-G''=\nu^2G$ and the following radial equations,
respectively, on the disc and the sphere:
\[
 -F_{\Disk}''-\frac1rF_{\Disk}'+\frac{\nu^2}{r^2}F_{\Disk}=\lambda F_{\Disk},
 \qquad
 -F_{\Sphere}''-\cot\theta F_{\Sphere}'
       +\frac{\nu^2}{\sin^2\theta}F_{\Sphere}=\lambda F_{\Sphere}.
\]
The function $R$ solves the first equation with the Dirichlet condition at
$r=1$ when $\lambda=L=\kappa^2$; the function $S$ solves the second when
$\lambda=\mu$:
\begin{align}
 -R''-\frac1rR'+\frac{\nu^2}{r^2}R&=LR,
             &&0<r<1,\label{eq:R-equation}\\
 -S''-\cot\theta S'+\frac{\nu^2}{\sin^2\theta}S&=\mu S,
             &&0<\theta<\pi.\label{eq:S-equation}
\end{align}
For \eqref{eq:S-equation}, we have $S'/S=\nu\cot\theta$ and
$S''/S=\nu^2\cot^2\theta-\nu\csc^2\theta$. Substitution into the
left-hand side gives $\nu(\nu+1)S$. In divergence form, the two equations
are
\begin{equation}\label{eq:divergence-forms}
 -(rR')'+\Bigl(\frac{\nu^2}{r}-Lr\Bigr)R=0,
 \qquad
 -(\sin\theta S')'+\Bigl(\frac{\nu^2}{\sin\theta}-\mu\sin\theta\Bigr)S=0.
\end{equation}

The function $R$ is positive on $(0,1)$ because $\kappa$ is the first
positive zero of $J_{3/2}$ and the leading term of $R$ at the origin is
positive. Its endpoint expansions are
\begin{align}
 R(r)&=\beta r^{3/2}(1+O(r^2)),
 &\beta&=\frac{(\kappa/2)^{3/2}}{\Gamma(5/2)}>0,
 &&r\downarrow0,\label{eq:R-origin}\\
 R(r)&=\gamma(1-r)(1+O(1-r)),
 &\gamma&=-R'(1)>0,
 &&r\uparrow1.\label{eq:R-boundary}
\end{align}
The first expansion follows from the Bessel series; see~\cite[Eq.~10.2.2]{DLMF}.
At $r=1$ the zero is simple: if both $R(1)$ and
$R'(1)$ vanished, uniqueness for the regular second-order equation
\eqref{eq:R-equation} near $1$ would force $R\equiv0$. Positivity on the
left then gives $R'(1)<0$.

On an angular interval of length $\pi/\nu=2\pi/3$, the functions
\[
 R(r)\sin\bigl(\nu(\phi-\phi_0)\bigr)
 \quad\text{and}\quad
 S(\theta)\sin\bigl(\nu(\phi-\phi_0)\bigr)
\]
are positive Dirichlet eigenfunctions on the corresponding disc sector and
spherical lune, with eigenvalues $L$ and $\mu$. They belong to the corresponding
Dirichlet form domains: they are continuous up to the boundary and vanish
there, and their gradients are $O(\rho^{1/2})$ at distance $\rho$ from a
vertex, hence square integrable. A positive Dirichlet eigenfunction of a
connected domain has positive inner product with a nonnegative first
eigenfunction and is therefore itself a first eigenfunction; thus both
functions are ground states. In particular, the Y-partition of the disc
has energy $L$, and the corresponding lune has first eigenvalue $\mu$.

\subsection{The form of a radial transplantation}
\label{sec:what-is-forced}

The sector and lune ground states have the same angular factor. This suggests
retaining the angular variable and changing only the radial coordinate and the
radial multiplier. We therefore look for a smooth increasing bijection
$\Theta\colon(0,1)\to(0,\pi)$ with $\Theta'>0$, together with a smooth
function $m>0$, such that
\begin{equation}\label{eq:ansatz}
 (Tu)(\Theta(r),\phi)=m(r)u(r,\phi).
\end{equation}
We seek a map from $H^1_0(\Disk)$ to $H^1(\Sphere)$ such that, for a
constant $c>0$ independent of $u$,
\begin{equation}\label{eq:target}
 \QS(Tu)\le c\QB(u),\qquad u\in H^1_0(\Disk),
\end{equation}
with equality for the three sector ground states. A positive multiplier and a
common change of coordinates preserve nonnegativity and segregation. The task
is to determine the multiplier $m$ and coordinate change $\Theta$ so that this
comparison holds.

We first derive necessary conditions for such a map. These conditions specify a
candidate; we then prove its mapping and comparison properties.

\subsubsection*{Sharpness determines the multiplier}
Let $G$ be the function equal to $\sin(\nu(\phi-\phi_0))$ on the angular
interval $(\phi_0,\phi_0+2\pi/3)$ and extended by zero to the rest of the
circle. The function $u=RG$ is a sector ground state, and $\QB(u)=0$.
Under \eqref{eq:ansatz}, its image is nonnegative and vanishes outside
the corresponding spherical lune. For any candidate mapping $H^1_0(\Disk)$
into $H^1(\Sphere)$, the image $Tu$ belongs to the lune's Dirichlet
form domain: the lune has Lipschitz boundary, and the zero exterior value
gives zero trace. Since $\QB(u)=0$, equality in~\eqref{eq:target} would
therefore imply that $Tu$ is a first eigenfunction of that lune. By
simplicity of the first eigenvalue,
\[
 m(r)R(r)=bS(\Theta(r)),
\]
for a constant $b>0$. Multiplying $T$ by a constant changes $c$ by the
corresponding squared factor, so we normalize $b=1$. Thus, once $\Theta$ is
chosen, sharpness fixes the multiplier:
\begin{equation}\label{eq:transplant}
 (Tu)(\Theta(r),\phi)=\frac{S(\Theta(r))}{R(r)}u(r,\phi).
\end{equation}
Writing $\Psi(r,\phi)=(\Theta(r),\phi)$, this says that $Tu=Sz$ when
$u=Rw$ and $z\circ\Psi=w$. In particular, the purely radial function
$R$ is sent to $S$. No separation of variables is imposed on a general
input $u$: the quotient $w=u/R$ may depend on both variables.

\subsubsection*{Factoring out the radial states}
We next calculate the shifted forms in terms of $w$ and $z$. For this
calculation, take
$u\in C_c^\infty(\Disk\setminus\set{0})$. Its support stays away from
$r=0$ and $r=1$, and its image stays away from both spherical poles, so
all radial integrations by parts have zero endpoint terms. The cancellation
is the ground-state representation (also called the ground-state
transform); see, for example,~\cite[Lemma~2.4]{PinchoverTintarev}. We record
and prove its one-dimensional form for the coefficients used here.

\begin{lemma}\label{lem:transform}
Let $J$ be an open interval, and let $a$ and $V$ be smooth on $J$, with $a>0$.
Let $\rho>0$ solve $-(a\rho')'+V\rho=0$ there. Then, for every
$w\in C_c^\infty(J)$,
\[
 \int_J\bigl(a\abs{(\rho w)'}^2+V\rho^2w^2\bigr)\dd t
 =\int_Ja\rho^2\abs{w'}^2\dd t.
\]
\end{lemma}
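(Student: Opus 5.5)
We expand the square and integrate the cross term by parts, using that $w$ has compact support in $J$ so no boundary terms appear. Writing $(\rho w)'=\rho'w+\rho w'$, we get
\[
 a\abs{(\rho w)'}^2
 =a\rho'^2w^2+2a\rho\rho'ww'+a\rho^2\abs{w'}^2 .
\]
The last term is already the right-hand side. It therefore suffices to show that the remaining terms, together with $V\rho^2w^2$, integrate to zero.

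For the cross term, we note that $2ww'=(w^2)'$. Integrating by parts, which is legitimate since $w^2\in C_c^\infty(J)$ and $a\rho\rho'$ is smooth on $J$, gives
\[
 \int_J 2a\rho\rho'ww'\dd t=\int_J a\rho\rho'(w^2)'\dd t
 =-\int_J (a\rho\rho')'w^2\dd t .
\]
By the product rule, $(a\rho\rho')'=a\rho'^2+\rho(a\rho')'$. Substituting, the term $a\rho'^2w^2$ cancels, and the remainder is
\[
 \int_J\bigl(-\rho(a\rho')'+V\rho^2\bigr)w^2\dd t
 =\int_J\rho\bigl(-(a\rho')'+V\rho\bigr)w^2\dd t=0,
\]
by the equation satisfied by $\rho$. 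This proves the identity.

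There is no real obstacle here. The only points requiring care are the regularity needed to differentiate $a\rho\rho'$ and the absence of endpoint terms. Both are guaranteed by the smoothness of $a$ and $V$ (so the solution $\rho$ is smooth, by standard ODE theory, or by assumption) and by the compact support of $w$. Positivity of $\rho$ and of $a$ is not used in the identity itself. It matters only afterwards, when the right-hand side is read as a nonnegative weighted Dirichlet energy and when $u=\rho w$ is inverted to $w=u/\rho$.
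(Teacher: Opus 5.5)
Your proof is correct and follows the paper's own argument: expand $a\abs{(\rho w)'}^2$, write the cross term as $a\rho\rho'(w^2)'$, integrate by parts without boundary terms because $w$ has compact support, and use the equation $-(a\rho')'+V\rho=0$ to cancel both the $(\rho')^2$ term and the potential term. Your side remark is also accurate: positivity of $a$ and $\rho$ is not used in the identity itself, only in how it is read and applied afterwards.
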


\begin{proof}
Expand the derivative and integrate the cross term by parts:
\begin{align*}
 a\abs{(\rho w)'}^2
 &=a\rho^2\abs{w'}^2+a(\rho')^2w^2+a\rho\rho'(w^2)',\\
 \int_Ja\rho\rho'(w^2)'\dd t
 &=-\int_J\bigl(a(\rho')^2+\rho(a\rho')'\bigr)w^2\dd t.
\end{align*}
The terms containing $(\rho')^2$ cancel, and
$\rho(a\rho')'=V\rho^2$ cancels the potential term.
\end{proof}

For the disc equation in~\eqref{eq:divergence-forms}, use $a=r$,
$V=\nu^2/r-Lr$, and $\rho=R$. For each fixed $\phi$, the lemma gives
\[
 \int_0^1\bigl(r\abs{\partial_r(Rw)}^2-LrR^2w^2\bigr)\dd r
 =\int_0^1rR^2\abs{w_r}^2\dd r
   -\nu^2\int_0^1\frac{R^2}{r}w^2\dd r.
\]
Adding the angular energy and integrating in $\phi$ gives
\begin{equation}\label{eq:disk-factorization}
 \QB(u)=\int_0^{2\pi}\int_0^1
 \Bigl[rR^2\abs{w_r}^2+
       \frac{R^2}{r}\bigl(\abs{w_\phi}^2-\nu^2w^2\bigr)\Bigr]
 \dd r\dd\phi.
\end{equation}
For the sphere, use $a=\sin\theta$,
$V=\nu^2/\sin\theta-\mu\sin\theta$, and $\rho=S$. The same calculation
gives
\[
 \QS(Sz)=\int_0^{2\pi}\int_0^\pi
 \Bigl[S^2\sin\theta\abs{z_\theta}^2+
       \frac{S^2}{\sin\theta}
       \bigl(\abs{z_\phi}^2-\nu^2z^2\bigr)\Bigr]\dd\theta\dd\phi.
\]
Since $S^2\sin\theta=\sin^4\theta$ and
$S^2/\sin\theta=\sin^2\theta$, the substitution $\theta=\Theta(r)$,
with $z_\theta=w_r/\Theta'$ and $z_\phi=w_\phi$, becomes
\begin{equation}\label{eq:sphere-factorization}
 \QS(Tu)=\int_0^{2\pi}\int_0^1
 \Bigl[\frac{\sin^4\Theta}{\Theta'}\abs{w_r}^2+
       \sin^2\Theta \Theta'
       \bigl(\abs{w_\phi}^2-\nu^2w^2\bigr)\Bigr]\dd r\dd\phi.
\end{equation}
Consequently, for any constant $c$,
\begin{equation}\label{eq:difference-structure}
 c\QB(u)-\QS(Tu)=\int_0^{2\pi}\int_0^1
 \bigl[A(r)\abs{w_r}^2+
       B(r)\bigl(\abs{w_\phi}^2-\nu^2w^2\bigr)\bigr]\dd r\dd\phi,
\end{equation}
where
\begin{equation}\label{eq:comparison-coefficients}
 A=crR^2-\frac{\sin^4\Theta}{\Theta'},
 \qquad B=c\frac{R^2}{r}-\sin^2\Theta \Theta'.
\end{equation}
The radial contribution is nonnegative when $A\ge0$, but the angular
contribution has no fixed sign. We next show that the comparison forces the
coefficient $B$ of the angular term to vanish.

\subsection{Matching the angular energies}
\label{sec:angular-matching}

We first use high angular frequencies to show that $B$ cannot be negative.
To see this, set $w(r,\phi)=\eta(r)\cos(N\phi)$ and $u=Rw$, where
$\eta\in C_c^\infty(0,1)$ and $N$ is a positive integer. The right-hand
side of \eqref{eq:difference-structure} is
\[
 \pi\int_0^1A\abs{\eta'}^2\dd r
 +\pi(N^2-\nu^2)\int_0^1B\eta^2\dd r.
\]
If $B$ were negative somewhere, continuity would allow us to choose
$\eta$ supported where $B<0$. Letting $N\to\infty$ would then
contradict \eqref{eq:target}. Hence every map satisfying \eqref{eq:target} must have
$B\ge0$ on $(0,1)$.

A radial test then shows that the total angular surplus cannot be positive. Set
\[
 I=\int_0^1\frac{R(r)^2}{r}\dd r.
\]
The endpoint expansions \eqref{eq:R-origin}--\eqref{eq:R-boundary}
show that $0<I<\infty$. Since $\Theta$ increases from $0$ to $\pi$,
\begin{equation}\label{eq:angular-surplus}
 \int_0^1B(r)\dd r
 =cI-\int_0^\pi\sin^2\theta\dd\theta
 =cI-\frac\pi2.
\end{equation}
In particular, $B\ge0$ implies $cI\ge\pi/2$.

Now test the desired inequality with the purely radial function $u=R$.
The endpoint estimates show that $R\in H^1_0(\Disk)$ and
$S\in H^1(\Sphere)$, and the prescribed multiplier gives $TR=S$.
Multiplying \eqref{eq:R-equation} by $rR$ and integrating by parts gives
\begin{equation}\label{eq:radial-integrated-identity}
 \int_0^1r(R')^2\dd r-L\int_0^1rR^2\dd r=-\nu^2I.
\end{equation}
Here $[rRR']_0^1=0$: at the origin it is $O(r^3)$, and at the boundary
$R(1)=0$. Similarly, multiplying \eqref{eq:S-equation} by
$S\sin\theta$ gives
\[
 \int_0^\pi\sin\theta (S')^2\dd\theta
 -\mu\int_0^\pi\sin\theta S^2\dd\theta
 =-\nu^2\int_0^\pi\frac{S^2}{\sin\theta}\dd\theta
 =-\nu^2\frac\pi2.
\]
The boundary term vanishes because
$\sin\theta SS'=\nu\sin^{2\nu}\theta\cos\theta\to0$ at both
poles. After integration over the angular variable, the test $u=R$
therefore gives
\[
 0\le c\QB(R)-\QS(S)
   =2\pi\nu^2\left(\frac\pi2-cI\right).
\]
Thus $cI\le\pi/2$ as well. Together with
\eqref{eq:angular-surplus}, we obtain
\[
 c=\frac{\pi}{2I},\qquad \int_0^1B(r)\dd r=0.
\]
Since a continuous nonnegative function with zero integral vanishes
identically, the angular coefficients must match exactly:
\begin{equation}\label{eq:theta-derivative}
 \sin^2\Theta(r) \Theta'(r)=c\frac{R(r)^2}{r},
 \qquad 0<r<1.
\end{equation}

Integrating \eqref{eq:theta-derivative} from $0$ to $r$ gives
\begin{equation}\label{eq:cumulative-map}
 \int_0^{\Theta(r)}\sin^2t\dd t
   =c\int_0^r\frac{R(t)^2}{t}\dd t,
 \qquad c=\frac{\pi}{2I}.
\end{equation}
We now use this equation to \emph{define} $\Theta$, independently of the
comparison. Both cumulative functions are smooth and strictly increasing
on their respective intervals, with the same endpoint values $0$
and $\pi/2$. Thus \eqref{eq:cumulative-map} defines a unique smooth
increasing bijection $\Theta\colon(0,1)\to(0,\pi)$, with $\Theta'>0$
and endpoint limits $0$ and $\pi$. Equation~\eqref{eq:theta-derivative}
follows by differentiation.

In other words, $\Theta$ transports the
angular energy measure $cR^2/r\dd r$ of the disc model onto the angular
energy measure $\sin^2\theta\dd\theta$ of the spherical model. After
normalizing by their common total mass, this is the one-dimensional
increasing rearrangement obtained by matching cumulative distribution
functions; see~\cite[Section~2.1]{Santambrogio}. The map
$\Psi(r,\phi)=(\Theta(r),\phi)$ is a diffeomorphism from the punctured
disc onto the sphere with its poles removed. As $r\downarrow0$, $\Psi$
approaches the north pole, and as $r\uparrow1$, it approaches the south pole.
Thus \eqref{eq:transplant} defines $T$ on
$C_c^\infty(\Disk\setminus\set{0})$, with smooth images supported away
from both poles. We next verify the comparison for this candidate and extend
it to the full Sobolev space.

For our choice of $\Theta$, the angular term in
\eqref{eq:difference-structure} vanishes. We therefore obtain the exact
identity
\begin{equation}\label{eq:defect-identity}
 c\QB(u)-\QS(Tu)
 =\int_0^{2\pi}\int_0^1
 A(r)\abs{\partial_r (u/R)}^2\dd r\dd\phi,
 \qquad u\in C_c^\infty(\Disk\setminus\set{0}).
\end{equation}
Using \eqref{eq:theta-derivative}, the coefficient $A$ can be
written in either of the following forms:
\begin{equation}\label{eq:defect-weight}
 A(r)
 =crR(r)^2
       \biggl(1-\frac{\sin^6\Theta(r)}{c^2R(r)^4}\biggr)
 =\frac{\sin^2\Theta(r)}{\Theta'(r)}
       \left(r^2\Theta'(r)^2-\sin^2\Theta(r)\right).
\end{equation}
Indeed,
$\sin^4\Theta/\Theta'=r\sin^6\Theta/(cR^2)$ gives the first
expression directly from \eqref{eq:comparison-coefficients}. Thus it suffices
to prove $A>0$, or equivalently
\begin{equation}\label{eq:stretching-goal}
 r\Theta'(r)>\sin\Theta(r),
 \qquad\text{equivalently}\qquad
 cR(r)^2>\sin^3\Theta(r),\quad 0<r<1.
\end{equation}

Geometrically, under $\Psi$ the spherical
metric becomes $\Theta'(r)^2\dd r^2+\sin^2\Theta(r)\dd\phi^2$. Relative to
the Euclidean polar metric $\dd r^2+r^2\dd\phi^2$, the radial and angular
length factors are $\Theta'$ and $\sin\Theta/r$. A conformal radial map has
equal factors, whereas \eqref{eq:stretching-goal} asserts that the radial
factor is strictly larger. Proposition~\ref{prop:stretching} will therefore
show that $\Psi$ is nonconformal. The angular coefficients match because of
the choice~\eqref{eq:cumulative-map}, not by conformal invariance.

\subsection{A strict radial stretching inequality}
\label{sec:stretching}

The increasing solutions of $r\vartheta'=\sin\vartheta$ with
$\vartheta(0+)=0$ are
\[
 \vartheta_a(r)=2\arctan(ar),\qquad a>0,\quad r>0.
\]
These are the radial parts of inverse stereographic projection composed
with dilations of the plane; see~\cite[Section~2]{BandleBrillardFlucher}.
The same M\"obius transformations appear in Hersch's centering
argument~\cite{Hersch1970}. Here they serve only as comparison maps. They
map $(0,\infty)$ onto $(0,\pi)$ and satisfy
\begin{equation}\label{eq:comparison-map-derivative}
 r\vartheta_a'(r)=\sin\vartheta_a(r)
                 =\frac{2ar}{1+a^2r^2}.
\end{equation}
For each $r_0\in(0,1)$, one such map passes through
$(r_0,\Theta(r_0))$. We compare the associated radial states at this point.
Pulled back by $\vartheta_a$, the spherical state satisfies the disc radial
equation with a strictly decreasing potential in place of the constant $L$.

Put
\[
 Y_a(r)=S(\vartheta_a(r))
       =\left(\frac{2ar}{1+a^2r^2}\right)^\nu,
 \qquad \nu=\frac32.
\]
Since $rY_a'=\nu\cos\vartheta_a Y_a$, a further differentiation gives
\begin{equation}\label{eq:comparison-state-equation}
 -(rY_a')'+\frac{\nu^2}{r}Y_a
   =V_a(r) rY_a,
 \qquad
 V_a(r)=\frac{4\mu a^2}{(1+a^2r^2)^2},
 \qquad \mu=\nu(\nu+1).
\end{equation}
Indeed, $(rY_a')'=(\nu^2-\mu\sin^2\vartheta_a)Y_a/r$.
The potential $V_a$ is strictly decreasing for $r>0$.
Equation~\eqref{eq:comparison-map-derivative} also gives the normalization
\begin{equation}\label{eq:equal-total-masses}
 \int_0^\infty\frac{Y_a(r)^2}{r}\dd r
 =\int_0^\pi\sin^2\theta\dd\theta
 =\frac\pi2
 =c\int_0^1\frac{R(r)^2}{r}\dd r.
\end{equation}
Here the change of variables is $\theta=\vartheta_a(r)$, for which
$\dd r/r=\dd\theta/\sin\theta$.

\begin{proposition}\label{prop:stretching}
The map $\Theta$ defined by \eqref{eq:cumulative-map} satisfies
\eqref{eq:stretching-goal} at every $r\in(0,1)$.
\end{proposition}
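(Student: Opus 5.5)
The plan is to prove the second form of \eqref{eq:stretching-goal}, $cR(r_0)^2>\sin^3\Theta(r_0)$, at a fixed $r_0\in(0,1)$. I will compare $\Theta$ with the conformal map $\vartheta_a$ that passes through the same point, using cumulative angular-energy functions and a Wronskian argument.

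\emph{Setup.} Since $\vartheta_a(r_0)=2\arctan(ar_0)$ ranges over $(0,\pi)$ as $a$ ranges over $(0,\infty)$, there is a unique $a>0$ with $\vartheta_a(r_0)=\Theta(r_0)$. Put $f=\sqrt c\,R$ and $g=Y_a$, so that $g^2=\sin^3\vartheta_a$. Consider
\[
 D(r)=\int_0^r\frac{f(t)^2-g(t)^2}{t}\dd t,\qquad 0\le r\le1 .
\]
By \eqref{eq:cumulative-map} and the substitution $\theta=\vartheta_a(t)$, $\dd t/t=\dd\theta/\sin\theta$, we get
\[
 \int_0^r f^2/t=\int_0^{\Theta(r)}\sin^2 \qquad\text{and}\qquad \int_0^r g^2/t=\int_0^{\vartheta_a(r)}\sin^2 .
\]
Hence $D(r_0)=0$. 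By \eqref{eq:equal-total-masses}, $D(1)=\pi/2-\int_0^1 g^2/t=\int_1^\infty g^2/t>0$. The target inequality is $D'(r_0)>0$, i.e.\ $q(r_0)>1$ where $q=f/g$.

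\emph{Wronskian and unimodality of $q$.} Set $W=r(f'g-fg')$, so that $q'=W/(rg^2)$. From \eqref{eq:divergence-forms} and \eqref{eq:comparison-state-equation},
\[
 W'=(V_a-L)\,r f g .
\]
Because $V_a$ is strictly decreasing, $V_a-L$ changes sign at most once, from positive to negative. At the origin both $f$ and $g$ behave like multiples of $r^{3/2}$ by \eqref{eq:R-origin}, so $W(0+)=0$. At $r=1$ we have $W(1)=f'(1)g(1)<0$ by \eqref{eq:R-boundary}. Consequently either $W<0$ on all of $(0,1)$, or $W>0$ on $(0,s)$ and $W<0$ on $(s,1)$ for some $s$. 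In both cases $q$ is strictly increasing and then strictly decreasing (possibly only decreasing), with $q(1)=0$. No flat pieces occur, since $W$ vanishes only at isolated points. Therefore $\{q>1\}$ is an interval $(\alpha,\beta)$ with $0\le\alpha\le\beta<1$, possibly empty. Moreover $q<1$ on $(0,\alpha)$ and on $(\beta,1)$, apart from at most isolated points.

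\emph{Sign analysis of $D$.} On $(0,\alpha)$ we have $D'<0$ almost everywhere, on $(\alpha,\beta)$ we have $D'>0$, and on $(\beta,1)$ we have $D'<0$. Since $D(1)>0$ and $D$ decreases on $(\beta,1)$, $D>0$ on $[\beta,1]$.

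If $\alpha=0$, then $D$ increases from $D(0)=0$ and then stays positive, so $D>0$ on $(0,1)$. This contradicts $D(r_0)=0$; the same contradiction excludes an empty set $\{q>1\}$.

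Hence $\alpha>0$, and $D<0$ on $(0,\alpha]$. Then $D$ is strictly increasing on $(\alpha,\beta)$ with $D(\beta)>0$, so its only zero in $(0,1)$ lies inside $(\alpha,\beta)$. Thus $r_0\in(\alpha,\beta)$, which gives $q(r_0)>1$, i.e.\ $cR(r_0)^2>Y_a(r_0)^2=\sin^3\Theta(r_0)$.

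\emph{Main obstacle.} The delicate step is the endpoint behaviour of the Wronskian: we need $W(0+)=0$ from the matching $r^{\nu}$ asymptotics, and the strict sign $W(1)<0$. Together with the single sign change of $V_a-L$, these force $q$ to be unimodal. The rest is bookkeeping of signs.
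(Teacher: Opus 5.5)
Your proof is correct and follows the paper's argument: you choose $a$ with $\vartheta_a(r_0)=\Theta(r_0)$, use the Wronskian $W=r(f'g-fg')$ to get strict unimodality of $\sqrt{c}R/Y_a$, and use the two mass balances $D(r_0)=0$ and $D(1)=\int_1^\infty Y_a^2\dd r/r>0$, exactly as the paper does. The only difference is bookkeeping: the paper concludes by observing that the interval $\{\sqrt{c}R/Y_a>1\}$ must meet both $(0,r_0)$ and $(r_0,1)$, whereas you track the sign of the cumulative difference $D$, which amounts to the same thing.
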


\begin{proof}
Fix $a>0$. We first show that the ratio $R/Y_a$ is unimodal: it is either
strictly decreasing, or it increases strictly up to one point and then
decreases strictly. Consider the Wronskian
\[
 W_a(r)=r\bigl(R'(r)Y_a(r)-R(r)Y_a'(r)\bigr).
\]
Subtracting the disc equation \eqref{eq:R-equation} and
\eqref{eq:comparison-state-equation} in Wronskian form is the usual
Sturm--Liouville Lagrange-identity calculation;
see~\cite[Sections~5.4--5.5]{TeschlODE}. In the present notation it gives
\begin{equation}\label{eq:comparison-wronskian}
 W_a'(r)=\bigl(V_a(r)-L\bigr)rR(r)Y_a(r),
 \qquad
 \left(\frac{R}{Y_a}\right)'=\frac{W_a}{rY_a^2}.
\end{equation}
The origin expansions of $R$ and $Y_a$ imply $W_a(0)=0$, where $W_a(0)$
denotes the right-hand limit at $0$. At the other endpoint, we have
\[
 W_a(1)=R'(1)Y_a(1)<0.
\]
On $(0,1)$ the factor $rRY_a$ is positive, whereas $V_a-L$ is
strictly decreasing. If $V_a(0+)\le L$, then $W_a'<0$ throughout
$(0,1)$, so $W_a<0$. If $V_a(0+)>L$, then $W_a$ initially increases.
Since $W_a(1)<0$, the sign change of $V_a-L$ occurs inside $(0,1)$:
otherwise $W_a$ would increase on all of $(0,1)$ and be positive at $1$.
Hence $W_a$ increases from $0$, then decreases to a negative value,
and vanishes exactly once. The second identity in
\eqref{eq:comparison-wronskian} proves the asserted unimodality. In
particular, every strict superlevel set of $R/Y_a$ is an interval, and
$R/Y_a$ is not constant on any open interval.

Now fix $r_0\in(0,1)$ and choose
\[
 a=\frac{\tan(\Theta(r_0)/2)}{r_0},
 \qquad\text{so that}\qquad
 \vartheta_a(r_0)=\Theta(r_0).
\]
The definition of $\Theta$ and the substitution
$\theta=\vartheta_a(r)$ yield
\[
 c\int_0^{r_0}\frac{R(r)^2}{r}\dd r
 =\int_0^{\Theta(r_0)}\sin^2\theta\dd\theta
 =\int_0^{r_0}\frac{Y_a(r)^2}{r}\dd r.
\]
Together with \eqref{eq:equal-total-masses}, this gives
\begin{align}
 \int_0^{r_0}\frac{cR^2-Y_a^2}{r}\dd r&=0,
 \label{eq:two-mass-balances}\\
 \int_{r_0}^1\frac{cR^2-Y_a^2}{r}\dd r
 &=\int_1^\infty\frac{Y_a^2}{r}\dd r>0.
 \label{eq:comparison-tail}
\end{align}
The first balance forces $\sqrt{c} R/Y_a>1$ somewhere in $(0,r_0)$.
Otherwise, the integrand would be nonpositive there, and the zero
integral would force $R/Y_a=1/\sqrt{c}$ throughout $(0,r_0)$, contrary to
the strict unimodality just proved. The second balance forces
$\sqrt{c} R/Y_a>1$ somewhere in $(r_0,1)$.
Since the strict superlevel set $\set{\sqrt{c} R/Y_a>1}$ is an interval,
it contains $r_0$. Hence
\[
 cR(r_0)^2>Y_a(r_0)^2=\sin^3\Theta(r_0).
\]
Finally, \eqref{eq:theta-derivative} gives
$r_0\Theta'(r_0)>\sin\Theta(r_0)$. Since $r_0$ was arbitrary,
this proves the proposition.
\end{proof}

\begin{proposition}\label{prop:core-comparison}
For $u\in C_c^\infty(\Disk\setminus\set{0})$, the defect identity
\eqref{eq:defect-identity} holds with the weight in
\eqref{eq:defect-weight}, and $A(r)>0$ for every $0<r<1$. In particular,
\begin{equation}\label{eq:core-inequality}
 \QS(Tu)\le c\QB(u).
\end{equation}
\end{proposition}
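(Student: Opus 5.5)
The proposition mostly assembles computations already carried out in Sections~\ref{sec:what-is-forced}--\ref{sec:stretching}, so the plan is to make them rigorous for $u\in C_c^\infty(\Disk\setminus\set{0})$ and then invoke Proposition~\ref{prop:stretching}.

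\emph{Step 1: the quotient is a legitimate test function.} Fix such a $u$ with support in the annulus $\set{\delta\le r\le1-\delta}$. There $R\ge R_\delta>0$ by positivity of $R$ on $(0,1)$, so $w=u/R$ is smooth and compactly supported in the open annulus. Set $z=w\circ\Psi^{-1}$ on the sphere minus the poles. Since $\Theta$ is a smooth increasing diffeomorphism, $z$ is smooth with support in $\set{\Theta(\delta)\le\theta\le\Theta(1-\delta)}$, away from both poles. Hence $Tu=Sz$ is smooth on $\Sphere$, and in particular lies in $H^1(\Sphere)$.

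\emph{Step 2: the two factorizations.} For each fixed $\phi$, apply Lemma~\ref{lem:transform} on $J=(0,1)$ with $a=r$, $V=\nu^2/r-Lr$, $\rho=R$, and on $J=(0,\pi)$ with $a=\sin\theta$, $V=\nu^2/\sin\theta-\mu\sin\theta$, $\rho=S$. The hypotheses hold by \eqref{eq:divergence-forms} and the positivity of $R$ and $S$. The functions $w(\cdot,\phi)$ and $z(\cdot,\phi)$ lie in $C_c^\infty(J)$, so no boundary terms arise. Add the angular energies $\int R^2|w_\phi|^2/r$ and $\int S^2|z_\phi|^2/\sin\theta$, then integrate in $\phi$. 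This yields \eqref{eq:disk-factorization} and the spherical analogue. Changing variables $\theta=\Theta(r)$ in the latter, with $\dd\theta=\Theta'\dd r$, $z_\theta=w_r/\Theta'$ and $z_\phi=w_\phi$, gives \eqref{eq:sphere-factorization}.

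\emph{Step 3: subtract and identify the weight.} Subtracting gives \eqref{eq:difference-structure} with coefficients \eqref{eq:comparison-coefficients}. Since $c=\pi/(2I)$ and $\Theta$ is defined by \eqref{eq:cumulative-map}, differentiation gives \eqref{eq:theta-derivative}, so $B\equiv0$. This proves \eqref{eq:defect-identity}, because $w_r=\partial_r(u/R)$.

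The two expressions for $A$ in \eqref{eq:defect-weight} follow by substituting $\Theta'=cR^2/(r\sin^2\Theta)$:
\[
 \frac{\sin^4\Theta}{\Theta'}=\frac{r\sin^6\Theta}{cR^2},
 \qquad
 crR^2=\frac{r^2\Theta'\sin^2\Theta}{1}\cdot\frac{1}{r}\cdot r=r^2\Theta'\sin^2\Theta .
\]
Each is elementary algebra. In the first form, $A>0$ is equivalent to $cR^2>\sin^3\Theta$. Proposition~\ref{prop:stretching} gives exactly this at every $r\in(0,1)$, so $A(r)>0$.

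\emph{Step 4: the inequality.} Since $A\ge0$, the right side of \eqref{eq:defect-identity} is nonnegative, which is \eqref{eq:core-inequality}.

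The only substantive input is the strict inequality from Proposition~\ref{prop:stretching}. Beyond that, the one point needing care is Step~1, which justifies that the integrations by parts have no endpoint terms. This is immediate from compact support away from the origin and the boundary circle, together with the fact that $\Psi$ maps that support away from both poles.
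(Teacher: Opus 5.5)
Your proof is correct and follows the paper's route. The paper's proof likewise cites the ground-state factorization and angular matching from Section~\ref{sec:angular-matching}, where \eqref{eq:theta-derivative} gives $B\equiv0$, and then invokes Proposition~\ref{prop:stretching} for $A>0$; your Step~1 spells out the support considerations the paper states just before Lemma~\ref{lem:transform}, and in your display for $crR^2$ the middle expression is redundant but the resulting identity $crR^2=r^2\Theta'\sin^2\Theta$ is right.
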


\begin{proof}
The identity was derived in Section~\ref{sec:angular-matching}.
Proposition~\ref{prop:stretching} and either expression in
\eqref{eq:defect-weight} give $A>0$, and the comparison follows.
\end{proof}

\subsection{The estimates needed at the poles}
\label{sec:sobolev}

We now extend the comparison from
smooth test functions to the full Dirichlet form domain. The extension
applies to functions supported in arbitrary open cells and carries the
defect estimate needed in the equality case.
We first prove a norm bound and then use density to obtain the extension.

\begin{lemma}\label{lem:weight-bound}
The function $r\mapsto\sin\Theta(r)/r$ is bounded on $(0,1)$.
\end{lemma}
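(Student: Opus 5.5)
Since $\sin\Theta(r)/r$ is continuous and positive on $(0,1)$, it suffices to control it near the two endpoints. Near $r=1$ there is nothing to prove: $\sin\Theta\le1$ and $r$ is bounded away from zero. So the whole task is the behaviour as $r\downarrow0$, where we must show that $\Theta(r)=O(r)$.

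For this we use the defining relation \eqref{eq:cumulative-map} together with the origin expansion \eqref{eq:R-origin}. Since $R(t)=\beta t^{3/2}(1+O(t^2))$, the right-hand side is
\[
 c\int_0^r\frac{R(t)^2}{t}\dd t=\frac{c\beta^2}{3}r^3\bigl(1+O(r^2)\bigr).
\]
On the left, $\int_0^{\theta}\sin^2t\dd t=\tfrac{\theta^3}{3}(1+O(\theta^2))$ as $\theta\downarrow0$. Because $\Theta(r)\to0$ as $r\downarrow0$, equating the two sides gives
\[
 \Theta(r)^3\bigl(1+o(1)\bigr)=c\beta^2r^3\bigl(1+o(1)\bigr),
\]
so $\Theta(r)/r\to(c\beta^2)^{1/3}$. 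Since also $\sin\Theta\le\Theta$, the quotient $\sin\Theta(r)/r$ has a finite limit at $0$.

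Combining the finite limit at $0$, continuity on compact subintervals, and the trivial bound near $1$ gives boundedness on $(0,1)$.

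An alternative avoiding the asymptotics is to use Proposition~\ref{prop:stretching} in the form $cR^2>\sin^3\Theta$, which gives $\sin\Theta<(cR^2)^{1/3}\le C r$ because $R(r)\le C' r^{3/2}$ on $(0,1)$ by \eqref{eq:R-origin} and continuity. Either route works; the only genuine step is the small-$r$ behaviour, and it is elementary.
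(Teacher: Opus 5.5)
Your proof is correct, but your main argument takes a different route from the paper. The paper uses no asymptotics. It invokes Proposition~\ref{prop:stretching} in the form $r\Theta'>\sin\Theta$ and notes that this is exactly positivity of the logarithmic derivative of $\tan(\Theta(r)/2)/r$. From this monotonicity it reads off $\sin\Theta(r)\le 2\tan(\Theta(r)/2)\le 4r\tan(\Theta(\tfrac12)/2)$ for $r\le\tfrac12$, with the trivial bound $2$ on $[\tfrac12,1)$.

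Your primary argument instead inverts the defining relation \eqref{eq:cumulative-map} near the origin, using only the expansion \eqref{eq:R-origin}, to get $\Theta(r)\sim(c\beta^2)^{1/3}r$. This buys three things. It is independent of the Wronskian comparison. It shows the boundedness needs no strict stretching at all. It also gives the exact limiting slope, the disc analogue of \eqref{ho:origin-asymptotic}, which the paper itself uses for the same purpose in the oscillator section. The paper's route, by contrast, gives an explicit constant and the extra fact that $\tan(\Theta/2)/r$ is increasing, at the cost of depending on Proposition~\ref{prop:stretching}.

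Your alternative sits between the two. It uses the same proposition, but in its equivalent form $cR^2>\sin^3\Theta$, together with $R\le C'r^{3/2}$ on $(0,1)$. It is equally short and also valid.
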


\begin{proof}
By Proposition~\ref{prop:stretching}, the logarithmic derivative of
$r\mapsto\tan(\Theta(r)/2)/r$ is positive, so this function is increasing. For $0<r\le\frac12$
this yields
$\sin\Theta(r)\le2\tan(\Theta(r)/2)\le4r\tan(\Theta(\frac12)/2)$, and for
$\frac12\le r<1$, we have $\sin\Theta(r)/r\le2$.
\end{proof}

For a smooth core function, a change of variables and
\eqref{eq:theta-derivative} give
\begin{align}
 \norm{Tu}_{L^2(\Sphere)}^2
 &=\int_0^{2\pi}\int_0^1
     \frac{\sin^4\Theta(r)}{R(r)^2}\Theta'(r)\abs{u(r,\phi)}^2\dd r\dd\phi
     \notag\\
 &=c\int_0^{2\pi}\int_0^1
       \frac{\sin^2\Theta(r)}{r}\abs{u(r,\phi)}^2\dd r\dd\phi
     \notag\\
 &=c\int_{\Disk}
       \left(\frac{\sin\Theta(\norm{x})}{\norm{x}}\right)^2\abs{u(x)}^2\dd x
     \notag\\
 &\le C_0\norm{u}_{L^2(\Disk)}^2,
\label{eq:mass-identity}
\end{align}
where $C_0=c\sup_{0<r<1}(\sin\Theta(r)/r)^2<\infty$ by
Lemma~\ref{lem:weight-bound}. Hence $T$ is bounded on $L^2$. The weight is
positive for $0<\norm{x}<1$, so $T$ is injective, but this weight tends to zero as
$\norm{x}\uparrow1$, so the source and target $L^2$ norms are not
equivalent.

Combining \eqref{eq:mass-identity} with \eqref{eq:core-inequality} and
$\QB(u)\le\norm{\nabla u}_{L^2(\Disk)}^2$ (as $L>0$) yields
\begin{align}
 \norm{Tu}_{H^1(\Sphere)}^2
 &=\QS(Tu)+(\mu+1)\norm{Tu}_{L^2(\Sphere)}^2\notag\\
 &\le c\QB(u)+(\mu+1)C_0\norm{u}_{L^2(\Disk)}^2\notag\\
 &\le c\norm{\nabla u}_{L^2(\Disk)}^2
       +(\mu+1)C_0\norm{u}_{L^2(\Disk)}^2.
 \label{eq:H1-bound}
\end{align}
Although the shifted form $\QS(Tu)$ need not be nonnegative, the resulting
estimate is an ordinary $H^1$ bound for $T$ on the smooth core.

\subsubsection*{A dense core avoiding the center}

To extend $T$, we use the fact that an interior point is removable
for planar Dirichlet Sobolev spaces. We state it for arbitrary open cells,
since the extension must also preserve each cell's Dirichlet form domain.

\begin{lemma}\label{lem:punctured-core}
Let $D\subset\R^2$ be open and let $p\in\R^2$. Then
\begin{equation}\label{eq:point-removability}
 H^1_0(D\setminus\set{p})=H^1_0(D),
\end{equation}
where both spaces are identified by zero extension to $\R^2$.
The same conclusion holds for the closures of $C_c^\infty(D\setminus\set{p})$
and $C_c^\infty(D)$ in the norm
 $\bigl(\int_{\R^2}(\abs{\nabla u}^2+(1+V)\abs{u}^2)\dd x\bigr)^{1/2}$,
whenever $V\ge0$ is locally bounded.
\end{lemma}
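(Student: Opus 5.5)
The plan is to show that $C_c^\infty(D\setminus\set{p})$ is dense in $H^1_0(D)$; the reverse inclusion $H^1_0(D\setminus\set{p})\subset H^1_0(D)$ is trivial. If $p\notin D$ there is nothing to prove, so assume $p\in D$. Since $C_c^\infty(D)$ is dense in $H^1_0(D)$ by definition, it suffices to approximate a fixed $\varphi\in C_c^\infty(D)$ by functions in $C_c^\infty(D\setminus\set{p})$. For this we use the standard logarithmic cutoff, which works because points have zero $H^1$-capacity in the plane.

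For $0<\varepsilon<1$, let $\chi_\varepsilon(x)=\eta\bigl(\log\abs{x-p}/\log\varepsilon\bigr)$, where $\eta\in C^\infty(\R)$ equals $1$ on $(-\infty,1]$ and $0$ on $[2,\infty)$. Thus $\chi_\varepsilon=0$ on $\abs{x-p}\le\varepsilon^2$ and $\chi_\varepsilon=1$ on $\abs{x-p}\ge\varepsilon$, with $0\le\chi_\varepsilon\le1$. We have $\abs{\nabla\chi_\varepsilon}\le C/(\abs{x-p}\abs{\log\varepsilon})$ on the annulus $\varepsilon^2<\abs{x-p}<\varepsilon$. Integrating in polar coordinates gives
\[
 \int\abs{\nabla\chi_\varepsilon}^2\dd x\le\frac{2\pi C^2}{\abs{\log\varepsilon}^2}\int_{\varepsilon^2}^{\varepsilon}\frac{\dd s}{s}=\frac{2\pi C^2}{\abs{\log\varepsilon}}\to0.
\]
The function $(1-\chi_\varepsilon)\varphi$ is supported in the ball of radius $\varepsilon$ around $p$. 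Its $L^2$ norm, and the norm of $(1-\chi_\varepsilon)\nabla\varphi$, are bounded by $\norm{\varphi}_{C^1}$ times the square root of the ball's area, so they tend to $0$. The remaining term $\varphi\nabla\chi_\varepsilon$ is controlled by $\norm{\varphi}_\infty\norm{\nabla\chi_\varepsilon}_2\to0$. Hence $\chi_\varepsilon\varphi\to\varphi$ in $H^1$. Each $\chi_\varepsilon\varphi$ is smooth, because $\chi_\varepsilon$ is smooth away from $p$ and vanishes near $p$, and its support is compact in $D\setminus\set{p}$.

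For the weighted norm, the only additional term is $\int V\abs{(1-\chi_\varepsilon)\varphi}^2\dd x$. Since $V$ is locally bounded, this term is at most $\sup_{\supp\varphi}V\cdot\norm{\varphi}_\infty^2$ times the area of the ball of radius $\varepsilon$, which tends to $0$. The same approximants therefore converge in that norm, and the closures coincide.

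The only real obstacle is the two-dimensional capacity estimate. A linear cutoff at scale $\varepsilon$ would give gradient energy of order $1$, which does not tend to zero. The logarithmic profile is what makes the Dirichlet energy vanish.
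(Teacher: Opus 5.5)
Your proof is correct and follows the same route as the paper. Both use a logarithmic cutoff between radii $\varepsilon^2$ and $\varepsilon$ whose Dirichlet energy is $O(1/\lvert\log\varepsilon\rvert)$, then density of $C_c^\infty(D)$ and the trivial reverse inclusion, with the potential term handled by boundedness of $V$ on $\operatorname{supp}\varphi$. The only difference is technical: composing with the smooth profile $\eta$ makes $\chi_\varepsilon\varphi$ itself an element of $C_c^\infty(D\setminus\{p\})$, whereas the paper uses the Lipschitz piecewise-logarithmic cutoff and then mollifies inside $D\setminus\{0\}$.
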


\begin{proof}
By translation, take $p=0$. For $0<\varepsilon<1$, set
\[
 \chi_\varepsilon(r)=
 \begin{cases}
 0,&r\le\varepsilon^2,\\
 \dfrac{\log(r/\varepsilon^2)}{\log(1/\varepsilon)},
       &\varepsilon^2<r<\varepsilon,\\
 1,&r\ge\varepsilon.
 \end{cases}
 \qquad
 \int_{\R^2}\abs{\nabla\chi_\varepsilon}^2\dd x
       =\frac{2\pi}{\log(1/\varepsilon)}\to 0.
\]
For $\varphi\in C_c^\infty(D)$, boundedness of $\varphi$ and dominated
convergence give $\chi_\varepsilon\varphi\to\varphi$ in $H^1$.
The potential term also converges, since $V$ is bounded on the fixed
compact support. Mollifying $\chi_\varepsilon\varphi$ inside
$D\setminus\set{0}$ gives approximants in $C_c^\infty(D\setminus\set{0})$.
Density and the reverse inclusion prove both assertions.
\end{proof}

\subsection{The full-space comparison and its equality case}
\label{sec:global}

We collect the properties used in the partition argument: the form
inequality, the formula for $T$, the form-domain membership of functions
supported in a cell, and the strict defect that rules out radial variation.
To pass to the limit in the defect identity, we first restrict the integral
to compact annuli, where no endpoint singularity is present.

\begin{proposition}\label{prop:full-comparison}
Formula~\eqref{eq:transplant} extends to a bounded linear operator
\[
 T\colon H^1_0(\Disk)\to H^1(\Sphere),
\]
with the following properties.
\begin{enumerate}
\item\label{item:formula}
The extension is given by the same formula almost everywhere. It is
injective and preserves nonnegativity and segregation.
\item\label{item:inequality}
For every $u\in H^1_0(\Disk)$,
\begin{equation}\label{eq:global-comparison}
 \QS(Tu)\le c\QB(u),
\end{equation}
and for every $0<a<b<1$,
\begin{equation}\label{eq:localized-defect}
 c\QB(u)-\QS(Tu)
 \ge\int_0^{2\pi}\int_a^b
 A(r)\abs{\partial_r(u/R)}^2\dd r\dd\phi .
\end{equation}
\item\label{item:cells}
If $D\subset\Disk$ is open and $u\in H^1_0(D)$, then
$Tu\in H^1_0\bigl(\Psi(D\setminus\set{0})\bigr)$.
\item\label{item:rigidity}
If $c\QB(u)=\QS(Tu)$, there exists $f\in H^1(\Circle)$ such that
\begin{equation}\label{eq:radial-rigidity}
 u(r,\phi)=R(r)f(\phi)\quad\text{a.e. on }\Disk.
\end{equation}
\end{enumerate}
\end{proposition}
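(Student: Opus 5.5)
The plan is to extend $T$ by density from the core $C_c^\infty(\Disk\setminus\set{0})$, which is dense in $H^1_0(\Disk)$ by Lemma~\ref{lem:punctured-core} applied with $D=\Disk$, $p=0$. The bound \eqref{eq:H1-bound} shows that $T$ is bounded from this core, with the $H^1_0(\Disk)$ norm, into $H^1(\Sphere)$. Hence $T$ extends uniquely to a bounded operator. For $u_n\to u$ in $H^1_0(\Disk)$ with $u_n$ in the core, \eqref{eq:mass-identity} gives $Tu_n\to Tu$ in $L^2(\Sphere)$. Passing to an a.e.\ convergent subsequence on both sides, $(Tu)(\Theta(r),\phi)=S(\Theta(r))u(r,\phi)/R(r)$ a.e., which proves the formula in item~\eqref{item:formula}. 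Injectivity follows from the positive weight in \eqref{eq:mass-identity}, which persists in the limit. Nonnegativity and segregation are pointwise consequences of the formula, since the multiplier $S\circ\Theta/R$ is positive and $\Psi$ is a diffeomorphism of the punctured disc onto the twice-punctured sphere, so it maps null sets to null sets.

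For item~\eqref{item:inequality}, I would take $u_n\to u$ in the core. Then $\QB(u_n)\to\QB(u)$, and $\QS(Tu_n)\to\QS(Tu)$ by $H^1(\Sphere)$ convergence, so \eqref{eq:core-inequality} passes to the limit and gives \eqref{eq:global-comparison}. For the localized defect, fix $0<a<b<1$. On the annulus $a<r<b$ the functions $R$ and $1/R$ are smooth and bounded with bounded derivatives, and $A$ is bounded. So $u\mapsto \partial_r(u/R)$ is continuous from $H^1$ of the annulus to $L^2$, and the right side of \eqref{eq:localized-defect} is continuous in $u\in H^1_0(\Disk)$. The identity \eqref{eq:defect-identity} together with $A>0$ gives \eqref{eq:localized-defect} on the core, and both sides are continuous, so it holds for all $u$.

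For item~\eqref{item:cells}, apply Lemma~\ref{lem:punctured-core} to $D$ itself: $H^1_0(D)$ is the closure of $C_c^\infty(D\setminus\set{0})$. For such $\varphi$, $T\varphi$ is smooth with compact support in $\Psi(D\setminus\set{0})$, since its support stays away from the poles. By boundedness, $T$ maps the closure into the closed subspace $H^1_0(\Psi(D\setminus\set{0}))$.

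For item~\eqref{item:rigidity}, equality together with \eqref{eq:localized-defect} gives $\partial_r(u/R)=0$ a.e.\ on every annulus $a<r<b$, because $A>0$ and $A$ is continuous, hence bounded below there. Here $u/R\in H^1_{\mathrm{loc}}$ of the punctured disc. In the coordinates $(r,\phi)\in(a,b)\times\Circle$, a Sobolev function with vanishing weak $r$-derivative is a.e.\ a function $f(\phi)$ alone. This can be checked by mollifying in $\phi$, or via absolute continuity on lines. Exhausting by annuli shows that $f$ is independent of $a,b$, so $u=R(r)f(\phi)$ a.e. Finally, $f\in H^1(\Circle)$: on a fixed annulus $u/R\in H^1$, and $\partial_\phi(u/R)=f'(\phi)$ is square integrable over the annulus, so $f'\in L^2(\Circle)$.

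The main obstacle I expect is the justification of the a.e.\ formula and of the local Sobolev regularity used in the rigidity step. It requires tracking a.e.\ limits under the change of variables $\Psi$ and the singular weights near $r=0,1$. This is why everything is localized to compact annuli, where all coefficients are harmless.
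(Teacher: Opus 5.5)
Your proposal is correct and follows the paper's proof essentially step for step. It uses density of the punctured core via Lemma~\ref{lem:punctured-core} and \eqref{eq:H1-bound} for the extension, the $L^2$ bound \eqref{eq:mass-identity} to identify the formula almost everywhere, continuity of both forms and annular localization for \eqref{eq:localized-defect}, the punctured core of $D$ for item~\ref{item:cells}, and slicing on compact annuli for item~\ref{item:rigidity}. The one step worth tightening is $f\in H^1(\Circle)$: the paper notes that the radial average $\frac{1}{b-a}\int_a^b (u/R)\dd r$ equals $f$ and has weak angular derivative equal to the radial average of $\partial_\phi(u/R)$, which is square integrable.
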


\begin{proof}
The estimate \eqref{eq:H1-bound} and Lemma~\ref{lem:punctured-core} with
$D=\Disk$ give the required boundedness and a unique linear extension. Since
$T$ is also bounded from $L^2(\Disk)$ to $L^2(\Sphere)$ by
\eqref{eq:mass-identity}, the extension is given by the same formula
almost everywhere, and \eqref{eq:mass-identity} continues to hold. Writing
$m(r)=S(\Theta(r))/R(r)$, the formula is $Tu=(m u)\circ\Psi^{-1}$.
The diffeomorphism $\Psi$ between the punctured disc and the sphere minus
the poles preserves null sets in both directions, and $m$ is positive.
Thus $T$ preserves nonnegativity and segregation; it is injective because
the weight in~\eqref{eq:mass-identity} is positive almost everywhere.
This proves item~\ref{item:formula}.

The two quadratic forms are continuous in their respective $H^1$ norms,
so \eqref{eq:global-comparison} follows from \eqref{eq:core-inequality}
by taking limits. Let $u_n\in C_c^\infty(\Disk\setminus\set{0})$
converge to $u$ in $H^1_0(\Disk)$. On the annulus $a<r<b$, the function
$R$ is smooth and bounded away from zero, so $u_n/R\to u/R$ strongly in
$H^1$ there; polar and Cartesian Sobolev norms are equivalent on the
annulus, and $A$ is bounded on $[a,b]$. Restricting the defect integral in
\eqref{eq:defect-identity} to the annulus and passing to the limit, while
discarding the nonnegative contribution from the rest of the disc, proves
\eqref{eq:localized-defect}.
This proves item~\ref{item:inequality}.

For item~\ref{item:cells}, Lemma~\ref{lem:punctured-core} gives
$u_n\in C_c^\infty(D\setminus\set{0})$ with $u_n\to u$ in $H^1_0(\Disk)$.
Each $Tu_n$ is smooth with compact support in $\Psi(D\setminus\set{0})$,
and $Tu_n\to Tu$ in $H^1(\Sphere)$. Hence
$Tu\in H^1_0(\Psi(D\setminus\set{0}))$.

For item~\ref{item:rigidity}, suppose that the global defect
$c\QB(u)-\QS(Tu)$ vanishes. The localized inequality and the strict positivity
of $A$ on every compact subinterval then give
\[
 \partial_r(u/R)=0\quad\text{a.e. on every compact annulus.}
\]
On a product cylinder $(a,b)\times\Circle$, Sobolev slicing
\cite[Chapter~4]{EvansGariepy2015} gives $u/R=f_{a,b}(\phi)$ almost
everywhere. The radial average
\[
 f_{a,b}(\phi)=\frac{1}{b-a}\int_a^b\frac{u(r,\phi)}{R(r)}\dd r
\]
belongs to $H^1(\Circle)$: its weak angular derivative is the average
of $\partial_\phi(u/R)$. Overlapping annuli identify these factors and
give a single $f\in H^1(\Circle)$ satisfying~\eqref{eq:radial-rigidity}.
This proves item~\ref{item:rigidity} and completes the proof.
\end{proof}

\section{Optimality of the Y-partition}\label{sec:optimality}

We use the comparison from Section~\ref{sec:transplantation} to determine the
optimal value. We first recall the spherical result and justify its use for
arbitrary open cells.

An open three-partition of $\Sphere$ is a triple of pairwise disjoint,
nonempty, connected open subsets of $\Sphere$. Its energy and the minimal
energy $\partenergy_3(\Sphere)$ are defined as in
\eqref{eq:partition-energy}, with $\lambda_1(E)$ as in
Section~\ref{sec:transplantation}.

\begin{theorem}[Helffer, Hoffmann-Ostenhof, Terracini]
\label{thm:spherical-input}
The minimal three-partition energy of the sphere is
$\partenergy_3(\Sphere)=\mu=15/4$. The value is attained by a regular strong
partition, and every regular strong minimizing partition consists, up to a
rotation and relabeling, of the three equal spherical lunes.
\end{theorem}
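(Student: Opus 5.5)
This statement is the cited result of Helffer, Hoffmann-Ostenhof, and Terracini~\cite[Theorem~1.1]{HHTsphere}, so I will not reprove it. The plan is to recall the structure of their argument and to check that it applies to the class of competitors used here: arbitrary open, connected, nonempty, pairwise disjoint cells, with no regularity imposed.

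The structure of the HHT proof is as follows.

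\emph{Upper bound.} Each of the three equal lunes has first eigenvalue $\mu=15/4$, with ground state $S(\theta)\sin(\nu(\phi-\phi_0))$, as verified in Section~\ref{sec:radial-models}. Hence $\partenergy_3(\Sphere)\le\mu$.

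\emph{Existence and regularity.} By the general theory~\cite{HHTnodal}, a minimizer exists and admits a regular strong representative. It is an equipartition, and its interface satisfies the equal-angle meeting condition at the vertices.

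\emph{Lower bound.} An Euler-formula count on $\Sphere$ shows that a regular strong three-partition is either nodal or has exactly two triple points, joined by three arcs. The nodal case is excluded: nodal partitions come from eigenfunctions, and no eigenfunction of $\Sphere$ is Courant-sharp at level $3$, so this would contradict $\partenergy_3<\lambda_3(\Sphere)=6$. In the two-triple-point case, one passes to the double covering ramified at the two triple points. There the lifted partition becomes a nodal six-partition of an Aharonov--Bohm type operator. A spectral comparison then identifies its energy as at least the relevant eigenvalue, which equals $\mu$. Equality forces the triple points to be antipodal and the arcs to be meridians meeting at angle $2\pi/3$, which gives uniqueness up to rotation.

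\emph{Passing to open cells.} The step I need to add is that the infimum over arbitrary open three-partitions equals the infimum over regular strong representatives. The reason is that the existence theorem of~\cite[Theorems~1.12 and~4.14]{HHTnodal} starts from the infimum over open (even quasi-open) cells, and that infimum is attained by a regular strong partition. This is the same justification the paper invoked for the disc. So $\partenergy_3(\Sphere)=\mu$ holds in our sense, with cells satisfying $H^1_0(E)\subset H^1(\Sphere)$ as defined in Section~\ref{sec:transplantation}.

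The main obstacle would be the lower bound in the two-triple-point case, that is, the double-cover spectral estimate. Since this is exactly the content of~\cite{HHTsphere}, I take it as given.
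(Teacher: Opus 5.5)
Your proposal is correct in substance. Like the paper, it takes the value and the classification of regular minimizers from HHT's Theorem~1.1 as given. The genuine difference is how you extend the lower bound to arbitrary open cells.

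\textbf{The paper's route.} The paper notes that HHT's competitor class consists of regular open domains, and it uses an elementary inner approximation. Given an open triple $(E_i)$ and $\varepsilon>0$, take a test function in $C_c^\infty(E_i)$ with Rayleigh quotient at most $\lambda_1(E_i)+\varepsilon$. Its support lies inside a smooth connected $F_i\Subset E_i$, so $\lambda_1(F_i)\le\lambda_1(E_i)+\varepsilon$. Applying the regular theorem to $(F_i)$ and letting $\varepsilon\downarrow0$ gives $\max_i\lambda_1(E_i)\ge\mu$, and the lunes give attainment. This needs only domain monotonicity of $\lambda_1$ and density of test functions; no existence or regularity theory on the sphere enters.

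\textbf{Your route.} You invoke the existence and regularity theory of \cite[Theorems~1.12 and~4.14]{HHTnodal} to say that the infimum over open (or quasi-open) cells is attained by a regular strong partition.
\begin{itemize}
\item This yields more: attainment and regularity in the larger class. It would even give directly the relaxed identity that the paper proves separately in Theorem~\ref{thm:spherical-equipartition} via Lemma~\ref{lem:spherical-functional}.
\item It rests on much heavier machinery.
\item Those theorems are proved for bounded planar domains. Citing them ``as for the disc'' therefore does not by itself cover $\Sphere$; you must also appeal to their extension to compact surfaces.
\end{itemize}

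\textbf{A slip in your sketch.} Your sketch of HHT's internal argument is not load-bearing, but one line is wrong as written. In fact $\lambda_3(\Sphere)=2$, not $6$, and $\partenergy_3(\Sphere)\ge\lambda_3(\Sphere)$ always holds, so the inequality $\partenergy_3<\lambda_3$ can never occur. The correct way to exclude the nodal case is this. Eigenfunctions for the eigenvalues $0$ and $2$ have at most two nodal domains, so a nodal three-partition has energy at least $6>15/4$. Equivalently, $\lambda_3(\Sphere)$ is not Courant-sharp.
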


The value and the classification of regular minimizing partitions
are proved in~\cite[Theorem~1.1]{HHTsphere}. Their partition class, defined
on p.~154 immediately before (1.2), consists of regular open domains.
For an arbitrary open triple $(E_i)$ and
$\varepsilon>0$, compactly supported test functions give smooth connected
subdomains $F_i\Subset E_i$ with
$\lambda_1(F_i)\le\lambda_1(E_i)+\varepsilon$.
Applying the regular theorem to $(F_i)$ and letting $\varepsilon\downarrow0$
gives the same lower bound for $(E_i)$. The equal lunes attain $\mu$.

\begin{proof}[Proof of optimality in Theorem~\ref{thm:main}]
The sector ground states constructed in Section~\ref{sec:radial-models} give
$\partenergy_3(\Disk)\le L$. Suppose, for a contradiction, that an open
three-partition $(D_1,D_2,D_3)$ satisfies
\[
 \max_i\lambda_1(D_i)<L.
\]
By the variational characterization of the first eigenvalue, choose
nonzero functions $u_i\in H^1_0(D_i)$ with $\RB(u_i)<L$, and set
\[
 E_i=\Psi(D_i\setminus\set{0}).
\]
Removing a point from a connected open subset of the plane leaves it
connected, so the $E_i$ are nonempty, connected, pairwise disjoint open
subsets of the sphere. By Proposition~\ref{prop:full-comparison}, each
$Tu_i$ is nonzero and belongs to $H^1_0(E_i)$, and
\[
 \QS(Tu_i)\le c\QB(u_i)
   =c\bigl(\RB(u_i)-L\bigr)\norm{u_i}_{L^2(\Disk)}^2<0.
\]
Consequently,
\[
 \lambda_1(E_i)\le\RS(Tu_i)<\mu,
 \qquad i=1,2,3,
\]
contradicting Theorem~\ref{thm:spherical-input}. Hence
$\partenergy_3(\Disk)\ge L$. Together with the upper bound, this proves
$\partenergy_3(\Disk)=L$, attained by the Y-partition.
\end{proof}

\section{Spherical saturation}\label{sec:sphere-input}

The spherical lower bound rules out three Rayleigh quotients that are all
strictly below $\mu$, which was enough for optimality. To identify the
equality case, we need the stronger implication
\[
 \RS(v_i)\le\mu\quad(i=1,2,3)
 \implies
 \RS(v_1)=\RS(v_2)=\RS(v_3)=\mu
\]
for nonnegative, nonzero, segregated functions $v_1,v_2,v_3$ in
$H^1(\Sphere)$. The lower bound for the maximum alone does not exclude one
quotient below $\mu$ while the other two equal $\mu$. We deduce this
saturation statement from the equipartition theorem for minimal
spectral partitions. Since the positivity sets of the $v_i$ need only be
measurable, the theorem must be used in the measurable-support formulation.

\subsection{The functional lower bound}

The lower bound for segregated functions follows from the open-partition
value alone. We record this direct argument separately from the stronger
saturation statement: the lower bound suffices for optimality in both
planar problems, whereas their rigidity proofs also use saturation.

\begin{lemma}\label{lem:spherical-functional}
Let $g_1,g_2,g_3\in H^1(\Sphere)$ be nonnegative and nonzero, with
$g_i g_j=0$ almost everywhere for $i\ne j$.  Then
\begin{equation}\label{ho:sphere-functional}
 \max_i\frac{\int_{\Sphere}\abs{\nabla_{\Sphere}g_i}^2\dd A}
                  {\int_{\Sphere}\abs{g_i}^2\dd A}
 \ge\mu.
\end{equation}
Equivalently, at least one of the numbers $\QS(g_i)$ is nonnegative.
\end{lemma}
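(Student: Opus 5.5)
We argue by contradiction, reducing the functional statement to the open-partition lower bound of Theorem~\ref{thm:spherical-input}. Suppose $\RS(g_i)<\mu$ for $i=1,2,3$. The positivity sets $\{g_i>0\}$ are only measurable, so Theorem~\ref{thm:spherical-input} does not apply to them directly. We therefore first replace each $g_i$ by a function whose positivity set is open, keeping the three functions segregated and their Rayleigh quotients below $\mu$.

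The construction uses truncation and mollification. For $\varepsilon>0$ set $g_i^\varepsilon=(g_i-\varepsilon)_+$. Then $g_i^\varepsilon\to g_i$ in $H^1(\Sphere)$ as $\varepsilon\downarrow0$, so for small $\varepsilon$ each quotient $\RS(g_i^\varepsilon)$ remains below $\mu$. Truncation alone does not make the positivity sets separated by positive distance. It is cleaner to pass instead to continuous representatives, as follows.

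Choose $h_i\in C^\infty(\Sphere)$ with $h_i\to g_i$ in $H^1$. Write $\psi_\varepsilon(t)=(t-\varepsilon)_+$ and define
\[
 \tilde g_i=\psi_\varepsilon\Bigl(h_i-\sum_{j\ne i}h_j\Bigr)_+ .
\]
These functions are Lipschitz, and they are segregated: $\tilde g_i>0$ forces $h_i>\sum_{j\ne i}h_j+\varepsilon$, so two of them cannot be positive at the same point. Since $g_ig_j=0$ a.e., we have $(g_i-\sum_{j\ne i}g_j)_+=g_i$. The map $v\mapsto v_+$ is continuous on $H^1$, so $\tilde g_i\to g_i$ in $H^1$ as $h\to g$ and $\varepsilon\to0$. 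Consequently $\RS(\tilde g_i)<\mu$ for a suitable choice, and the sets $U_i=\{\tilde g_i>0\}$ are open and pairwise disjoint.

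Each $\tilde g_i$ is continuous and vanishes on $\partial U_i$, so $\tilde g_i\in H^1_0(U_i)$. This is the standard fact that $(v-\delta)_+$ has compact support in $U_i$ and converges to $v$ in $H^1$ as $\delta\downarrow0$. The sets $U_i$ may be disconnected. Restricting $\tilde g_i$ to the components of $U_i$ splits its Dirichlet energy and its mass additively, so some component $E_i$ satisfies $\lambda_1(E_i)\le\RS(\tilde g_i|_{E_i})\le\RS(\tilde g_i)<\mu$. Here the restriction lies in $H^1_0(E_i)$ because the components are separated open sets. The components $E_1,E_2,E_3$ then form an open three-partition with energy below $\mu$, which contradicts Theorem~\ref{thm:spherical-input}, extended to arbitrary open cells as in the remark following it.

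The equivalence with ``some $\QS(g_i)\ge0$'' is immediate from $\QS(g_i)=(\RS(g_i)-\mu)\norm{g_i}^2$. The step that needs care is the approximation: the segregation and the strict inequalities must survive simultaneously under the smoothing.
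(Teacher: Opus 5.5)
Your route is the paper's: approximate by continuous functions, form $\bigl(h_i-\sum_{j\ne i}h_j\bigr)_+$ to get disjoint open positivity sets, use the $H^1$-continuity of the positive part, obtain $H^1_0$-membership from the $(v-\delta)_+$ truncation, pass to a connected component, and invoke Theorem~\ref{thm:spherical-input}. One step fails as written, however: the segregation of the $\tilde g_i$. Let $\{i,k,l\}=\{1,2,3\}$. If $\tilde g_i>0$ and $\tilde g_k>0$ at a point, then $h_i>h_k+h_l+\varepsilon$ and $h_k>h_i+h_l+\varepsilon$ there. Adding these gives only $h_l<-\varepsilon$, which is no contradiction, because you did not require the smooth approximants $h_l$ to be nonnegative.

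Such points do occur. Since $H^1(\Sphere)$ does not embed in $L^\infty$, an $H^1$-close smooth approximant of $g_l\ge0$ can have a deep negative dip, of depth $M$ and Dirichlet energy of order $M^2/\log(1/\delta)$ on a disc of radius $\delta$. On the nonempty open set where $h_l<-\varepsilon-\abs{h_i-h_k}$, both $\tilde g_i$ and $\tilde g_k$ are then positive. So $U_i\cap U_k\ne\varnothing$, and the contradiction with the spherical partition theorem is lost. The shift by $\varepsilon$ does not help.

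The repair is the one the paper makes: replace each $h_i$ by $(h_i)_+$. This function is still Lipschitz and still converges to $(g_i)_+=g_i$ in $H^1$, by continuity of the positive part. With all $h_j\ge0$, $\tilde g_i>0$ gives $h_i>h_k$ and $\tilde g_k>0$ gives $h_k>h_i$, so the positivity sets are disjoint. The rest of your argument then goes through unchanged. The extra $\varepsilon$-shift becomes unnecessary, since the $\delta$-truncation already gives $\tilde g_i\in H^1_0(U_i)$. It is harmless, though, because constants lie in $H^1(\Sphere)$.
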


\begin{proof}
Choose smooth approximations to the $g_i$ in $H^1(\Sphere)$ and take their positive
parts, obtaining nonnegative continuous functions $b_{i,n}\to g_i$ in
$H^1(\Sphere)$.  Set
\[
 w_{i,n}=\Bigl(b_{i,n}-\sum_{j\ne i}b_{j,n}\Bigr)_+.
\]
The $w_{i,n}$ are continuous and have pairwise disjoint positivity sets.
The continuity of the positive-part operation in $H^1$ implies
$w_{i,n}\to g_i$ in $H^1$, since
$(g_i-\sum_{j\ne i}g_j)_+=g_i$.

Let $U_{i,n}=\{w_{i,n}>0\}$.  Then $w_{i,n}\in H_0^1(U_{i,n})$:
indeed, $(w_{i,n}-\varepsilon)_+$ has compact support in $U_{i,n}$ and
converges to $w_{i,n}$ in $H^1$ as $\varepsilon\downarrow0$.
A function in $H^1$ with compact support in an open set can be approximated
there by smooth compactly supported functions, so these truncations
belong to $H^1_0(U_{i,n})$.
If all three quotients in \eqref{ho:sphere-functional} were less than
$\mu$, the same would hold for $w_{i,n}$ for large $n$.  Thus the three
disjoint open sets $U_{i,n}$ would have first eigenvalues below $\mu$.
When one of these sets is disconnected, the decomposition of the
Dirichlet form over its components supplies a connected component with
first eigenvalue below $\mu$.  This contradicts
Theorem~\ref{thm:spherical-input}.
\end{proof}

\subsection{Measurable supports and equipartition}

For a measurable $A\subset\Sphere$, set
\begin{equation}\label{eq:spherical-relaxed-space}
 \mathcal H(A)=\set{w\in H^1(\Sphere)\colon w=0\text{ a.e. on }\Sphere\setminus A},
 \qquad
 \widehat\lambda_1(A)=
 \inf_{0\ne w\in\mathcal H(A)}\RS(w),
\end{equation}
with $\widehat\lambda_1(A)=+\infty$ when $\mathcal H(A)=\set{0}$.
The space $\mathcal H(A)$ is a closed subspace of $H^1(\Sphere)$.
Even for an open set $A$, this space need not equal $H^1_0(A)$:
removing a smooth interior arc of zero area does not change an
almost-everywhere support condition, but the approximations defining
$H^1_0(A)$ impose zero trace on that arc.

For a measurably disjoint triple $\mathcal A=(A_1,A_2,A_3)$, meaning
$\measure{A_i\cap A_j}=0$ for $i\ne j$, define
\[
 \widehat\Lambda(\mathcal A)=\max_i\widehat\lambda_1(A_i),\qquad
 \widehat{\partenergy}_3(\Sphere)=\inf_{\mathcal A}\widehat\Lambda(\mathcal A).
\]
Here $\measure{\cdot}$ denotes surface area, and the sets need not cover
the sphere. This is the measurable-support relaxation
of~\cite[Definition~3.1 and Remark~3.3, pp.~109--110]{HHTnodal}.

\begin{theorem}[Relaxed spherical optimum and equipartition]
\label{thm:spherical-equipartition}
The measurable-support and open-partition optima agree:
\begin{equation}\label{eq:spherical-relaxed-input}
 \widehat{\partenergy}_3(\Sphere)=\partenergy_3(\Sphere)=\mu.
\end{equation}
Every minimizing measurable triple is an equipartition:
\begin{equation}\label{eq:relaxed-equipartition}
 \widehat\Lambda(\mathcal A)=\mu
 \implies
 \widehat\lambda_1(A_i)=\mu\quad(i=1,2,3).
\end{equation}
\end{theorem}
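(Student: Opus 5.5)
We prove the two assertions separately. The equality $\widehat{\partenergy}_3(\Sphere)=\partenergy_3(\Sphere)$ needs two inequalities; the second value is $\mu$ by Theorem~\ref{thm:spherical-input}.

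\emph{Relaxed value.} The inequality $\widehat{\partenergy}_3(\Sphere)\le\mu$ comes from the open lunes $E_i$. Their ground states lie in $H^1_0(E_i)\subset\mathcal H(E_i)$, so $\widehat\lambda_1(E_i)\le\mu$.

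For the reverse inequality, take a measurably disjoint triple with $\widehat\Lambda(\mathcal A)<\mu$.
\begin{itemize}
\item Choose $0\ne w_i\in\mathcal H(A_i)$ with $\RS(w_i)<\mu$.
\item Replace $w_i$ by $\abs{w_i}$. This is still in $\mathcal H(A_i)$ and has the same Rayleigh quotient.
\item The functions $\abs{w_i}$ are nonnegative, nonzero and segregated, because the $A_i$ overlap only in null sets.
\item Lemma~\ref{lem:spherical-functional} then says that some $\RS(\abs{w_i})\ge\mu$, a contradiction.
\end{itemize}
Hence $\widehat{\partenergy}_3(\Sphere)=\mu$.

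\emph{Equipartition.} Let $\widehat\Lambda(\mathcal A)=\mu$. We argue by contradiction and suppose $\widehat\lambda_1(A_1)<\mu$, with $\widehat\lambda_1(A_2),\widehat\lambda_1(A_3)\le\mu$. The plan is to perturb the triple so that all three first eigenvalues drop strictly below $\mu$, which contradicts the relaxed value just proved.

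The standard HHT equipartition argument enlarges the cells $A_2$ and $A_3$ at the expense of $A_1$. Since the sets are only measurable, we instead work directly with functions.
\begin{enumerate}
\item Take nonnegative minimizers $w_i$ of $\widehat\lambda_1(A_i)$. They exist by compactness of $H^1(\Sphere)\hookrightarrow L^2$ and closedness of $\mathcal H(A_i)$.
\item Pick a spherical cap $K$ with $\measure{K\cap\{w_2>0\}}>0$.
\item For small $\varepsilon>0$, form the new functions
\[
 \tilde w_1=(w_1-\varepsilon\psi)_+,
\]
where $\psi$ is a suitable bump. The loss of energy stays below the gap $\mu-\widehat\lambda_1(A_1)$, by continuity of $\RS$ in $H^1$.
\item Use the freed region to push down $\RS$ of $w_2$ and $w_3$.
\end{enumerate}

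\emph{Main obstacle.} Step 4 is the main difficulty, since one cannot guarantee that lowering the quotient of $w_2$ is possible at all. We will try the argument of \cite[Theorem~1.12 / Section~3]{HHTnodal}, which works on measurable-support minimizers. There, first the triple is shown to be a minimizer. Then each minimizer is upgraded to a regular strong partition, and unique continuation applies on the open cells $\{w_i>0\}$. If some cell had $\widehat\lambda_1<\mu$, that cell could be slightly shrunk while its neighbours are enlarged across a common interface arc. Domain monotonicity, which is strict by unique continuation, would then give $\max<\mu$, contradicting the relaxed value.

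So we will cite the HHT regularity of measurable minimizers, \cite[Definition~3.1 and Remark~3.3]{HHTnodal}, together with their equipartition property for minimal partitions. The work then reduces to checking two things: that our triple is minimizing in their class, which follows from the first part, and that their hypotheses cover the sphere. Their proof is stated for domains in $\R^2$ or surfaces, which we expect to hold here.
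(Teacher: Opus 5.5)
Your proof of the relaxed value is correct and is the paper's argument. If $\widehat\Lambda(\mathcal A)<\mu$, the functions $\abs{w_i}$ are nonnegative, nonzero, segregated and have quotients below $\mu$, which Lemma~\ref{lem:spherical-functional} forbids.

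The equipartition part, however, has a genuine gap. Steps~3--4 cannot be carried out as written. Measurable supports have no interfaces or adjacency, so removing mass from $w_1$ creates no region into which $w_2$ and $w_3$ can spread, as you concede.

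The fallback citation does not close the gap either. Definition~3.1 and Remark~3.3 of \cite{HHTnodal} only set up the relaxed problem. What HHT establish for measurable minimizers is the system of variational inequalities of \cite[Theorem~3.4]{HHTnodal}. For nonnegative normalized ground states $\varphi_i\in\mathcal H(A_i)$, there is $a\in[0,\infty)^3\setminus\{0\}$ such that $z_i=a_i\varphi_i$ satisfy $-\Delta_{\Sphere}z_i\le\mu z_i$ and $-\Delta_{\Sphere}(z_i-\sum_{j\ne i}z_j)\ge\mu(z_i-\sum_{j\ne i}z_j)$ on $\Sphere$. Testing both with $z_i$ and using segregation gives $\int_{\Sphere}\abs{\nabla_{\Sphere}z_i}^2\dd A=\mu\int_{\Sphere}z_i^2\dd A$. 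So every cell with $a_i>0$ already has $\widehat\lambda_1(A_i)=\mu$. Hence a cell with $\widehat\lambda_1(A_1)<\mu$ must have $a_1=0$. Every regularity or interface-pushing argument built on these inequalities is blind to such a cell: nothing upgrades it to a regular open cell. The equipartition theorem for open minimal partitions therefore does not transfer to your measurable triple, and checking minimality and the spherical hypotheses is not enough.

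The missing idea is nondegeneracy of the multipliers. It comes from a global unique continuation argument, not from strict domain monotonicity. If $a_1=0$, the second inequalities for $i=2$ and $i=3$ combine into $-\Delta_{\Sphere}(z_2-z_3)=\mu(z_2-z_3)$ on the whole sphere. Since $a\ne0$ and the $\varphi_i$ are segregated, $z_2-z_3$ is nonzero. Yet it vanishes almost everywhere on $A_1$, which has positive area because $\widehat\lambda_1(A_1)<\infty$. This contradicts unique continuation for spherical eigenfunctions. (Alternatively, $\mu=15/4$ is not of the form $\ell(\ell+1)$, so this equation has no nonzero solution at all.)

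With all $a_i>0$, the testing identity above gives $\widehat\lambda_1(A_i)=\RS(\varphi_i)=\mu$ for each $i$ directly, with no regularity of the cells and no perturbation. This is the paper's proof. It applies \cite[Theorem~3.4]{HHTnodal} to $-\Delta_{\Sphere}+1$ to meet HHT's positivity convention, and notes that the spherical case is explicit in \cite[Corollary~3.6]{OgnibeneVelichkov}.
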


\begin{proof}
If $\widehat\Lambda(\mathcal A)<\infty$, compactness of
$H^1(\Sphere)\hookrightarrow L^2(\Sphere)$ gives a nonnegative normalized
ground state $\varphi_i\in\mathcal H(A_i)$ for every $i$.
These states are segregated, so Lemma~\ref{lem:spherical-functional}
gives $\widehat\Lambda(\mathcal A)\ge\mu$. The three equal lunes attain
$\mu$, proving~\eqref{eq:spherical-relaxed-input}.

Now let $\mathcal A$ be any minimizing measurable triple and choose such
$\varphi_i$. We use the variational inequalities
of~\cite[Theorem~3.4]{HHTnodal}, applied to $-\Delta_{\Sphere}+1$
to meet the positive-operator convention of its Remark~3.2.
The shift leaves the minimizers and ground states unchanged and changes
the optimal value from $\mu$ to $\mu+1$.
The proof uses compact Sobolev embeddings and variations of functions
and applies with the spherical gradient and area measure; this spherical
application is also explicit in the proof of~\cite[Corollary~3.6]{OgnibeneVelichkov}.
Thus there is a multiplier vector
$a=(a_1,a_2,a_3)\in[0,\infty)^3\setminus\{0\}$ such that
$z_i=a_i\varphi_i$ satisfy, after cancelling the added zeroth-order
terms on both sides,
\[
 -\Delta_{\Sphere}z_i\le\mu z_i,\qquad
 -\Delta_{\Sphere}\Bigl(z_i-\sum_{j\ne i}z_j\Bigr)
 \ge\mu\Bigl(z_i-\sum_{j\ne i}z_j\Bigr)
\]
in the distributional sense on $\Sphere$.

All components of $a$ are nonzero. Indeed, if $a_3=0$ after relabeling,
the second inequalities for $i=1,2$ give
$-\Delta_{\Sphere}(z_1-z_2)=\mu(z_1-z_2)$.
Segregation and $a\ne0$ imply that $z_1-z_2$ is nonzero, but it vanishes
almost everywhere on $A_3$, which has positive area. This contradicts
unique continuation for spherical eigenfunctions, as in the cited proof
of Corollary~3.6.

Finally, test both inequalities with $z_i$. Segregation gives
$z_i z_j=0$ and $\nabla_{\Sphere}z_i\cdot\nabla_{\Sphere}z_j=0$
almost everywhere for $i\ne j$, so the two inequalities yield
$\int_{\Sphere}\abs{\nabla_{\Sphere}z_i}^2\dd A
=\mu\int_{\Sphere}z_i^2\dd A$.
This is the equality for nonzero multipliers noted
in~\cite[Remark~3.14(d)]{HHTnodal}. Since $a_i>0$, it gives
$\widehat\lambda_1(A_i)=\RS(\varphi_i)=\mu$ for every $i$.
\end{proof}

\subsection{Saturation for segregated functions}

Equipartition concerns the lowest eigenvalues of the supports, whereas
we need a statement about the Rayleigh quotients of the original
functions. The following variational argument connects them. The functions
need not be ground states.

\begin{proposition}\label{prop:saturation}
Let $v_1$, $v_2$, $v_3$ be nonnegative, nonzero, pairwise segregated
functions in $H^1(\Sphere)$. Then
\begin{equation}\label{eq:spherical-functional-lower}
 \max_i\RS(v_i)\ge\mu.
\end{equation}
If $\RS(v_i)\le\mu$ for all three indices, then
\[
 \RS(v_1)=\RS(v_2)=\RS(v_3)=\mu.
\]
Equivalently, the three shifted forms cannot all be negative, and if they
are all nonpositive, they are all zero.
\end{proposition}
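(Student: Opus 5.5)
The first assertion \eqref{eq:spherical-functional-lower} is exactly Lemma~\ref{lem:spherical-functional}, so only the saturation statement needs work. Assume $\RS(v_i)\le\mu$ for $i=1,2,3$. The plan is to pass from the functions to their positivity sets, where Theorem~\ref{thm:spherical-equipartition} applies.

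For each $i$, set $A_i=\set{v_i>0}$. This set is defined up to a null set and is measurable. Segregation gives $\measure{A_i\cap A_j}=0$ for $i\ne j$, so $\mathcal A=(A_1,A_2,A_3)$ is a measurably disjoint triple. Each $v_i$ belongs to $\mathcal H(A_i)$. Hence
\[
 \widehat\lambda_1(A_i)\le\RS(v_i)\le\mu ,
\]
so $\widehat\Lambda(\mathcal A)\le\mu$. By \eqref{eq:spherical-relaxed-input} we also have $\widehat\Lambda(\mathcal A)\ge\mu$. Therefore $\mathcal A$ is a minimizing measurable triple, and \eqref{eq:relaxed-equipartition} gives $\widehat\lambda_1(A_i)=\mu$ for every $i$. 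Combining this with the chain above,
\[
 \mu=\widehat\lambda_1(A_i)\le\RS(v_i)\le\mu ,
\]
which yields $\RS(v_i)=\mu$ for all three indices.

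The step needing care is the membership $v_i\in\mathcal H(A_i)$ together with the minimality of $\mathcal A$. Membership is immediate from the definition \eqref{eq:spherical-relaxed-space}, since $v_i=0$ almost everywhere off $A_i$ and $v_i\in H^1(\Sphere)$. This is precisely why the measurable-support relaxation is used rather than $H^1_0$ of an open set: $A_i$ need not be open. The infimum $\widehat{\partenergy}_3(\Sphere)=\mu$ holds over all measurably disjoint triples with no covering requirement, so $\mathcal A$ is admissible. Equipartition then applies directly. No real obstacle remains, since all the analytic content sits in Theorem~\ref{thm:spherical-equipartition}.

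The equivalent formulation follows from the identity $\QS(v)=\bigl(\RS(v)-\mu\bigr)\norm{v}_{L^2(\Sphere)}^2$ with $\norm{v_i}_{L^2(\Sphere)}>0$. The sign of $\QS(v_i)$ is the sign of $\RS(v_i)-\mu$, so the two statements translate into each other.
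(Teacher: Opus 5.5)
Your proposal is correct and follows the paper's proof essentially verbatim: you take the positivity sets $A_i=\set{v_i>0}$ as a measurably disjoint triple, obtain minimality from $\widehat\lambda_1(A_i)\le\RS(v_i)\le\mu$ together with \eqref{eq:spherical-relaxed-input}, and apply equipartition to close the sandwich $\mu=\widehat\lambda_1(A_i)\le\RS(v_i)\le\mu$. The only cosmetic difference is in the lower bound \eqref{eq:spherical-functional-lower}. You cite Lemma~\ref{lem:spherical-functional} directly, whereas the paper reads the bound off the same chain through $\widehat\lambda_1(A_i)$, which ultimately rests on that lemma anyway.
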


\begin{proof}
Choose measurable representatives and set $A_i=\set{v_i>0}$. Segregation
makes these sets measurably disjoint, and $v_i\in\mathcal H(A_i)$ gives
\[
 \widehat\lambda_1(A_i)\le\RS(v_i).
\]
By \eqref{eq:spherical-relaxed-input},
\[
 \mu\le\max_i\widehat\lambda_1(A_i)\le\max_i\RS(v_i),
\]
which proves \eqref{eq:spherical-functional-lower}.

If every $\RS(v_i)\le\mu$, the same chain shows that
$(A_1,A_2,A_3)$ is a relaxed minimizing partition. Equipartition
\eqref{eq:relaxed-equipartition} then gives
$\widehat\lambda_1(A_i)=\mu$ for each $i$. Consequently,
\[
 \mu=\widehat\lambda_1(A_i)\le\RS(v_i)\le\mu,
\]
and every quotient equals $\mu$.
\end{proof}

\section{Equality and uniqueness}
\label{sec:proofs}

Optimality for open partitions was proved in Section~\ref{sec:optimality}.
We now prove the stronger functional statement of Theorem~\ref{thm:functional}
and use its equality case to classify all minimizing open partitions.

\subsection{The functional lower bound and equality of the forms}

Let $(u_1,u_2,u_3)$ be a segregated triple in $H^1_0(\Disk)$. By
Proposition~\ref{prop:full-comparison}, the functions $Tu_i$ are
nonnegative, nonzero, and segregated in $H^1(\Sphere)$, and
\begin{equation}\label{eq:transplanted-chain}
 \QS(Tu_i)\le c\QB(u_i)
  =c\bigl(\RB(u_i)-L\bigr)\norm{u_i}_{L^2(\Disk)}^2 ,
 \qquad i=1,2,3.
\end{equation}

If $\max_i\RB(u_i)<L$, then $\QS(Tu_i)<0$ for each $i$, that is,
$\RS(Tu_i)<\mu$, contradicting \eqref{eq:spherical-functional-lower}. This
proves \eqref{eq:functional-bound}. The three sector ground states
\eqref{eq:sector-groundstates} form a segregated triple with Rayleigh
quotient $L$, so the bound is sharp.

Assume now that $\max_i\RB(u_i)=L$. Then $\QB(u_i)\le0$ for each $i$, and
\eqref{eq:transplanted-chain} gives $\QS(Tu_i)\le0$, that is,
$\RS(Tu_i)\le\mu$. Proposition~\ref{prop:saturation} yields
$\QS(Tu_i)=0$ for all three indices, so the chain
$0=\QS(Tu_i)\le c\QB(u_i)\le0$ consists entirely of equalities:
\begin{equation}\label{eq:all-form-equalities}
 \QB(u_i)=\QS(Tu_i)=0,
 \qquad c\QB(u_i)-\QS(Tu_i)=0,
 \qquad i=1,2,3.
\end{equation}

\subsection{Equality: separation of variables}

By item~\ref{item:rigidity} of Proposition~\ref{prop:full-comparison},
\begin{equation}\label{eq:separated-triple}
 u_i(r,\phi)=R(r)f_i(\phi),\qquad f_i\in H^1(\Circle).
\end{equation}
Since $R>0$ in $(0,1)$, the functions $f_i$ are nonnegative, nonzero,
and pairwise segregated.

To identify the angular factors, we use the radial energy identity
\eqref{eq:radial-integrated-identity}, proved in
Section~\ref{sec:angular-matching} while determining the constant $c$.
All its integrals are finite by \eqref{eq:R-origin}--\eqref{eq:R-boundary}.
On each annulus $a<r<b$, where $R$ is smooth, the polar derivatives of
$u_i=Rf_i$ are $\partial_ru_i=R'f_i$ and $\partial_\phi u_i=Rf_i'$.
Fubini's theorem and monotone convergence as the annuli exhaust $\Disk$ give
\begin{align*}
 \int_{\Disk}\norm{\nabla u_i}^2\dd x
 &=\int_0^1r(R')^2\dd r\int_0^{2\pi}f_i^2\dd\phi
  +I\int_0^{2\pi}\abs{f_i'}^2\dd\phi ,\\
 \int_{\Disk}u_i^2\dd x&=\int_0^1rR^2\dd r\int_0^{2\pi}f_i^2\dd\phi ,
\end{align*}
and \eqref{eq:radial-integrated-identity} gives
\[
 \QB(Rf_i)=I\left(\int_0^{2\pi}\abs{f_i'}^2\dd\phi
                   -\nu^2\int_0^{2\pi}f_i^2\dd\phi \right).
\]
Since $\QB(u_i)=0$ and $I>0$,
\begin{equation}\label{eq:angular-equalities}
 \int_0^{2\pi}\abs{f_i'}^2\dd\phi
       =\frac94\int_0^{2\pi}f_i^2\dd\phi ,
       \qquad i=1,2,3.
\end{equation}
The equality case is now a one-dimensional problem on the circle.

\subsection{Angular rigidity}

We use the Dirichlet Poincar\'e inequality on an
interval. The first Dirichlet eigenvalue of $(0,\ell)$ is
$\pi^2/\ell^2$, and its eigenspace is spanned by
$\sin(\pi t/\ell)$. Hence, by the variational characterization,
\begin{equation}\label{eq:interval-poincare}
 \int_0^\ell \abs{f'}^2\dd t
 \ge \frac{\pi^2}{\ell^2}\int_0^\ell f^2\dd t,
 \qquad f\in H^1_0(0,\ell).
\end{equation}
Equality holds precisely for scalar multiples of
$\sin(\pi t/\ell)$; for a nonzero nonnegative function, the
scalar is positive.

\begin{lemma}\label{lem:angular-rigidity}
Let $f_1,f_2,f_3\in H^1(\Circle)$ be nonnegative, nonzero, and pairwise
segregated. If \eqref{eq:angular-equalities} holds for
$i=1,2,3$, then,
after relabeling, their positivity sets are three consecutive angular
intervals $(\alpha_i,\alpha_i+2\pi/3)$, with angles interpreted modulo
$2\pi$. On each such interval, the corresponding function is a positive
multiple of $\sin\bigl(\tfrac32(\phi-\alpha_i)\bigr)$.
\end{lemma}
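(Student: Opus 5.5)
Since each $f_i$ lies in $H^1(\Circle)$, it is continuous. Its positivity set $U_i=\set{f_i>0}$ is therefore an open subset of the circle, and it is nonempty because $f_i\ne0$. Segregation together with continuity makes the three sets $U_1,U_2,U_3$ pairwise disjoint. None of them is the whole circle: if $U_1=\Circle$, then $f_2=0$ almost everywhere, contradicting $f_2\ne0$. Each $U_i$ is therefore a countable disjoint union of open arcs $I_{i,k}$, of lengths $\ell_{i,k}$, and $f_i$ vanishes at the endpoints of every such arc.

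The restriction of $f_i$ to an arc $I_{i,k}$ lies in $H^1_0(I_{i,k})$. Applying \eqref{eq:interval-poincare} on each arc gives
\[
 \int_{I_{i,k}}\abs{f_i'}^2\dd\phi\ \ge\ \frac{\pi^2}{\ell_{i,k}^2}\int_{I_{i,k}}f_i^2\dd\phi .
\]
Since $f_i=0$ off $U_i$, its derivative vanishes almost everywhere there. Summing over $k$ and comparing with \eqref{eq:angular-equalities}, we get
\[
 \frac94\int f_i^2 \;\ge\; \sum_k \frac{\pi^2}{\ell_{i,k}^2}\int_{I_{i,k}} f_i^2 .
\]
Some arc $I_{i,k}$ carrying positive mass must therefore satisfy $\pi^2/\ell_{i,k}^2\le 9/4$, that is, $\ell_{i,k}\ge 2\pi/3$.

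The three index sets give three pairwise disjoint arcs, one for each $i$, each of length at least $2\pi/3$. Their total length is at most $2\pi$, so each has length exactly $2\pi/3$ and together they fill the circle up to three endpoints. Hence $U_i$ is this single arc, and no other components of $U_i$ exist, since any further component would need positive length in a set of measure zero. With $\ell=2\pi/3$ we have $\pi^2/\ell^2=9/4$, so equality holds in \eqref{eq:interval-poincare}. The equality case of that inequality identifies $f_i$ on the arc as a positive multiple of $\sin(\tfrac32(\phi-\alpha_i))$. The arcs are consecutive, so after relabeling $\alpha_i=\alpha_1+2\pi(i-1)/3$.

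The main point to handle carefully is the step where the equality must hold on every component and every arc must have length $\ge2\pi/3$. I would set it up cleanly as follows. Write $\lambda_{i,k}=\pi^2/\ell_{i,k}^2$ and $m_{i,k}=\int_{I_{i,k}}f_i^2$. Then \eqref{eq:angular-equalities} gives $\sum_k(\lambda_{i,k}-9/4)\,m_{i,k}\le0$, with all $m_{i,k}>0$, so $\min_k\lambda_{i,k}\le 9/4$. This yields an arc of length $\ge2\pi/3$ for each $i$. The length count then forces every other component of every $U_i$ to be empty, so each $U_i$ is a single arc of length $2\pi/3$ and the inequality is an equality.
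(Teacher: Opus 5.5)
Your proof is correct and follows essentially the same route as the paper: continuous representatives, the decomposition of the open positivity sets into arcs, the interval Poincar\'e inequality \eqref{eq:interval-poincare} on each arc, the total-length count $2\pi$, and the equality case. The only difference is in the bookkeeping. You extract one component of length at least $2\pi/3$ from $\sum_k(\lambda_{i,k}-9/4)m_{i,k}\le0$ and let the length count exclude any further components. The paper instead bounds every $\ell_{ij}$ by the total measure $m_i$, deduces $m_i=2\pi/3$, and then uses strictness in \eqref{eq:angular-support-estimate} to rule out a second component.
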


\begin{proof}
Every $H^1$ function on the circle has an absolutely continuous
representative, and we use these representatives. Their positivity sets
\[
 E_i=\set{\phi\colon f_i(\phi)>0}
\]
are nonempty, open, and pairwise disjoint, since an overlap would contain
an interval of positive measure. No $E_i$ is the whole circle.

Write $m_i=\measure{E_i}$ for its angular length. The set $E_i$ is the disjoint union of at most
countably many open intervals $\mathcal I_{ij}$, of lengths $\ell_{ij}$. The
restriction of $f_i$ to each such interval belongs to $H^1_0(\mathcal I_{ij})$
because it is in $H^1$ and its endpoint values vanish by continuity. Since an
absolutely continuous function has derivative zero almost everywhere on each
level set, the derivative contributes nothing on the zero set. Applying
\eqref{eq:interval-poincare} on each component gives
\begin{equation}
 \int_{\Circle}\abs{f_i'}^2\dd\phi
 \ge\sum_j\int_{\mathcal I_{ij}}\abs{f_i'}^2\dd\phi
 \ge\sum_j\frac{\pi^2}{\ell_{ij}^2}
                   \int_{\mathcal I_{ij}}f_i^2\dd\phi
 \ge\frac{\pi^2}{m_i^2}\int_{\Circle}f_i^2\dd\phi .
 \label{eq:angular-support-estimate}
\end{equation}
The last step uses $\ell_{ij}\le m_i$ and $f_i=0$ outside $E_i$.
Combining \eqref{eq:angular-support-estimate} with
\eqref{eq:angular-equalities} yields $m_i\ge2\pi/3$.
Since the three sets are disjoint,
\[
 2\pi\ge m_1+m_2+m_3\ge2\pi.
\]
Hence $m_i=2\pi/3$ for each $i$, and equality holds throughout
\eqref{eq:angular-support-estimate}.

Each $E_i$ has only one component. If it had more than one, every component
would have length strictly less than $m_i$, and on any component the integral
of $f_i^2$
is positive, so at least one term in the last inequality of
\eqref{eq:angular-support-estimate} would be strictly larger than
its lower bound.

Each $E_i$ is therefore a single interval of length $2\pi/3$, and the equality
case of \eqref{eq:interval-poincare} gives the stated sine profile. Three
disjoint intervals of that length leave no angular gap of positive length, so
in circular order their endpoints coincide successively, and they are the
consecutive intervals in~\eqref{eq:Y-sectors} for some initial angle $\alpha$.
\end{proof}

\begin{proof}[Proof of Theorem~\ref{thm:functional}]
The lower bound \eqref{eq:functional-bound} was proved in the first
subsection. If $\max_i\RB(u_i)=L$, then \eqref{eq:all-form-equalities}
holds, \eqref{eq:separated-triple} and \eqref{eq:angular-equalities}
follow, and Lemma~\ref{lem:angular-rigidity} identifies the $u_i$ with
the sector ground states \eqref{eq:sector-groundstates}. Conversely, the
sector ground states form a segregated triple with Rayleigh quotient $L$.
\end{proof}

\subsection{From ground states to cells}
\label{sec:partitions}

Let $(D_1,D_2,D_3)$ be a minimizing open three-partition of the disc. Each
$\lambda_1(D_i)$ is attained by a nonnegative ground state
$u_i\in H^1_0(D_i)$. A normalized minimizing sequence is bounded in
$H^1_0(D_i)$; Rellich's theorem gives a subsequence converging weakly in
$H^1_0(D_i)$ and strongly in $L^2(D_i)$. Lower semicontinuity shows that its
limit is a minimizer, and taking the absolute value does not change the
Rayleigh quotient. By interior elliptic regularity and the strong maximum
principle, $u_i$ is continuous and strictly positive in the connected open set
$D_i$. The three zero extensions form a segregated triple. By
Section~\ref{sec:optimality} and the minimizing property of the partition,
\[
 \max_i\RB(u_i)=\max_i\lambda_1(D_i)=L.
\]
The equality case of Theorem~\ref{thm:functional} therefore gives, after
rotation and relabeling, $u_i=M_iU_i$, where $U_i$ is the ground state of
the sector $\sector_i(\alpha)$ in~\eqref{eq:sector-groundstates}. This proves
uniqueness in the sense of ground state equivalence used in~\cite{HHTnodal}.
It remains to pass from the ground states to the cells.

Almost-everywhere equality of zero-extended ground states does not
identify their open domains pointwise. For arbitrary cells, we prove
equality of Dirichlet form domains, using the strict positivity of the
sector ground state.

\begin{lemma}\label{lem:form-domain-uniqueness}
Suppose that $D\subset\Disk$ is connected and open, and let $u$ be a
nonzero nonnegative Dirichlet ground state of $D$. If the zero extension of
$u$ agrees almost everywhere with a positive multiple $U$ of the
zero-extended ground state of a sector $\sector$ of a Y-partition, then
\[
 D\subset\sector,
 \qquad H^1_0(D)=H^1_0(\sector),
\]
where both spaces are regarded as subspaces of $H^1_0(\Disk)$ by zero
extension.
\end{lemma}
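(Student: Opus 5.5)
The proof has two parts: the inclusion $D\subset\sector$, obtained from continuity and strict positivity of $u$, and the equality of form domains, obtained from a cutoff approximation.

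\emph{Inclusion.} By the discussion preceding the lemma, $u$ is continuous and strictly positive on the connected open set $D$. The function $U$ is continuous on $\Disk$, strictly positive on $\sector$, and zero on $\Disk\setminus\sector$. Suppose some $x\in D$ lies outside $\sector$. Since $D$ is open, a small disc $B_\delta(x)\subset D$ exists on which $u>0$. Almost-everywhere equality then forces $U>0$ almost everywhere on $B_\delta(x)$. Hence $B_\delta(x)\setminus\overline{\sector}$ is null. But $\partial\sector$ is a union of two segments and an arc, so this forces $B_\delta(x)\subset\overline{\sector}$. Therefore $x\in\overline{\sector}\cap D$, and $x$ lies on $\partial\sector$, namely on one of the two radial segments or at the center. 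I will use the continuous representative of $u$ on $D$. Since $U=0$ on the segment, continuity of $u$ at $x$ together with $u=U$ a.e. near $x$ gives $u(x)=\lim U=0$, contradicting $u(x)>0$. Hence $D\subset\sector$, and trivially $H^1_0(D)\subset H^1_0(\sector)$.

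\emph{Reverse inclusion of form domains.} The reverse inclusion $H^1_0(\sector)\subset H^1_0(D)$ is the main obstacle, since $D$ may differ from $\sector$ by a closed null set $K=\sector\setminus D$. The closed set $K$ is relatively closed in $\sector$, and $u=U>0$ a.e. on $\sector$. I would show that every $\varphi\in C_c^\infty(\sector)$ lies in $H^1_0(D)$. The plan is to use $U$ itself as a cutoff, in the following steps.
\begin{enumerate}
\item Since $u\in H^1_0(D)$ and $u=U$ a.e., we have $U\in H^1_0(D)$.
\item For $\varepsilon>0$, set $\psi_\varepsilon=\min(U/\varepsilon,1)$. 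This function lies in $H^1_0(D)$ because $H^1_0(D)$ is a lattice closed under truncations (standard Stampacchia arguments).
\item For $\varphi\in C_c^\infty(\sector)$ the product $\varphi\psi_\varepsilon$ lies in $H^1_0(D)$.
\item On $\supp\varphi$, the function $U$ is bounded below by a positive constant $\delta$, since $U$ is continuous and positive on the compact set $\supp\varphi\subset\sector$. So for $\varepsilon<\delta$ we get $\psi_\varepsilon=1$ there, and $\varphi\psi_\varepsilon=\varphi$.
\end{enumerate}
Hence $\varphi\in H^1_0(D)$, and by density $H^1_0(\sector)\subset H^1_0(D)$. No limiting argument is needed, since the cutoff becomes exact once $\varepsilon<\delta$.

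The step needing care is the claim that multiplying an element of $H^1_0(D)$ by a smooth compactly supported function stays in $H^1_0(D)$. This holds by approximating $\psi_\varepsilon$ by elements of $C_c^\infty(D)$ and noting that multiplication by $\varphi\in C_c^\infty$ is continuous in $H^1$. Similarly, closure of $H^1_0(D)$ under $\min(\cdot/\varepsilon,1)$ follows from continuity of truncation in $H^1$ applied to smooth approximants of $U$.
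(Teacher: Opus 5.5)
Your proof is correct. The inclusion $D\subset\sector$ is essentially the paper's argument: two continuous functions that agree almost everywhere on an open set agree at every point, so $U=u>0$ on $D$. Your detour through $\overline{\sector}$ and the boundary segments is harmless but unnecessary, since your final continuity step applies to any $x\in D$ with $U(x)=0$.

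For the reverse inclusion you take a genuinely different route. The paper divides rather than truncates. Because $U$ is smooth and positive in $\sector$, the quotient $a=\varphi/U$, extended by zero, lies in $C_c^\infty(\Disk)$. If $u_n\in C_c^\infty(D)$ converge to $U$, then $au_n\in C_c^\infty(D)$ converge to $aU=\varphi$. This uses only that multiplication by a fixed smooth function is bounded on $H^1$.

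You instead build the cutoff $\min(U/\varepsilon,1)\in H^1_0(D)$. It equals $1$ on $\supp\varphi$ once $\varepsilon<\min_{\supp\varphi}U$, and you multiply it by $\varphi$. This needs the extra standard fact that $H^1_0(D)$ is stable under truncation, via continuity of $g\mapsto g^+$ in $H^1$ or weak closedness of $H^1_0(D)$. In exchange, it uses only a positive lower bound for $U$ on $\supp\varphi$, not smoothness of $U$ in $\sector$. It therefore applies verbatim to any nonnegative element of $H^1_0(D)$ that is essentially bounded below by a positive constant on each compact subset of $\sector$.

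Both arguments rest on the same two inputs: $U\in H^1_0(D)$, and the positive lower bound of $U$ on compact subsets of $\sector$.
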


\begin{proof}
The zero-extended sector function $U$ is continuous on $\Disk$, positive
exactly on $\sector$, and smooth in $\sector$. The ground state $u$ of
$D$ is continuous and strictly positive in $D$. Since the two continuous
functions agree almost everywhere in $D$, they agree at every point of
$D$. Indeed, a nonzero difference at a point would persist on a
neighborhood of positive area. Therefore $D\subset\sector$, and
the inclusion of the smooth test-function spaces gives
\[
 H^1_0(D)\subset H^1_0(\sector).
\]

For the reverse inclusion, it suffices to show that every
$\varphi\in C_c^\infty(\sector)$ belongs to $H^1_0(D)$. Because $U$ is
smooth and strictly positive in $\sector$, the function
\[
 a(x)=
 \begin{cases}
 \varphi(x)/U(x),&x\in\sector,\\
 0,&x\in\Disk\setminus\sector,
 \end{cases}
\]
belongs to $C_c^\infty(\Disk)$. Indeed, $\varphi$ vanishes in a
neighborhood of the sector boundary, and $U$ is bounded away from zero
on the compact support of $\varphi$.

The equality of zero-extended ground states means that
$U\in H^1_0(D)$, so choose $u_n\in C_c^\infty(D)$ whose zero
extensions converge to $U$ in $H^1_0(\Disk)$. Multiplication by the fixed
smooth function $a$ is continuous on $H^1(\Disk)$: the product rule
and boundedness of $a$ and $\nabla a$ give
\[
 \norm{az}_{H^1(\Disk)}\le C_a\norm{z}_{H^1(\Disk)}.
\]
Consequently,
\[
 au_n\to aU=\varphi
 \quad\text{in }H^1(\Disk).
\]
Each $au_n$ lies in $C_c^\infty(D)$, since multiplication does not
enlarge its support. Thus $\varphi\in H^1_0(D)$. Taking the closure of
$C_c^\infty(\sector)$ in $H^1_0(\Disk)$ proves the reverse inclusion
and hence equality of the two form domains.
\end{proof}

Applying Lemma~\ref{lem:form-domain-uniqueness} to the three ground states of
a minimizing partition gives, after rotation and relabeling,
\eqref{eq:cell-form-domains}, which proves the general equality statement in
Theorem~\ref{thm:main}. As a consequence, the entire Dirichlet spectrum is
preserved, not only the first eigenvalue. Indeed, the admissible subspaces in
the min--max principle are the same, and both the gradient energy and the
$L^2$ norm are evaluated after zero extension to $\Disk$. Therefore, counting
multiplicities,
\[
 \lambda_k(D_i)=\lambda_k\bigl(\sector_i(\alpha)\bigr),
 \qquad k\ge1.
\]

Equivalently, $\sector_i(\alpha)\setminus D_i$ has zero $H^1$-capacity:
the quasi-continuous representative of $U_i\in H^1_0(D_i)$ vanishes
quasi-everywhere outside $D_i$, while $U_i$ is continuous and strictly
positive inside its sector. Conversely, deleting a relatively closed set
of zero capacity leaves $H^1_0$ unchanged; see~\cite[Remark~4.2(4) and
Theorem~4.6]{KilpelainenKinnunenMartio2000}.

\subsection{Regular strong representatives}
\label{sec:regular-representatives}
Assume now that $(D_1,D_2,D_3)$ is a regular strong minimizing partition. We show
that the form-domain identification improves to $D_i=\sector_i(\alpha)$ as an
equality of open sets. Suppose, to the contrary, that
$p\in\sector_i(\alpha)\setminus D_i$. The point $p$ cannot belong to any other
cell, since every $D_j$ is contained in its corresponding sector and the sectors
are pairwise disjoint. Since the partition is strong, $p$ lies in the interface
set.

Because $\sector_i(\alpha)$ is open, a small neighborhood of $p$ is
contained in this sector. The interface is a finite union of smooth arcs
and vertices, with no isolated punctures. If $p$ lies on an arc,
we can choose a nontrivial compact smooth subarc in this neighborhood;
if $p$ is a vertex, we can do the same on one of its incident arcs,
away from the vertex. In either case there is a smooth subarc of positive length with
\[
 \Gamma\Subset\sector_i(\alpha),\qquad \Gamma\cap D_i=\varnothing.
\]

Absorb the positive normalizing constant into the sector ground state
$U_i$. By the form-domain equality, $U_i\in H^1_0(D_i)$, and there
are $w_n\in C_c^\infty(D_i)$ converging to $U_i$ in $H^1_0(\Disk)$.
For each $n$, the compact sets $\Gamma$ and $\supp w_n$ are disjoint, so
$w_n$ vanishes on a neighborhood of $\Gamma$ and has zero trace there. Choose
a tubular neighborhood $N\Subset\sector_i(\alpha)$ of $\Gamma$, avoiding all
vertices. The local trace estimate
\[
 \norm{\Trace_{\Gamma}z}_{L^2(\Gamma)}
 \le C\norm{z}_{H^1(N)}
\]
shows that $\Trace_{\Gamma}U_i=0$.

On the other hand, $U_i$ is smooth and strictly positive on
$\Gamma\Subset\sector_i(\alpha)$, so its trace is its positive
pointwise restriction. This is a contradiction. There can therefore be
no point of $\sector_i(\alpha)\setminus D_i$, and $D_i=\sector_i(\alpha)$
for each $i$. This completes the proof of Theorem~\ref{thm:main}.

\section{General angular frequency}
\label{sec:general-nu}

The construction of Section~\ref{sec:transplantation} works for any
$\nu>0$. Set $R_\nu(r)=J_\nu(j_{\nu,1}r)$, $S_\nu(\theta)=\sin^\nu\theta$,
and
\[
 c_\nu=\frac{\int_0^\pi\sin^{2\nu-1}t\dd t}
            {\int_0^1R_\nu(t)^2\frac{\dd t}{t}},
 \qquad
 \int_0^{\Theta_\nu(r)}\sin^{2\nu-1}t\dd t
       =c_\nu\int_0^r\frac{R_\nu(t)^2}{t}\dd t.
\]
Define $T_\nu$ by replacing $R$, $S$, and $\Theta$ in~
\eqref{eq:transplant} by $R_\nu$, $S_\nu$, and $\Theta_\nu$,
respectively. The proof of Proposition~\ref{prop:stretching} applies
unchanged: the pulled-back state
$Y_{\nu,a}=\sin^\nu\vartheta_a$ satisfies
\[
 -(rY_{\nu,a}')'+\frac{\nu^2}{r}Y_{\nu,a}
 =\frac{4\nu(\nu+1)a^2}{(1+a^2r^2)^2} rY_{\nu,a},
\]
where the potential on the right-hand side is strictly decreasing. The
Wronskian argument therefore gives the unimodality of $R_\nu/Y_{\nu,a}$.
The matched integrals and the positive
tail on $(1,\infty)$ yield $c_\nu R_\nu(r)^2>\sin^{2\nu}\Theta_\nu(r)$, or
equivalently $r\Theta_\nu'(r)>\sin\Theta_\nu(r)$. The arguments in
Sections~\ref{sec:sobolev} and~\ref{sec:global} extend $T_\nu$ to a
bounded map $H^1_0(\Disk)\to H^1(\Sphere)$ satisfying
$\QS^{(\nu)}(T_\nu u)\le c_\nu\QB^{(\nu)}(u)$, where the shifted forms
have thresholds $j_{\nu,1}^2$ and $\nu(\nu+1)$.

The choice $\nu=3/2$ comes from the spherical three-partition theorem, and
the general comparison does not imply that equal sectors minimize for every
number of cells. For $\nu=2$, the four equal lunes form the nodal partition
of $\sin^2\theta \sin(2\phi)$, with eigenvalue $6$. They cannot be
minimal, because a minimal spherical $k$-partition is nodal only for
$k\le2$; see~\cite[Theorem~3.7]{HHTsphere}. Hence
$\partenergy_4(\Sphere)<6$, also stated explicitly
in~\cite[Eq.~(7.29)]{HHTsphere}. The four quarter-discs do attain
$\partenergy_4(\Disk)=j_{2,1}^2$; see~\cite[Corollary~5.6 and
Section~9]{HHTnodal}.

\section{The planar harmonic oscillator}\label{ho:sec-oscillator}

We now apply the same angular-energy matching to the harmonic oscillator
on the plane. The spherical state $S(\theta)=\sin^{3/2}\theta$ and the
constants $\nu=3/2$ and $\mu=15/4$ remain unchanged. The disc radial state
$R$ is replaced by $h(r)=r^{3/2}e^{-r^2/2}$, and the interval $(0,1)$
by $(0,\infty)$. We keep $(r,\phi)$ for planar polar coordinates and
$(\theta,\phi)$ for spherical coordinates.

\subsection{The partition problem and its functional formulation}

Recall the form $q_H$, its domain $\QH$, and the cell domains $\QH(D)$
from Section~\ref{ho:intro}. Equivalently, $\QH(D)$ consists of the zero
extensions of the functions $u\in H^1_0(D)$ with $\abs{x}u\in L^2(D)$.
This follows by cutting off at infinity and then approximating on bounded
sets, where the potential is bounded. We prove Theorem~\ref{ho:main-partition}
through the following functional statement.

For $0\ne u\in\QH$, we set $\RH(u)=q_H[u]/\norm{u}_2^2$.
We use the same convention for segregated triples as in the disc problem:
the functions are nonnegative, nonzero, and pairwise disjoint almost
everywhere.

\begin{theorem}\label{ho:main-functional}
Every segregated triple $(u_1,u_2,u_3)$ in $\QH$ satisfies
\begin{equation}\label{ho:functional-bound}
 \max_{1\le i\le3}\RH(u_i)\ge5.
\end{equation}
Equality holds if and only if, after rotation and relabeling,
\begin{equation}\label{ho:functional-equality}
 u_i(r,\phi)=
 \begin{cases}
 M_i r^{3/2}e^{-r^2/2}
      \sin \Bigl(\dfrac32(\phi-\alpha_i)\Bigr),
       &(r,\phi)\in\sector_i^\infty(\alpha),\\[2mm]
 0,&\text{otherwise},
 \end{cases}
\end{equation}
where $M_i>0$ and $\alpha_i=\alpha+2\pi(i-1)/3$.
All equalities of functions are almost everywhere.
\end{theorem}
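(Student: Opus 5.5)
The proof follows the disc argument step by step, with $R$ replaced by $h(r)=r^{3/2}e^{-r^2/2}$ on $(0,\infty)$ and threshold $L$ replaced by $5$.

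\textbf{The oscillator radial state.} First I check that $h$ solves the radial equation
\[
 -(rh')'+\Bigl(\frac{\nu^2}{r}+r^3-5r\Bigr)h=0
\]
on $(0,\infty)$. Then $hG$, with $G$ the angular sine factor on an interval of length $2\pi/3$, is a positive eigenfunction of the sector with eigenvalue $5$. The same positivity argument as in Section~\ref{sec:radial-models} shows it is the sector ground state, and this gives the upper bound.

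\textbf{The transplantation.} Set
\[
 I_H=\int_0^\infty \frac{h^2}{r}\dd r,\qquad c_H=\frac{\pi}{2I_H},
\]
and define $\Theta_H\colon(0,\infty)\to(0,\pi)$ by
\[
 \int_0^{\Theta_H(r)}\sin^2t\dd t=c_H\int_0^r\frac{h^2}{t}\dd t.
\]
Then set $T_Hu=(S(\Theta_H)/h)\,u$ composed with $\Psi_H^{-1}$. Lemma~\ref{lem:transform} applies with $a=r$ and $V=\nu^2/r+r^3-5r$. Together with the angular matching, it gives on $C_c^\infty(\R^2\setminus\{0\})$ the exact identity
\[
 c_H(q_H[u]-5\norm{u}_2^2)-\QS(T_Hu)=\int_0^{2\pi}\!\!\int_0^\infty A_H\abs{\partial_r(u/h)}^2\dd r\dd\phi,
\]
where
\[
 A_H=\frac{\sin^2\Theta_H}{\Theta_H'}\bigl(r^2\Theta_H'^2-\sin^2\Theta_H\bigr).
\]

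\textbf{Strict stretching (main obstacle).} The key step is $A_H>0$, equivalently $c_Hh^2>\sin^3\Theta_H$. I would first try the Wronskian argument from Proposition~\ref{prop:stretching} with $Y_a$. We have
\[
 W_a'=(V_a-5+r^2)\,rhY_a .
\]
The factor $V_a-5+r^2$ is no longer monotone, so unimodality of $h/Y_a$ needs more work. The direct route is to show that $\log(h/Y_a)$ has derivative
\[
 \frac{\nu}{r}-r-\frac{\nu}{r}\cdot\frac{1-a^2r^2}{1+a^2r^2}=\frac{2\nu a^2r}{1+a^2r^2}-r .
\]
This is positive then negative, or always negative, because $2\nu a^2/(1+a^2r^2)-1$ is decreasing in $r$. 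So unimodality is explicit here.

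The mass balances are
\[
 \int_0^{r_0}\frac{c_Hh^2-Y_a^2}{r}\dd r=0,\qquad \int_{r_0}^\infty\frac{c_Hh^2-Y_a^2}{r}\dd r=0 .
\]
Unlike the disc case, there is no positive tail, so the superlevel-set argument only yields $\ge$. Strictness must come from elsewhere. The ratio $\sqrt{c_H}h/Y_a$ is strictly unimodal, tends to $0$ as $r\to\infty$, and near $0$ tends to a constant. Each side of $r_0$ must contain a point where the ratio is $\ge1$ and, by strictness, a point where it is $>1$. The superlevel set $\{>1\}$ is an interval meeting both sides, so it contains $r_0$.

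I expect the degenerate possibility, namely the ratio equal to $1$ on one side of $r_0$, to be excluded by strict monotonicity. This is exactly the step the paper calls a comparison of derivatives of normalized cumulative densities.

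\textbf{Extension to $\QH$.} Next I extend $T_H$ to the form domain. Near $r=0$ the bound on $\sin\Theta_H/r$ follows as in Lemma~\ref{lem:weight-bound}. At infinity, the $L^2$ weight is
\[
 c_H\left(\frac{\sin\Theta_H}{r}\right)^2\le \frac{4c_H}{r^2},
\]
which is bounded there. For the density of a core avoiding the origin, I use Lemma~\ref{lem:punctured-core} with $V=\abs{x}^2$. The localized defect inequality and radial rigidity then follow as in Proposition~\ref{prop:full-comparison}.

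\textbf{Conclusion.} The lower bound follows from Proposition~\ref{prop:saturation} exactly as in the disc case. In the equality case, saturation makes all three defects vanish, so $u_i=h f_i$. The identity
\[
 \int_0^\infty r\bigl(h'^2+r^2h^2-5h^2\bigr)\dd r=-\nu^2I_H
\]
gives \eqref{eq:angular-equalities}, and Lemma~\ref{lem:angular-rigidity} identifies the sectors and the sine profiles in \eqref{ho:functional-equality}.
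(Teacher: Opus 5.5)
Your proposal is correct, but at the key step it takes a different route from the paper.

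\textbf{The strict radial comparison.} The paper proves $K_H>1$ (Lemma~\ref{ho:strict-comparison}) without the M\"obius family $\vartheta_a$. It passes to the quantile variable $p$, compares $I_H(p)=rF'(r)$ with $J_H(p)=\sin\theta\,G'(\theta)$, computes $I_H''=-4r^2/I_H$ and $J_H''=-3\sin^2\theta/J_H$, and excludes a nonpositive interior minimum of $I_H-J_H$ by the second-derivative test.

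You keep the disc mechanism of Proposition~\ref{prop:stretching} and replace the Wronskian by the explicit ratio
\[
 \frac{h}{Y_a}=(2a)^{-3/2}e^{-r^2/2}(1+a^2r^2)^{3/2}.
\]
Its logarithmic derivative $r\bigl(3a^2/(1+a^2r^2)-1\bigr)$ changes sign at most once. This argument is already complete as you wrote it, so the hedging is unnecessary. The ratio is real-analytic and nonconstant. Hence each of the two zero balances, on $(0,r_0)$ and on $(r_0,\infty)$, forces a point where $\sqrt{c_H}\,h/Y_a>1$. Quasi-concavity then gives $c_Hh(r_0)^2>\sin^3\Theta_H(r_0)$.

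Two of your remarks are inaccurate, though neither affects validity. The superlevel argument does not ``only yield $\ge$''. And your step is not the one the paper calls a comparison of cumulative densities; it is a genuinely different proof.

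\textbf{What each route buys.} Your version shows that the positive tail in the disc proof was inessential. It also puts both applications under one comparison mechanism. The paper's version works intrinsically with the two measures and needs no auxiliary family or matching point.

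\textbf{Smaller differences.} You extend only the localized defect inequality, as in Proposition~\ref{prop:full-comparison}. The paper instead proves the exact identity~\eqref{ho:full-defect}, with $\DefH(u)$ finite on all of $\QH$. The localized form suffices for the equality case. Finally, before invoking Lemma~\ref{lem:punctured-core} you need density of $C_c^\infty(\R^2)$ in $\QH$ (cut off at infinity, then mollify), which you did not state.
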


In particular, equality in the maximum forces all three Rayleigh
quotients to equal $5$. The proof below uses spherical saturation and
the same angular rigidity lemma as the disc proof, once the exact
oscillator defect identity has been established.

\subsection{Sector eigenfunctions and compactness}\label{ho:sec-prelim}

For $0<\omega<2\pi$, let
\[
 \sector_\omega^\infty=\{(r,\phi):r>0, 0<\phi<\omega\}.
\]

\begin{lemma}\label{ho:sector-spectrum}
The Dirichlet spectrum of $H$ on $\sector_\omega^\infty$, with
multiplicities, is
\begin{equation}\label{ho:sector-eigenvalues}
 2+\frac{2n\pi}{\omega}+4m,
 \qquad n=1,2,\ldots,\quad m=0,1,\ldots.
\end{equation}
In particular,
\begin{equation}\label{ho:sector-first}
 \lambda_H(\sector_\omega^\infty)=2+\frac{2\pi}{\omega},
\end{equation}
with positive ground state
$r^{\pi/\omega}e^{-r^2/2}\sin(\pi\phi/\omega)$.
\end{lemma}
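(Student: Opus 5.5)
We will separate variables in polar coordinates and reduce the problem to two one-dimensional self-adjoint operators, the angular Dirichlet Laplacian on $(0,\omega)$ and a radial oscillator for each angular mode.

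\emph{Angular decomposition.} The functions $g_n(\phi)=\sqrt{2/\omega}\sin(n\pi\phi/\omega)$, $n\ge1$, form an orthonormal eigenbasis of $-\dd^2/\dd\phi^2$ on $(0,\omega)$ with Dirichlet conditions, with eigenvalues $\nu_n^2$, where $\nu_n=n\pi/\omega$. For $u\in C_c^\infty(\sector_\omega^\infty)$ we expand $u(r,\phi)=\sum_n u_n(r)g_n(\phi)$ with $u_n\in C_c^\infty(0,\infty)$. Parseval then gives
\[
 q_H[u]=\sum_n\int_0^\infty\Bigl(r\abs{u_n'}^2+\frac{\nu_n^2}{r}u_n^2+r^3u_n^2\Bigr)\dd r,
 \qquad
 \norm{u}_2^2=\sum_n\int_0^\infty ru_n^2\dd r .
\]
Since $C_c^\infty(\sector_\omega^\infty)$ is a form core for $\QH(\sector_\omega^\infty)$, the Dirichlet realization of $H$ on the sector is unitarily equivalent to the direct sum over $n$ of the Friedrichs extensions of
\[
 h_\nu=-\frac{\dd^2}{\dd r^2}-\frac1r\frac{\dd}{\dd r}+\frac{\nu^2}{r^2}+r^2
\]
on $L^2((0,\infty),r\,\dd r)$, taken with $C_c^\infty(0,\infty)$ as form core and $\nu=\nu_n$.

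\emph{Radial spectrum.} We next show that for $\nu>0$ the spectrum of $h_\nu$ consists of the simple eigenvalues $2+2\nu+4m$, $m\ge0$, with eigenfunctions $r^\nu e^{-r^2/2}L_m^{(\nu)}(r^2)$. We substitute $t=r^2$ and $u=r^\nu e^{-r^2/2}p(r^2)$. This turns the eigenvalue equation into Laguerre's equation $tp''+(\nu+1-t)p'+mp=0$ with $\lambda=2+2\nu+4m$, a routine calculation. The Laguerre polynomials $L_m^{(\nu)}$ are complete in $L^2((0,\infty),t^\nu e^{-t}\dd t)$, so the functions $r^\nu e^{-r^2/2}L_m^{(\nu)}(r^2)$ are complete in $L^2(r\,\dd r)$. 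They lie in the form domain: they vanish like $r^\nu$ at the origin, so that $\int r^{-1}u^2<\infty$, and they decay like a Gaussian at infinity. A complete orthonormal family of eigenfunctions lying in the form domain determines the whole spectrum of the self-adjoint operator, so this is the full list. Combining over $n$ gives \eqref{ho:sector-eigenvalues} with multiplicities, namely $2+2\nu_n+4m=2+2n\pi/\omega+4m$. The minimum is attained only at $n=1$, $m=0$, which yields \eqref{ho:sector-first} and the stated ground state $r^{\pi/\omega}e^{-r^2/2}\sin(\pi\phi/\omega)$. Positivity of this function, together with simplicity of the lowest level, identifies it as the ground state.

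\emph{Main obstacle.} The delicate step is checking that the Friedrichs extension from $C_c^\infty(0,\infty)$ really has these eigenfunctions in its form domain, that is, that no other self-adjoint realization at $r=0$ sneaks in. This matters most for $0<\nu<1$, which is the case for large $\omega$ and includes the Y-sector value $\nu=3/2$ only marginally. We will handle it with the log cutoff $\chi_\varepsilon$ of Lemma~\ref{lem:punctured-core}. The product $\chi_\varepsilon\, r^\nu e^{-r^2/2}\sin(\pi\phi/\omega)$ converges in the $\QH$ norm. At infinity we cut off with a large-scale bump and use Gaussian decay. Together these show that each separated eigenfunction belongs to $\QH(\sector_\omega^\infty)$, which closes the argument.
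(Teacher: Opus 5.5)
Your proposal is correct and follows essentially the same route as the paper: the angular sine expansion, the substitution $r^\nu e^{-r^2/2}P(r^2)$ leading to the Laguerre equation, Laguerre completeness in $L^2(t^\nu e^{-t}\,dt)$ transferred to $L^2(r\,dr)$, and membership of the separated eigenfunctions in the Dirichlet form domain because of Gaussian decay and $\nu>0$ at the vertex, which you justify in more detail with the cutoff of Lemma~\ref{lem:punctured-core}. One small slip does not affect the argument: the Y-sector value $\nu=3/2$ does not lie in $(0,1)$ (there every $\nu_n\ge 3/2>1$, so the radial operator is already limit point at $0$), but your cutoff argument works for every $\nu>0$.
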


\begin{proof}
Expand in the angular sine basis and put $\eta=n\pi/\omega$.
The radial equation is
\[
 -F''-\frac1rF'+\Bigl(r^2+\frac{\eta^2}{r^2}\Bigr)F=\lambda F.
\]
Writing $F(r)=r^\eta e^{-r^2/2}P(r^2)$ reduces it to
\[
 sP''+(\eta+1-s)P'+\frac{\lambda-2\eta-2}{4}P=0.
\]
The resulting eigenfunctions are
\[
 r^\eta e^{-r^2/2}L_m^{(\eta)}(r^2)\sin(n\pi\phi/\omega),
\]
where $L_m^{(\eta)}$ is a generalized Laguerre polynomial. Laguerre
orthogonality and completeness in
$L^2((0,\infty),s^\eta e^{-s}\dd s)$, together with the angular sine
expansion, give~\eqref{ho:sector-eigenvalues};
see~\cite[Chapter~V]{Szego1975}. These functions belong to the Dirichlet
form domain: their Gaussian decay controls infinity, and $\eta>0$
ensures finite energy at the vertex.
\end{proof}

The inclusion $\QH\hookrightarrow L^2(\R^2)$ is compact. Indeed, local
Rellich compactness combines with the uniform tail estimate
\begin{equation}\label{ho:tail}
 \int_{\abs{x}>R_0}\abs{u}^2\dd x
 \le R_0^{-2}\int_{\R^2}\abs{x}^2\abs{u}^2\dd x.
\end{equation}
The same compactness argument applies to each closed subspace $\QH(D)$.
Hence
$\lambda_H(D)$ is attained for every nonempty open $D$, and on a
connected $D$ a nonnegative ground state is continuous and strictly
positive in the interior, by interior elliptic regularity and the
strong maximum principle. We therefore need no existence theorem for
optimal partitions on an unbounded domain to pass from the functional
result to the partition result.

\subsection{The radial map and strict stretching}\label{ho:sec-transfer}

Put
\begin{equation}\label{ho:profiles}
 c_H=2\sqrt\pi,\qquad h(r)=r^{3/2}e^{-r^2/2}.
\end{equation}
Direct differentiation gives
\begin{equation}\label{ho:radial-oscillator}
 -h''-\frac1r h'+\Bigl(r^2+\frac{\nu^2}{r^2}\Bigr)h=5h.
\end{equation}
Together with the spherical equation~\eqref{eq:S-equation}, this is the
analogue of the pair of radial model equations used for the disc.

Define $\Theta_H:(0,\infty)\to(0,\pi)$ by
\begin{equation}\label{ho:radial-map}
 \int_0^{\Theta_H(r)}\sin^2t\dd t
       =c_H\int_0^r s^2e^{-s^2}\dd s.
\end{equation}
Since
\[
 \int_0^\pi\sin^2t\dd t=\frac\pi2,
 \qquad
 \int_0^\infty s^2e^{-s^2}\dd s=\frac{\sqrt\pi}{4},
\]
this defines an increasing smooth diffeomorphism, with endpoint limits
$\Theta_H(0+)=0$ and $\Theta_H(\infty)=\pi$. Differentiation yields
\begin{equation}\label{ho:measure-match}
 \sin^2\Theta_H(r)\Theta_H'(r)
       =c_Hr^2e^{-r^2}=c_H\frac{h(r)^2}{r}.
\end{equation}
Near the origin,
\begin{equation}\label{ho:origin-asymptotic}
 \Theta_H(r)=c_H^{1/3}r(1+o(1)).
\end{equation}
Thus we again match the angular-energy measures. In place of the
Wronskian argument, we now use a differential comparison of the
normalized cumulative densities.

\begin{lemma}[Strict radial comparison]\label{ho:strict-comparison}
For every $r>0$,
\begin{equation}\label{ho:strict-K}
 K_H(r):=\frac{r\Theta_H'(r)}{\sin\Theta_H(r)}>1.
\end{equation}
\end{lemma}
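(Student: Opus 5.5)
The plan is to rewrite \eqref{ho:strict-K} as an inequality between two explicit densities and to prove it by a sign analysis of their difference along the spherical variable. By \eqref{ho:measure-match},
\[
 K_H(r)=\frac{r\Theta_H'(r)}{\sin\Theta_H(r)}
 =\frac{c_H h(r)^2}{\sin^3\Theta_H(r)},
\]
so \eqref{ho:strict-K} is equivalent to $c_H h(r)^2>\sin^3\Theta_H(r)$ for $r>0$. This mirrors \eqref{eq:stretching-goal} for the disc. I would not attempt the Wronskian argument of Proposition~\ref{prop:stretching}, because the relevant potential $V_a-5+r^2$ is no longer monotone. Instead I would parametrize by $\theta=\Theta_H(r)\in(0,\pi)$ and set
\[
 P(\theta)=\sin^3\theta,\qquad Q(\theta)=c_H h\bigl(\Theta_H^{-1}(\theta)\bigr)^2=c_Hr^3e^{-r^2}.
\]

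The key computation uses $(h^2)'/h^2=3/r-2r$ together with \eqref{ho:measure-match}, which gives
\[
 \frac{dQ}{d\theta}=\sin^2\theta\,\bigl(3-2r^2\bigr),\qquad
 \frac{dP}{d\theta}=3\sin^2\theta\cos\theta,
\]
and hence
\[
 \frac{d(Q-P)}{d\theta}=\sin^2\theta\,D(r),\qquad D(r)=3\bigl(1-\cos\Theta_H(r)\bigr)-2r^2.
\]
Both $P$ and $Q$ tend to $0$ at $\theta=0$ and at $\theta=\pi$. Therefore, once I show that $D$ is positive for small $r$, negative for large $r$, and changes sign exactly once, $Q-P$ increases strictly from $0$ and then decreases strictly back to $0$. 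It follows that $Q-P>0$ on $(0,\pi)$, which is the claim.

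The endpoint behavior of $D$ comes from asymptotics. By \eqref{ho:origin-asymptotic}, $3(1-\cos\Theta_H)\sim\tfrac32c_H^{2/3}r^2$. Since $c_H=2\sqrt\pi$, we have $\tfrac32c_H^{2/3}\approx3.49>2$, so $D>0$ near $0$. As $r\to\infty$, the bounded term $3(1-\cos\Theta_H)\le6$ is eventually dominated by $2r^2$, so $D<0$. As a consistency check at infinity, $(\pi-\Theta_H)^3/3\sim c_H re^{-r^2}/2$ shows directly that $Q\gg P$ there.

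The main obstacle is proving that $D$ has exactly one zero. I would first try to show that $D'(r_*)<0$ at every zero $r_*$ of $D$; since $D$ starts positive, this rules out a second sign change. Writing $\theta_*=\Theta_H(r_*)$, the derivative is
\[
 D'(r)=3\sin\Theta_H\,\Theta_H'-4r=\frac{3c_Hr^2e^{-r^2}}{\sin\Theta_H}-4r,
\]
so $D'(r_*)<0$ becomes $3c_Hr_*e^{-r_*^2}<4\sin\theta_*$. The zero condition $1-\cos\theta_*=2r_*^2/3$ expresses $\sin\theta_*$ through $r_*$ and, in particular, forces $r_*^2\le3$. This reduces the problem to an explicit one-variable inequality on that bounded range, to which I can add the constraint from the defining relation \eqref{ho:radial-map} linking $\theta_*$ and $r_*$ if needed. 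If this transversality check does not close directly, the fallback is to prove that $(1-\cos\Theta_H(r))/r^2$ is strictly decreasing. Via \eqref{ho:measure-match}, this is again a comparison of the derivatives of the two normalized cumulative densities $\int_0^\theta\sin^2$ and $c_H\int_0^r s^2e^{-s^2}$.
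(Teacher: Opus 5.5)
Your reduction is correct: $K_H=Q/P$, $\frac{d}{d\theta}(Q-P)=\sin^2\theta\,D(r)$ with $D=3(1-\cos\Theta_H)-2r^2$, and your endpoint signs for $D$ are right. The gap is the step you flag yourself. Nothing in the proposal shows that $D$ changes sign only once, and the transversality check does not close as described.

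Write $D'(r)=3K_H\sin^2\Theta_H/r-4r$ and use $2r_*^2=3(1-\cos\theta_*)$. At a zero $r_*$ of $D$ this gives
\[
 D'(r_*)<0\iff K_H(r_*)\bigl(1+\cos\theta_*\bigr)<2 .
\]
Eliminating $\theta_*$ with only the zero condition, this becomes $\tfrac{3\sqrt{3\pi}}{4}e^{-s}<\sqrt{1-s/3}$ with $s=r_*^2\in(0,3)$. That inequality is false for every $s\le1$: the left side is about $2.30$ at $s=0$, and at $s=1$ it is $0.847>0.816$. It is also false for $s$ near $3$.

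So the check only helps after you have separately excluded zeros of $D$ in those ranges. That needs quantitative control of $\Theta_H$, i.e.\ of the error function, from the defining relation \eqref{ho:radial-map}, and none is supplied. Your fallback, strict decrease of $(1-\cos\Theta_H)/r^2$, is equivalent to $K_H<2/(1+\cos\Theta_H)$ for all $r>0$. That upper bound on $K_H$ is stronger than the lemma itself and is left unproved.

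The missing idea is that you do not need unimodality of $Q-P$ at all, only sign information at a hypothetical bad point. Suppose $Q-P\le0$ somewhere in $(0,\pi)$. Since $Q-P$ vanishes at both ends, it then has an interior minimum $\theta_*$ with $Q\le P$ there, i.e.\ $K_H(r_*)\le1$. At this point $D(r_*)=0$. Your criterion holds automatically there, because $K_H(r_*)\le1<2/(1+\cos\theta_*)$. Hence $D$ passes from positive to negative at $r_*$, so $\theta_*$ is a strict local maximum of $Q-P$, a contradiction.

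This is exactly the paper's argument. It works with $I_H=\frac2\pi Q$ and $J_H=\frac2\pi P$ as functions of the common cumulative level $p$. It then uses $I_H\le J_H$ at a negative (or zero) interior minimum to force $(I_H-J_H)''<0$ there, contradicting the second-derivative test.
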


\begin{proof}
Introduce the normalized cumulative functions
\[
 F(r)=\frac4{\sqrt\pi}\int_0^r s^2e^{-s^2}\dd s,
 \qquad
 G(\theta)=\frac2\pi\int_0^\theta\sin^2t\dd t.
\]
Then $F(r)=G(\Theta_H(r))$. For $0<p<1$, put
$r=F^{-1}(p)$, $\theta=G^{-1}(p)$, and define
\begin{equation}\label{ho:IJ}
 I_H(p)=rF'(r)=\frac4{\sqrt\pi}r^3e^{-r^2},
 \qquad
 J_H(p)=\sin\theta G'(\theta)=\frac2\pi\sin^3\theta.
\end{equation}
Both functions are positive on $(0,1)$, extend continuously to $[0,1]$,
and vanish at the endpoints. Differentiation with respect to $p$ gives
\begin{equation}\label{ho:IJ-derivatives}
 I_H'=3-2r^2,\qquad I_H''=-\frac{4r^2}{I_H},\qquad
 J_H'=3\cos\theta,\qquad J_H''=-\frac{3\sin^2\theta}{J_H}.
\end{equation}
Suppose that $I_H-J_H$ has a negative minimum at an interior point.
At this point $I_H'=J_H'$, so $2r^2=3(1-\cos\theta)$. Since $I_H\le J_H$,
\begin{align*}
 (I_H-J_H)''
 =-\frac{6(1-\cos\theta)}{I_H}
       +\frac{3(1-\cos^2\theta)}{J_H}
 \le-\frac{3(1-\cos\theta)^2}{J_H}<0,
\end{align*}
contrary to the second-derivative test. Hence $I_H\ge J_H$.
An interior point of equality would again be a minimum and give the same
contradiction. Thus $I_H>J_H$ on $(0,1)$. Differentiating
$F(r)=G(\Theta_H(r))$ now gives
\[
 \frac{I_H(p)}{J_H(p)}
 =\frac{r\Theta_H'(r)}{\sin\Theta_H(r)}=K_H(r).\qedhere
\]
\end{proof}

\subsection{The defect identity on the full form domain}

In the coordinate convention already used for the disc, let
$\Psi_H(r,\phi)=(\Theta_H(r),\phi)$. This is a diffeomorphism from
$\R^2\setminus\{0\}$ to the sphere with its poles removed. For
$u\in\mathcal C_H:=C_c^\infty(\R^2\setminus\{0\})$, write $u=hw$ and set
\begin{equation}\label{ho:transplant}
 (\TransH u)(\Theta_H(r),\phi)
      =\frac{S(\Theta_H(r))}{h(r)}u(r,\phi).
\end{equation}
Define the shifted oscillator form by
\begin{equation}\label{ho:oscillator-shift}
 \QHo(u)=q_H[u]-5\norm{u}_2^2.
\end{equation}
Lemma~\ref{lem:transform}, applied to~\eqref{ho:radial-oscillator} with
$a=r$ and $V=r^3+\nu^2/r-5r$, gives
\begin{equation}\label{ho:oscillator-gsr}
 \QHo(u)=\int_0^{2\pi} \int_0^\infty
 h^2\Bigl[r\abs{w_r}^2+
       \frac1r\bigl(\abs{w_\phi}^2-\nu^2\abs{w}^2\bigr)\Bigr]
       \dd r\dd\phi.
\end{equation}
The spherical calculation in~\eqref{eq:sphere-factorization}, now using
$\Theta_H$ and~\eqref{ho:measure-match}, yields
\begin{equation}\label{ho:sphere-gsr}
 \QS(\TransH u)=c_H\int_0^{2\pi} \int_0^\infty
 h^2\Bigl[\frac{r}{K_H(r)^2}\abs{w_r}^2+
       \frac1r\bigl(\abs{w_\phi}^2-\nu^2\abs{w}^2\bigr)\Bigr]
       \dd r\dd\phi.
\end{equation}
Indeed, the radial coefficient is
$\sin^4\Theta_H/\Theta_H'=c_Hrh^2/K_H^2$, and the angular coefficient
is $\sin^2\Theta_H\Theta_H'=c_Hh^2/r$. Hence
\begin{equation}\label{ho:core-defect}
 \QS(\TransH u)=c_H\bigl(\QHo(u)-\DefH(u)\bigr),
\end{equation}
where
\begin{equation}\label{ho:defect}
 \DefH(u)=\int_0^{2\pi} \int_0^\infty
 rh(r)^2\bigl(1-K_H(r)^{-2}\bigr)
       \abs{\partial_r (u/h)}^2
       \dd r\dd\phi\ge0.
\end{equation}
The separate terms in~\eqref{ho:oscillator-gsr} need not be integrable
for an arbitrary $u\in\QH$ that is nonzero near the origin. We therefore
extend the combined defect identity, rather than its separate terms.

\begin{proposition}\label{ho:full-transfer}
Formula~\eqref{ho:transplant} extends to a bounded, injective linear map
\[
 \TransH:\QH\to H^1(\Sphere).
\]
The extension is given by the same formula almost everywhere and
preserves nonnegativity and segregation. For every $u\in\QH$, the
integral~\eqref{ho:defect} is finite and
\begin{equation}\label{ho:full-defect}
 \QS(\TransH u)=c_H\bigl(\QHo(u)-\DefH(u)\bigr)
       \le c_H\QHo(u).
\end{equation}
Moreover, $\DefH(u)=0$ if and only if $\partial_r(u/h)=0$ almost
everywhere on every compact annulus centered at the origin.
\end{proposition}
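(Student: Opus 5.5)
The proof follows the disc argument of Propositions~\ref{prop:full-comparison} and~\ref{prop:core-comparison}. The difference is that the defect must be carried to the limit exactly, not only as a localized lower bound.

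\textbf{Step 1: bounds on the core.} On $\mathcal C_H$, identity~\eqref{ho:core-defect} holds by the computation above, and $\DefH\ge0$ by Lemma~\ref{ho:strict-comparison}. The mass identity is obtained as in~\eqref{eq:mass-identity}, using~\eqref{ho:measure-match}:
\[
 \norm{\TransH u}_{L^2(\Sphere)}^2=c_H\int_{\R^2}\Bigl(\frac{\sin\Theta_H(\abs{x})}{\abs{x}}\Bigr)^2\abs{u}^2\dd x .
\]
The weight $\sin\Theta_H(r)/r$ is bounded. Near $0$ this follows from~\eqref{ho:origin-asymptotic}, and for $r\ge1$ it is at most $1$. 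Hence $\TransH$ is bounded $L^2\to L^2$. Next, $\QS(\TransH u)\le c_H\QHo(u)\le c_H q_H[u]$, so
\[
 \norm{\TransH u}_{H^1(\Sphere)}^2\le c_H q_H[u]+(\mu+1)C\norm{u}_2^2\le C'\norm{u}_{\QH}^2 .
\]

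\textbf{Step 2: extension.} Lemma~\ref{lem:punctured-core} with $V=\abs{x}^2$ gives density of $\mathcal C_H$ in $\QH$; this uses the equivalent description of $\QH$ via cutoff at infinity. We therefore get a unique bounded extension. Because of the $L^2$ boundedness, it is given almost everywhere by the same formula. Positivity of the multiplier $S\circ\Theta_H/h$, together with $\Psi_H$ being a diffeomorphism off null sets, gives injectivity and preservation of nonnegativity and segregation, exactly as in item~\ref{item:formula}.

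\textbf{Step 3: the exact defect identity.} This is the main obstacle, and I would handle it by rewriting the identity itself. For $u_n\in\mathcal C_H$ with $u_n\to u$ in $\QH$, identity~\eqref{ho:core-defect} gives
\[
 \DefH(u_n)=\QHo(u_n)-c_H^{-1}\QS(\TransH u_n).
\]
The right-hand side converges to $\QHo(u)-c_H^{-1}\QS(\TransH u)$ by continuity of both forms. So $\DefH(u_n)$ is a Cauchy sequence of squared seminorms. Define the weighted gradient
\[
 g_n=\sqrt{rh^2(1-K_H^{-2})}\,\partial_r(u_n/h)\in L^2(\dd r\dd\phi).
\]
Expanding $\DefH(u_n-u_m)$ by the same polarized identity, which is quadratic, shows that $(g_n)$ is Cauchy in $L^2$. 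Its limit must equal the corresponding expression for $u$. This is checked on each compact annulus, where $h$ is bounded below and $u_n/h\to u/h$ in $H^1$ by the argument of item~\ref{item:inequality}. Therefore $\DefH(u)=\lim\DefH(u_n)$ is finite, and~\eqref{ho:full-defect} follows. The point of working this way is that only the combined quantity is extended, since the separate terms in~\eqref{ho:oscillator-gsr} may diverge.

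\textbf{Step 4: the case of zero defect.} The final equivalence is immediate once the weight is known to be strictly positive. For every $r>0$ we have $rh^2(1-K_H^{-2})>0$ by Lemma~\ref{ho:strict-comparison}. This weight is continuous, hence bounded below on each compact annulus. It follows that $\DefH(u)=0$ if and only if $\partial_r(u/h)=0$ almost everywhere on every compact annulus.
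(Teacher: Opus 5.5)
Your proposal is correct and follows the paper's proof essentially step for step: core bounds come from the mass identity and~\eqref{ho:core-defect}, density of $\mathcal C_H$ from Lemma~\ref{lem:punctured-core} with $V=\abs{x}^2$, and the exact identity from applying~\eqref{ho:core-defect} to $u_n-u_m$, so that the weighted radial derivatives are Cauchy, with the limit identified on compact annuli. The final equivalence likewise follows, as in the paper, from strict positivity of the weight $rh^2(1-K_H^{-2})$ given by Lemma~\ref{ho:strict-comparison}.
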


\begin{proof}
Cutting off at infinity and mollifying shows that $C_c^\infty(\R^2)$
is dense in $\QH$. Lemma~\ref{lem:punctured-core}, with $V(x)=\abs{x}^2$,
then gives density of $\mathcal C_H$ in $\QH$.

A change of variables using~\eqref{ho:measure-match} gives
\begin{equation}\label{ho:transplant-L2}
 \norm{\TransH u}_{L^2(\Sphere)}^2
       =c_H\int_{\R^2}\frac{\sin^2\Theta_H(r)}{r^2}\abs{u(x)}^2\dd x,
       \qquad u\in\mathcal C_H.
\end{equation}
The weight is bounded near $0$ by~\eqref{ho:origin-asymptotic}, and is
at most $1$ for $r\ge1$. Put
$b_H=\sup_{r>0}\sin^2\Theta_H(r)/r^2<\infty$.
By~\eqref{ho:core-defect},
\[
 \norm{\nabla_{\Sphere}\TransH u}_2^2
 =\QS(\TransH u)+\mu\norm{\TransH u}_2^2
 \le c_Hq_H[u]+\mu c_Hb_H\norm{u}_2^2.
\]
Together with~\eqref{ho:transplant-L2}, this proves boundedness on the
core and hence the existence of a bounded extension. On compact annuli
the coordinate map and multiplier are smooth and strictly positive.
Local convergence identifies the extension with~\eqref{ho:transplant}
almost everywhere. This proves injectivity and preservation of
nonnegativity and segregation, and~\eqref{ho:transplant-L2} passes to
the limit.

For the exact identity, take $u_n\in\mathcal C_H$ with $u_n\to u$ in
$\QH$. Applying~\eqref{ho:core-defect} to $u_n-u_m$ gives
\[
 \DefH(u_n-u_m)=\QHo(u_n-u_m)
       -c_H^{-1}\QS\bigl(\TransH(u_n-u_m)\bigr)\to 0.
\]
Thus $\partial_r(u_n/h)$ is Cauchy in the weighted $L^2$ space whose
squared norm is~\eqref{ho:defect}. On compact annuli it converges to
$\partial_r(u/h)$ by ordinary $H^1$ convergence, so the weighted limit
is this derivative. Therefore $\DefH(u_n)\to\DefH(u)$, and passing to
the limit in~\eqref{ho:core-defect} proves~\eqref{ho:full-defect}.
The last assertion follows from strict positivity of the weight for
all $r>0$.
\end{proof}

\begin{lemma}[Vanishing radial derivative]\label{ho:angular-dependence}
If $u\in\QH$ and $\DefH(u)=0$, there is an $f\in H^1(\Circle)$ such that
\begin{equation}\label{ho:angular-form}
 u(r,\phi)=h(r)f(\phi)\qquad\text{a.e. in }\R^2.
\end{equation}
For this function,
\begin{equation}\label{ho:angular-energy}
 \QHo(hf)=\frac{\sqrt\pi}{4}
       \int_0^{2\pi}\Bigl(\abs{f'}^2-\frac94\abs{f}^2\Bigr)\dd\phi.
\end{equation}
\end{lemma}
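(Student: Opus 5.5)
The plan has two steps: separation of variables by the same slicing argument as in item~\ref{item:rigidity} of Proposition~\ref{prop:full-comparison}, and then a direct computation of $\QHo(hf)$ using the radial identity for $h$.

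\textbf{Separation of variables.} By Proposition~\ref{ho:full-transfer}, $\DefH(u)=0$ gives $\partial_r(u/h)=0$ almost everywhere on every compact annulus $a<r<b$. On such an annulus $h$ is smooth and bounded away from zero, so $u/h\in H^1$ there, and polar and Cartesian Sobolev norms are equivalent. Sobolev slicing on the cylinder $(a,b)\times\Circle$ then gives $u/h=f_{a,b}(\phi)$ almost everywhere. Here $f_{a,b}$ is the radial average of $u/h$ over $(a,b)$, and it lies in $H^1(\Circle)$ because its weak derivative is the average of $\partial_\phi(u/h)$. Overlapping annuli agree on their intersections, so exhausting $\R^2\setminus\{0\}$ by annuli produces a single $f\in H^1(\Circle)$ with \eqref{ho:angular-form}. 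This step is a verbatim transfer of the disc argument.

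\textbf{The energy identity.} Since $u=hf$ is in $\QH$ and separated, every term can be computed by Fubini and monotone convergence over annuli, as in the disc case:
\[
q_H[hf]=\int_0^\infty\bigl(r(h')^2+r^3h^2\bigr)\dd r\int_0^{2\pi}f^2\dd\phi
+\int_0^\infty\frac{h^2}{r}\dd r\int_0^{2\pi}\abs{f'}^2\dd\phi .
\]
Every radial integral is finite, because $h\sim r^{3/2}$ at the origin and $h$ decays like a Gaussian at infinity. Multiply \eqref{ho:radial-oscillator} by $rh$ and integrate by parts. The boundary term $rhh'$ is $O(r^3)$ at $0$ and decays at infinity, so
\[
\int_0^\infty\bigl(r(h')^2+r^3h^2-5rh^2\bigr)\dd r=-\nu^2\int_0^\infty\frac{h^2}{r}\dd r .
\]
Finally,
\[
\int_0^\infty\frac{h^2}{r}\,\dd r=\int_0^\infty r^2e^{-r^2}\,\dd r=\frac{\sqrt\pi}{4}.
\]
Substituting these into $\QHo(hf)=q_H[hf]-5\norm{hf}_2^2$ gives \eqref{ho:angular-energy}, with $\nu^2=9/4$.

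\textbf{Main obstacle.} The only delicate point is justifying the term-by-term splitting for a general $f\in H^1(\Circle)$. This needs the finiteness of $\int h^2/r$ and $\int r(h')^2$, and the fact that $\partial_r(hf)=h'f$ and $\partial_\phi(hf)=hf'$ hold weakly on annuli. I would handle it as in the disc proof: compute on annuli $a<r<b$, then let $a\downarrow0$ and $b\uparrow\infty$ by monotone convergence, since all the integrands are nonnegative.
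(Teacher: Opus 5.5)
Your proposal is correct and follows essentially the same route as the paper: the annular slicing and averaging argument from item~\ref{item:rigidity} of Proposition~\ref{prop:full-comparison} applied to $u/h$, then the radial energy identity obtained from \eqref{ho:radial-oscillator} together with $\int_0^\infty h^2\,\dd r/r=\sqrt\pi/4$. The only difference is in how the energy computation is justified. The paper first takes $f$ smooth and then uses density in $H^1(\Circle)$, whereas you split the terms on annuli and pass to the limit by monotone convergence, as in the disc argument of Section~\ref{sec:proofs}; both justifications are valid.
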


\begin{proof}
The annular averaging argument in the proof of
item~\ref{item:rigidity} of Proposition~\ref{prop:full-comparison} applies
unchanged to $u/h$ on an exhaustion of the plane by compact annuli.
It gives a single angular factor $f\in H^1(\Circle)$.
For $u=hf$, all radial integrals in the ground-state calculation are
finite, and the endpoint terms vanish. Equation~\eqref{ho:radial-oscillator}
and $\int_0^\infty h(r)^2\dd r/r=\sqrt\pi/4$ then give
\eqref{ho:angular-energy}. One can first take $f$ smooth and then use
density in $H^1(\Circle)$.
\end{proof}

\subsection{Minimality and rigidity}\label{ho:sec-proof}

\begin{proof}[Proof of Theorem~\ref{ho:main-functional}]
Let $(u_1,u_2,u_3)$ be a nonnegative, nonzero, segregated triple in
$\QH$. Proposition~\ref{ho:full-transfer} gives nonnegative, nonzero,
segregated functions $g_i=\TransH u_i\in H^1(\Sphere)$ and the identities
\begin{equation}\label{ho:triple-defect}
 \QS(g_i)=c_H\bigl(\QHo(u_i)-\DefH(u_i)\bigr)
 \le c_H\QHo(u_i),\qquad i=1,2,3.
\end{equation}
If $\RH(u_i)<5$ for every $i$, then all three $\QS(g_i)$ are strictly
negative, contradicting Lemma~\ref{lem:spherical-functional}. This proves
\eqref{ho:functional-bound}. Lemma~\ref{ho:sector-spectrum}, with
$\omega=2\pi/3$, shows that the triple~\eqref{ho:functional-equality}
attains the bound.

Suppose now that $\max_i\RH(u_i)=5$. Then $\QHo(u_i)\le0$ for every
$i$, so~\eqref{ho:triple-defect} gives $\QS(g_i)\le0$, equivalently
$\RS(g_i)\le\mu$. Spherical saturation
(Proposition~\ref{prop:saturation}) therefore yields $\QS(g_i)=0$ for
all three components. Since $\DefH(u_i)\ge0$ and $\QHo(u_i)\le0$,
the exact identities~\eqref{ho:triple-defect} force
\begin{equation}\label{ho:all-form-equalities}
 \QHo(u_i)=\DefH(u_i)=0,\qquad i=1,2,3.
\end{equation}
By Lemma~\ref{ho:angular-dependence}, there are
$f_i\in H^1(\Circle)$ such that $u_i(r,\phi)=h(r)f_i(\phi)$ almost
everywhere. Because $h(r)>0$ for $r>0$, the $f_i$ are nonnegative,
nonzero, and segregated. Equations~\eqref{ho:angular-energy}
and~\eqref{ho:all-form-equalities} give
\[
 \int_{\Circle}\abs{f_i'}^2\dd\phi
   =\frac94\int_{\Circle}\abs{f_i}^2\dd\phi,\qquad i=1,2,3.
\]
These are exactly the hypotheses of Lemma~\ref{lem:angular-rigidity}.
After a common rotation and relabeling, the positivity sets of the
$f_i$ are the three consecutive intervals of length $2\pi/3$, and each
$f_i$ is a positive multiple of the first sine function on its interval.
Multiplying by $h$ gives~\eqref{ho:functional-equality}. Conversely,
these sector ground states all have Rayleigh quotient $5$, as already
noted, completing the equality characterization.
\end{proof}

\subsection{From ground states to cells}\label{ho:sec-partitions}

Let $\mathcal P=(D_1,D_2,D_3)$ be an open partition. By compactness,
choose nonnegative ground states $u_i\in\QH(D_i)$.
Theorem~\ref{ho:main-functional} gives $\Lambda_H(\mathcal P)\ge5$,
and the Y-partition attains $5$.

If $\Lambda_H(\mathcal P)=5$, the functional equality statement
identifies the zero-extended ground states with
\eqref{ho:functional-equality}. Absorb their positive constants into
sector states $U_i$. The function $U_i$ is continuous on the plane and
positive exactly on $\sector_i^\infty(\alpha)$. Interior continuity and
positivity of $u_i$ show, just as in
Lemma~\ref{lem:form-domain-uniqueness}, that
$D_i\subset\sector_i^\infty(\alpha)$.

The same multiplication argument proves equality of the form domains.
For $\varphi\in C_c^\infty(\sector_i^\infty(\alpha))$, the quotient
$a=\varphi/U_i$, extended by zero outside the sector, lies in
$C_c^\infty(\R^2)$. Multiplication by $a$ is continuous on $\QH$:
the $H^1$ estimate is as before, and
\[
 \norm{\abs{x}az}_2\le\norm{a}_\infty\norm{\abs{x}z}_2.
\]
Approximating $U_i\in\QH(D_i)$ by functions in $C_c^\infty(D_i)$ and
multiplying by $a$ gives $\varphi\in\QH(D_i)$. Closure and the opposite
inclusion from $D_i\subset\sector_i^\infty(\alpha)$ prove
\eqref{ho:cell-form-domains}.

For a regular strong representative, the local trace argument in
Section~\ref{sec:regular-representatives} applies without change:
any additional interface point inside a sector would give a smooth
subarc there on which $U_i$ has both zero trace and a strictly positive
pointwise value. All arguments take place in a bounded neighborhood,
so the unbounded cells cause no difficulty. Thus the cells are the
sectors as open sets. This proves Theorem~\ref{ho:main-partition}.

As in the capacitary interpretation following
Lemma~\ref{lem:form-domain-uniqueness},
$\sector_i^\infty(\alpha)\setminus D_i$ has zero $H^1$-capacity.
Conversely, removing a relatively closed set of zero capacity preserves
$\QH(\sector_i^\infty(\alpha))$, by cutting off at infinity and using
boundedness of the potential on compact sets.

\subsection{Scaling, translation, and the Gaussian formulation}
\label{ho:sec-consequences}

\begin{corollary}\label{ho:scaling}
Let $a,b>0$, $c\in\R$ and $x_0\in\R^2$. For
\[
 H_{a,b,c,x_0}=-a\Delta+b\abs{x-x_0}^2+c,
\]
the minimal three-partition energy is $5\sqrt{ab}+c$.
Every regular strong minimizing representative is, up to rotation and
relabeling, the translated Y-partition with vertex $x_0$.
\end{corollary}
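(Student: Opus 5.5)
The plan is to reduce the corollary to Theorem~\ref{ho:main-partition} by an explicit unitary change of variables. First dispose of $c$ and $x_0$. Adding a constant $c$ shifts every cell eigenvalue by $c$, so it shifts the partition energy by $c$ and leaves minimizers unchanged. Translation $U_{x_0}u(x)=u(x+x_0)$ is unitary on $L^2(\R^2)$. It maps $C_c^\infty(D)$ onto $C_c^\infty(D-x_0)$ and conjugates $-a\Delta+b\abs{x-x_0}^2$ to $-a\Delta+b\abs{x}^2$. It therefore sends open partitions to open partitions, and regular strong representatives to regular strong representatives, preserving each cell eigenvalue.

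Next, for $-a\Delta+b\abs{x}^2$ use the dilation $(D_su)(x)=s\,u(sx)$, which is unitary on $L^2(\R^2)$. It satisfies
\[
 \int\abs{\nabla(D_su)}^2\dd x=s^2\int\abs{\nabla u}^2\dd x,
 \qquad
 \int\abs{x}^2\abs{D_su}^2\dd x=s^{-2}\int\abs{x}^2\abs{u}^2\dd x.
\]
Choose $s$ with $as^2=bs^{-2}$, that is, $s=(b/a)^{1/4}$. Then the quadratic form of $-a\Delta+b\abs{x}^2$ evaluated at $D_su$ equals $\sqrt{ab}\,q_H[u]$.

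The map $D_s$ carries $\QH(D)$ onto the corresponding form domain of the cell $s^{-1}D$, since it maps $C_c^\infty(D)$ onto $C_c^\infty(s^{-1}D)$ and is bounded in both directions between the relevant form norms. Hence each cell eigenvalue satisfies $\lambda(s^{-1}D)=\sqrt{ab}\,\lambda_H(D)$. Dilation preserves connectedness, disjointness, and regular strong structure, and it maps sectors with vertex $0$ to themselves. Therefore the minimal energy is $\sqrt{ab}\cdot\LH+c=5\sqrt{ab}+c$, by \eqref{ho:main-value}. The minimizers are exactly the images of the minimizers in Theorem~\ref{ho:main-partition}, namely Y-partitions with vertex $0$, translated to vertex $x_0$.

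The only point requiring care is the bijection of form domains under scaling. The maps are linear isomorphisms carrying $C_c^\infty$ of one cell to $C_c^\infty$ of the image cell, and the norms are equivalent with constants depending on $s$, so the closures correspond. This step is routine, and I do not expect a real obstacle anywhere in the argument.
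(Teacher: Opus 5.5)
Your proposal is correct and follows the paper's proof. The paper makes the single change of variables $x=x_0+(a/b)^{1/4}y$, with unitary $L^2$ normalization, to turn the operator into $\sqrt{ab}(-\Delta_y+\abs{y}^2)+c$, and then applies Theorem~\ref{ho:main-partition}. You split that same map into a translation, the dilation $D_s$ with $s=(b/a)^{1/4}$, and the constant shift, and you add the correspondence of cell form domains under the map, which the paper leaves implicit.
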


\begin{proof}
The change of variables $x=x_0+(a/b)^{1/4}y$, with the corresponding
unitary normalization in $L^2$, transforms the operator into
$\sqrt{ab}(-\Delta_y+\abs{y}^2)+c$. Apply
Theorem~\ref{ho:main-partition}.
\end{proof}
In particular, the value for $-\Delta+\omega^2\abs{x}^2$, with $\omega>0$,
is $5\omega$.

For the Gaussian formulation, let
$\dd\gamma=\pi^{-1}e^{-\abs{x}^2}\dd x$, and let $H^1(\gamma)$ be the
completion of $C_c^\infty(\R^2)$ for the squared norm
$\int(\abs{\nabla f}^2+\abs{f}^2)\dd\gamma$. The map
\[
 (Uf)(x)=\pi^{-1/2}e^{-\abs{x}^2/2}f(x)
\]
is unitary from $L^2(\gamma)$ to $L^2(\R^2)$ and preserves supports.
Integration by parts on the core, followed by completion, gives
\begin{equation}\label{ho:gaussian-transform}
 q_H[Uf]=\int_{\R^2}\abs{\nabla f}^2\dd\gamma
              +2\int_{\R^2}\abs{f}^2\dd\gamma.
\end{equation}
The Gaussian Dirichlet form on a cell $D$ is defined by closing
$C_c^\infty(D)$ in $H^1(\gamma)$. Multiplication by the positive smooth
factor in $U$ maps these cores onto the corresponding oscillator cores;
\eqref{ho:gaussian-transform} therefore identifies their form domains.
It follows that the minimal three-partition energy for
$\int\abs{\nabla f}^2\dd\gamma$, equivalently for $-\Delta+2x\cdot\nabla$,
is $3$, with the same minimizing sectors and the same form-domain
uniqueness convention. For the standard Gaussian density
$(2\pi)^{-1}e^{-\abs{x}^2/2}$ and the operator $-\Delta+x\cdot\nabla$, the
value is $3/2$, by dilation. These are spectral partition statements;
no Gaussian perimeter functional is involved.

\subsection{The common radial-profile principle}\label{ho:sec-general-profile}

Both applications are instances of the same conditional comparison.
Let $0<b\le\infty$ and $\Omega_b=\{x\in\R^2:\abs{x}<b\}$, with
$\Omega_\infty=\R^2$. Consider a closed, semibounded radial Dirichlet Schr\"odinger form
\[
 q_V[u]=\int_{\Omega_b}\bigl(\abs{\nabla u}^2+V(\abs{x})\abs{u}^2\bigr)\dd x.
\]
Suppose it has a positive radial factor $\rho$ satisfying
\begin{equation}\label{ho:general-profile}
 -\rho''-\frac1r\rho'+\Bigl(V(r)+\frac{9}{4r^2}\Bigr)\rho
       =\lambda_*\rho,\qquad
 0<\int_0^b\frac{\rho(r)^2}{r}\dd r<\infty.
\end{equation}
Define $\Theta_\rho$ and $c_\rho$ by
\begin{equation}\label{ho:general-map}
 \int_0^{\Theta_\rho(r)}\sin^2t\dd t
   =c_\rho\int_0^r\frac{\rho(s)^2}{s}\dd s,
 \qquad
 c_\rho=\frac{\pi/2}{\int_0^b\rho(s)^2\dd s/s},
\end{equation}
and put
\[
 (T_\rho u)(\Theta_\rho(r),\phi)
   =\frac{S(\Theta_\rho(r))}{\rho(r)}u(r,\phi),
 \qquad
 K_\rho(r)=\frac{r\Theta_\rho'(r)}{\sin\Theta_\rho(r)}.
\]
For smooth functions compactly supported away from the radial endpoints,
Lemma~\ref{lem:transform} gives
\begin{equation}\label{ho:general-identity}
 \QS(T_\rho u)=c_\rho\Big(q_V[u]-\lambda_*\norm{u}_2^2
 -\int_0^{2\pi} \int_0^b
 r\rho^2(1-K_\rho^{-2})\abs{\partial_r(u/\rho)}^2\dd r\dd\phi\Big).
\end{equation}
Assume that the punctured smooth core is dense in the source form
domain, that $K_\rho\ge1$, and that this transplantation extends
continuously from that form domain into $H^1(\Sphere)$, with the same
positive-multiplier formula almost everywhere. Then
Lemma~\ref{lem:spherical-functional}, applied to the transplanted
functions, gives the lower bound $\lambda_*$ for every three-partition.
If the equal-sector ground states belong to
the corresponding cell form domains and have eigenvalue $\lambda_*$,
the bound is sharp.

For the disc, $(b,V,\rho,\lambda_*)=(1,0,R,L)$; for the oscillator,
they are $(\infty,r^2,h,5)$. The endpoint, density, and strict
radial-comparison hypotheses have been checked above in both cases.
They must be checked anew for any other radial potential.

\section*{Acknowledgments}

I acknowledge the use of AI tools for mathematical exploration and assistance
with preparing this manuscript. I thank Bernard Helffer for reading an early
version and for suggesting several improvements, including the suggestion
to try the same method also for the harmonic oscillator.

\bibliographystyle{amsplain}
\bibliography{S-radial}

\end{document}